\newtheorem{theorem}{Theorem}[section]
\newtheorem{lemma}[theorem]{Lemma}
\newtheorem{proposition}[theorem]{Proposition}
\newtheorem{claim}{Claim}[theorem]
\theoremstyle{definition}
\newtheorem{definition}[theorem]{Definition}
\newtheorem{problem}[theorem]{Problem}
\newtheorem{remark}[theorem]{Remark}
\newtheorem{example}[theorem]{Example}
\numberwithin{equation}{section}
\newcommand{\set}[1]{\left\{#1\right\}}
\newcommand{\inte}{{\mathop{\mathrm{int}\,}}}
\newcommand{\cl}{{\mathop{\mathrm{cl}\,}}}
\newcommand{\relbd}{{\mathop{\mathrm{relbd}\,}}}
\newcommand{\relint}{{\mathop{\mathrm{relint}\,}}}
\newcommand{\pos}{{\mathop{\mathrm{pos}\,}}}
\newcommand{\aff}{{\mathop{\mathrm{aff}\,}}}
\newcommand{\conv}{{\mathop{\mathrm{conv}\,}}}
\newcommand{\cone}{{\mathop{\mathrm{cone}\,}}}
\newcommand{\dime}{{\mathop{\mathrm{dim}\,}}}
\newcommand{\supp}{{\mathop{\mathrm{supp}\,}}}
\newcommand{\size}{{\mathop{\mathrm{size}\,}}}
\newcommand{\siz}{{\mathop{\mathrm{sz}\,}}}
\newcommand{\len}{{\mathop{\lambda_1}}}
\newcommand{\area}{{\mathop{\lambda_2}}}
\newcommand{\voln}{{\mathop{\lambda_n}}}
\newcommand{\volt}{{\mathop{\lambda_3}}}
\newcommand{\pa}{{\partial}}
\newcommand{\Real}{\mathbb{R}}
\newcommand{\ee}{{\varepsilon}}
\newcommand{\al}{{\alpha}}
\newcommand{\be}{{\beta}}
\newcommand{\de}{{\delta}}
\newcommand{\ga}{{\gamma}}
\newcommand{\si}{{\sigma}}
\newcommand{\la}{{\lambda}}
\newcommand{\te}{{\theta}}
\newcommand{\Si}{{\Sigma}}
\newcommand{\calA}{{\mathcal A}}
\newcommand{\F}{{\mathcal A}}
\newcommand{\G}{{\mathcal B}}
\newcommand{\cK}{{\mathcal K}}
\newcommand{\cH}{{\mathcal H}}
\newcommand{\cL}{{\mathcal L}}
\newcommand{\cT}{{\mathcal T}}
\newcommand{\cF}{{\mathcal F}}
\newcommand{\tG}{{ G_{0}}}
\newcommand{\tF}{{ F_0}}
\newcommand{\gab}{{\jcov{A,B}}}
\newcommand{\jcov}[1]{g_{#1}}
\newcommand{\gabp}{{\jcov{A',B'}}}
\newcommand{\fz}{{G^0}}
\newcommand{\Sz}{{\Sigma}}
\newcommand{\Sp}{\Sigma_+}
\newcommand{\Sm}{\Sigma_-}
\newcommand{\tee}{{p_\ee}}
\newcommand{\un}{{1}}
\newcommand{\Pg}{{P'_{G^0}}}
\newcommand{\ang}[2]{{\{#1\leq\theta\leq#2\}}}
\newcommand{\tdue}{{ y}}
\newcommand{\se}{{ S}}
\newcommand{\Az}{{A^0}}
\begin{document}
\title[The covariogram  determines convex $3$-polytopes]{The covariogram
determines  three-dimensional convex polytopes}
\author{Gabriele Bianchi}
\address{Dipartimento di Matematica, Universit\`a di Firenze, 
Viale Morgagni 67/A, Firenze, Italy I-50134}
\email{gabriele.bianchi@unifi.it}
\subjclass[2000]{Primary 60D05; Secondary 52A22, 42B10, 52B10, 52A38}
\keywords{Autocorrelation, covariogram, cross covariogram, cut-and-project scheme, geometric tomography, phase retrieval, quasicrystal, set covariance}
\date{\today}

\begin{abstract}The cross covariogram $g_{K,L}$ of two convex sets
$K$, $L\subset\Real^n$ is the function which associates to each $x\in
\Real^n$ the volume of the intersection of $K$ with $L+x$. The problem of determining the 
sets from their covariogram is relevant in stochastic geometry, in probability and it is equivalent to a particular case of the phase retrieval problem in Fourier analysis. It is also
relevant for the inverse problem of determining the atomic structure of a quasicrystal from its X-ray
diffraction image.
The two main results of this
paper are that $g_{K,K}$ determines three-dimensional convex polytopes $K$ and that $g_{K,L}$ determines both  $K$ and $L$ when $K$ and $L$ are convex polyhedral cones satisfying certain assumptions. 
These results settle a conjecture of G.~Matheron in the class 
of convex polytopes.
Further results regard the known counterexamples in dimension 
$n\geq4$.
We also introduce and study the
notion of synisothetic polytopes.
This concept is related to the rearrangement of the
faces of a convex polytope.
\end{abstract}
\maketitle

\section{Introduction}
Let $K$ be a convex body in $\Real^n$. The \emph{covariogram} $g_K$
of $K$ is the function
\[
g_K(x)= \voln(K\cap(K+x)),
\]
where $x\in\Real^n$ and $\voln$ denotes $n$-dimensional Lebesgue measure.
This functional, which was introduced by Matheron in his book
\cite{M} on random sets, is also sometimes called the \emph{set
covariance} and it coincides with the  \emph{autocorrelation} of the characteristic function of $K$:
\begin{equation}\label{convoluzione}
g_K=1_K\ast 1_{(-K)}.
\end{equation}
The covariogram $g_K$ is clearly unchanged by a translation or a reflection 
of $K$. (The term \emph{reflection} will always mean reflection 
in a point.) Matheron~\cite{M2} in 1986 asked the following
question and conjectured a positive answer for the case $n=2$.

\smallskip

\textbf{Covariogram problem.} \emph{Does the covariogram determine a
convex body, among all convex bodies, up to translations and
reflections?}

\smallskip

The conjecture regarding $n=2$ has been completely settled only very recently, by Averkov and Bianchi~\cite{AB2}. 

Matheron~\cite[p.~86]{M} observed  that, for  $u\in S^{n-1}$ and for
all $r>0$, the derivatives $({\pa}g_K/{\pa r})(ru)$ give the
distribution of the lengths of the chords of $K$ parallel to $u$.
Such information is common in stereology, statistical shape
recognition and image analysis, when properties of an unknown body
are to be inferred from chord length measurements; see~\cite{S},
\cite{CB} and~\cite{Se}, for example. 
Blaschke asked whether the distribution of
the lengths of chords (in all directions) of a convex body characterizes the body, but Mallows and Clark~\cite{MC}
proved that this is false even for convex polygons.  In fact (see~\cite{N1}) the covariogram problem is equivalent to the problem of
determining a convex body from all its separate chord length
distributions, one for each direction $u\in S^{n-1}$.

The covariogram problem appears in other contexts. 
Adler and Pyke~\cite{AP1} asked in 1991 whether the
distribution of the difference $X-Y$ of independent random variables
$X$ and $Y$ uniformly distributed over $K$ determines $K$, up to
translations and reflections. Since the convolution in \eqref{convoluzione} 
is, up to a multiplicative factor, the
probability density of $X-Y$, this problem is equivalent to the
covariogram problem. The same authors~\cite{AP2} find the
covariogram problem relevant also in the study of scanning Brownian
processes and of the equivalence of measures induced by these
processes for different base sets.

The covariogram problem is a special case of the {\em
phase retrieval problem} in Fourier analysis. This problem involves determining a function $f$ from the modulus of its Fourier transform $\widehat{f}$. 
Since phase and amplitude are in general independent of each other,
in order to solve the phase retrieval problem one must use
additional information constraining the admissible solutions $f$;
see~\cite{KST} and~\cite{Sanz}.
Taking Fourier transforms in \eqref{convoluzione} and using the
relation $\widehat{1_{-K}}=\overline{\widehat{1_K}}$, we obtain
\begin{equation} \label{e:fourier}
\widehat{g_K}=\widehat{1_K}\widehat{1_{-K}} =|\widehat{1_K}|^2.
\end{equation}
Thus the phase retrieval problem, restricted to the class of
characteristic functions of convex bodies, reduces to the
covariogram problem.

In X-ray crystallography the atomic structure of a crystal is to be
found from diffraction images. As Rosenblatt~\cite{Ros}
explains, ``Here the phase retrieval problem arises because the
modulus of a Fourier transform is all that can usually be measured
after diffraction occurs.'' 
A convenient way of describing many important examples of quasicrystals is via the ``cut and project scheme''; see~\cite{BaakeMoody}. Here to the atomic structure, represented by a discrete set $S$ contained in a space $E$, is associated a lattice $N$ in a higher dimensional space $E\times E'$ and a ``window'' $W\subset E'$ (which in many cases is  a convex set). Then  $S$ coincides with the projection on $E$ of the points of the lattice $N$ which belong to $W \times E$. In many examples the lattice $N$ can be determined by the diffraction image. To determine $S$ it is however necessary to know $W$: the covariogram problem enters at this point, since the covariogram of $W$ can be obtained by the diffraction image; see~\cite{BaakeGrimm}.

In convex geometry the covariogram  appears in several contexts. 
For instance it has a central role in the proof of the Rogers-Shephard inequality~\cite[Th.~7.3.1]{Sc}. 
Moreover the level sets of $g_K$, which are convex and are called 
\emph{convolution bodies}, have been studied~\cite{Tso} and are 
related to the projection body of $K$.
A discrete version of the covariogram problem has been considered~\cite{GGZ}. 
In~\cite{GZ}  the covariogram problem was transformed to a question for the \emph{radial mean bodies}. 

The first contribution to Matheron's question was made  by Nagel~\cite{N1} in 1993, who confirmed Matheron's  conjecture for all convex polygons. Other partial results towards the complete confirmation of this conjecture in the plane have been proved by Schmitt~\cite{S}, Bianchi, Segala and Vol\v{c}i\v{c}~\cite{BSV}, Bianchi~\cite{B2} and Averkov and Bianchi~\cite{AB}.

In general, Matheron's conjecture is false, as the author~\cite{B2}
proved by finding counterexamples in $\Real^n$, for any $n\geq 4$. Indeed,
the covariogram of the Cartesian product of convex sets  $K\subset
\Real^k$ and $L\subset \Real^m$ is the product of the covariograms
of $K$ and $L$. Thus $K\times L$ and $K\times(-L)$ have equal
covariograms. However, if neither $K$ nor $L$ is centrally symmetric,
then $K\times L$ cannot be obtained from $K\times (-L)$ through a
translation or a reflection. To satisfy these requirements the
dimension of both sets must be at least two and thus the dimension
of the counterexamples is at least four. 
We note that these counterexamples can be polytopes but
not $C^1$ bodies.

For $n$-dimensional convex polytopes $P$, Goodey, Schneider and Weil
\cite{GSW2} prove that if $P$ is simplicial and  
$P$ and $-P$ are in general relative position, 
the covariogram
determines $P$. Up till now this was the only positive result
available in dimension $n\geq 3$ (beside the positive result for all
centrally symmetric convex bodies).

In  $\Real^3$ the ``Cartesian product'' construction does not apply
because any one-dimensional convex set is centrally symmetric. For
the class of three-dimensional convex polytopes we are able to confirm
Matheron's conjecture. This answer, together with the
counterexamples for $n\geq4$, completely settles the covariogram
problem for convex polytopes.

\begin{theorem}\label{teorema0}Let $P\subset\Real^3$ be  a convex polytope
with non-empty interior. Then $g_P$ determines $P$, in the class of
convex bodies in $\Real ^3$, up to translations and reflections.
\end{theorem}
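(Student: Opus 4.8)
The plan is to exploit the well-known fact (due to Matheron, recalled in the paper) that the covariogram determines, for each direction $u\in S^{n-1}$, the full chord-length distribution of $P$ in direction $u$; equivalently, the function $r\mapsto g_P(ru)$ is known for each $u$. From this one recovers several pieces of ``boundary data'': first, the support of $g_P$ is the difference body $DP=P+(-P)$, so $DP$ is determined; second, the behaviour of $g_P$ near the boundary of its support encodes, for each outer normal direction, contributions from pairs of parallel faces/edges/vertices of $P$ and $-P$ that realize the width. The first main step I would take is therefore to extract from $g_P$ the geometry of the \emph{pairs of parallel faces} of $P$: for a generic facet normal $u$, differentiating $g_P$ appropriately across $\partial(DP)$ near the point of $DP$ in direction $u$ gives the area of the facet of $P$ with normal $u$ together with the area of the facet with normal $-u$ (from $-P$), and more refined local expansions give the areas of lower-dimensional faces in those directions. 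So the \emph{set} of facet normals of $P$ is known, as is, for each such normal $u$, the pair of facet areas $\{A_u, A_{-u}\}$ of $P$ in the directions $u$ and $-u$ (unordered, because $g_P=g_{-P}$).

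The heart of the argument, and the main obstacle, is precisely this ambiguity: $g_P$ cannot distinguish $P$ from $-P$, and a priori it might fail to distinguish $P$ from some other polytope $P'$ obtained by ``reassembling'' the faces of $P$ and $-P$ in a mismatched way — this is the motivation for the synisothetic polytopes mentioned in the abstract. The strategy is to show that in $\Real^3$ no such nontrivial reassembly exists for polytopes with nonempty interior. Concretely, I would argue as follows. Two-dimensional sections and projections of $g_P$ in a generic plane direction are covariograms of the corresponding two-dimensional shadow/section information, and Nagel's planar theorem (together with the later planar results) pins down a great deal of $2$-D data. More importantly, one uses the finer local structure of $g_P$ near lower-dimensional faces of its support $DP$: the edges of $DP$ come from (edge of $P$)+(vertex of $-P$), (vertex of $P$)+(edge of $-P$), or (edge of $P$)+(edge of $-P$) with parallel edge directions; analyzing the singularity of $g_P$ along each such edge recovers edge lengths, edge directions, and the dihedral/combinatorial data attaching them. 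Assembling all this, one reconstructs the \emph{full face lattice of $P$ together with all metric data of its faces}, up to the global ambiguity $P\leftrightarrow -P$.

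The final step is a rigidity statement: a $3$-polytope is determined by the list of its facets (as planar polygons with their metric and the identification of which edges are glued to which) — this is essentially an Alexandrov/Cauchy-type uniqueness, or more elementarily just the observation that once we know each $2$-face of $P$ exactly, know the adjacency of $2$-faces along edges, and know the edge lengths match up, the polytope is rigid because gluing polygons edge-to-edge along a prescribed pattern, with the combinatorial type of a $3$-polytope fixed, leaves no freedom once orientations are pinned. The only genuine freedom left after all the covariogram data is extracted is the choice between $P$ and $-P$, which is exactly the allowed ambiguity in the statement. Thus $g_P$ determines $P$ up to translation and reflection.

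I expect the main obstacle to be rigorously ruling out ``synisothetic'' alternatives — i.e., proving that the partial combinatorial/metric data that $g_P$ genuinely provides (which is less than the full face lattice a priori: face areas come in unordered pairs, and the gluing pattern is not handed to us directly) is nonetheless enough in dimension $3$ to force uniqueness. In higher dimensions this fails (the Cartesian-product counterexamples), so the proof must use something special about $\Real^3$, most plausibly the fact that $1$-dimensional faces are centrally symmetric, so the mismatch cannot be ``localized'' in a product structure; converting this heuristic into a clean combinatorial dichotomy on how faces of $P$ and $-P$ can pair up along $\partial(DP)$ is where the real work lies.
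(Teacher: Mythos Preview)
Your outline correctly identifies some of the ingredients --- in particular that $DP$ is determined, that the local behaviour of $g_P$ near $\partial(DP)$ encodes face/cone data, and that the central difficulty is ruling out ``synisothetic'' alternatives. But the proposal has a genuine gap precisely at the point you yourself flag as ``where the real work lies'', and the rigidity mechanism you suggest does not close it.

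The crux is this: from $g_P$ one can indeed prove that $(P,-P)$ and $(P',-P')$ are synisothetic (this is the content of the paper's Theorem~\ref{teorema_lmr}, and already requires substantial work: a distributional analysis of $\partial^2 g_P/\partial w^2$ yielding both $g_{F_0}+g_{G_0}$ and the \emph{cross} covariogram $g_{F_0,G_0}$ for antipodal faces, combined with a complete solution of the cross covariogram problem for planar polygons and for three-dimensional cones). But synisothesis alone does \emph{not} determine $P$ up to translation and reflection: Example~\ref{es_sinisotetici} in the paper exhibits polytopes $P,P'$ with $(P,-P)$ synisothetic to $(P',-P')$ yet $P'$ not a translate or reflection of $P$. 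So your claim to ``reconstruct the full face lattice of $P$ together with all metric data of its faces, up to the global ambiguity $P\leftrightarrow -P$'' cannot be achieved from the synisothetic data alone, and hence your Cauchy/Alexandrov rigidity step has nothing to act on: you never actually have the hypothesis (each $2$-face known exactly, with adjacencies) that such a rigidity theorem would require.

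What the paper does instead, after establishing synisothesis, is a delicate topological/combinatorial analysis of $\partial P\cap\partial P'$ (Section~\ref{sec_syn}): one chooses a suitable non-neutral face $G^0$ and builds a maximal ``positive'' region $\Sigma_+\subset\partial P\cap\partial P'_{G^0}$ and an antipodal region $\Sigma_-$, proving that their boundaries match up in a controlled way. Then (Section~\ref{sec_mainproof}) one locates a vertex $q$ where the two polytopes genuinely differ in a half-space, and computes $g_P-g_{P'}$ locally near a carefully chosen boundary point $z$; the difference reduces to a cross-covariogram identity for cones, $g_{A,C}=g_{A,D}$ with $C\neq D$, which is shown to be impossible via Lemma~\ref{coni_finale_shortened}. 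This last step is what actually uses the covariogram (beyond synisothesis) to separate $P$ from a synisothetic impostor $P'$ --- and there is no analogue of it in your outline.
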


Given a face $F$ of a convex polytope $P\subset\Real^3$,
$\cone(P,F)$  denotes the support cone to $P$ at $F$ (see the next section for all
unexplained definitions).  If $w\in S^2$ we denote by $P_w$ the unique proper  face of $P$ such that the relative interior of its normal cone contains  $w$. 
A major step in the proof of Theorem~\ref{teorema0} is the following result.

\begin{theorem}\label{teorema_lmr}Let $P$ and $P'$ be convex 
polytopes in $\Real^3$ with non-empty interior such that $g_P=g_{P'}$.
If $w\in S^2$ then, possibly after a translation or a reflection of
$P'$ that may depend on $w$,
\begin{align}
P_w&=P'_w;\label{t_lmr_facce}\\
\cone(P,P_w)&=\cone(P',P'_w).\label{t_lmr_coni}
\end{align}
\end{theorem}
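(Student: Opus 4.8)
The plan is to extract the local geometry of $P$ at each face from the covariogram by studying the behaviour of $g_P$ near suitable points of the boundary of its support $DP=P+(-P)$. The key classical fact (going back to Matheron and exploited in all prior work) is that $g_P$ is supported on $DP$, and that the asymptotic behaviour of $g_P$ as one approaches a boundary point $z\in\partial(DP)$ from inside encodes the local structure of $P$ near the pair of faces that "produce" $z$. Concretely, if $w\in S^2$ and $z$ is the point of $\partial(DP)$ with outer normal $w$, then $z=P_w-(P')_{-w}$ where $(P_w$ and $P_{-w}$ are the two faces of $P$ with normals $w$ and $-w$; and the way $g_P(z-tw)$ decays as $t\to0^+$ reflects how $P\cap(P+z-tw)$ shrinks, which is governed by $\cone(P,P_w)$ and $\cone(P,P_{-w})$.

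First I would set up, for a fixed $w\in S^2$, the local analysis of $g_P$ near the corresponding boundary point of $DP$, introducing the "local covariogram" obtained by intersecting the two support cones $\cone(P,P_w)$ and $\cone(P,P_{-w})$ (suitably translated) and computing the volume of the intersection as a function of the displacement. This is a homogeneous or piecewise-polynomial quantity whose leading term is a concrete function of the two cones and of the relative position of the faces $P_w$, $P_{-w}$. Since $g_P=g_{P'}$, the corresponding local quantities for $P$ and $P'$ agree near the matching boundary point (after a translation bringing the two boundary points together, and possibly a reflection). Second, I would show that this agreement of local covariograms forces, for each $w$, the equality in \eqref{t_lmr_facce} and \eqref{t_lmr_coni}: the face $P_w$ and the cone $\cone(P,P_w)$ are determined by the germ at $z$ of $g_P$ up to translation/reflection. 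The natural route is a case analysis on $\dim P_w\in\{0,1,2\}$: when $\dim P_w=2$ (a facet), $P_w$ is centrally symmetric issues do not arise, and $\cone(P,P_w)$ is a halfspace whose bounding plane and the shape of the facet are read off from the first-order behaviour; when $\dim P_w=1$ (an edge) or $\dim P_w=0$ (a vertex), one uses finer terms of the expansion, together with the simultaneous information coming from nearby directions $w'$, to reconstruct the (two-dimensional, resp. three-dimensional) cone.

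The main obstacle I expect is the genuine ambiguity that the covariogram cannot resolve: at a given $z\in\partial(DP)$ the datum involves the pair $\{\cone(P,P_w),\cone(P,P_{-w})\}$ (really their Minkowski-type interaction), and a priori swapping the roles of $P$ and $-P$, or redistributing the "defect" between the two faces, could produce the same local covariogram. Disentangling $\cone(P,P_w)$ from $\cone(P,P_{-w})$ — i.e. going from knowledge of how the product/intersection of two cones behaves to knowledge of each cone separately — is exactly where the translation-or-reflection freedom (allowed to depend on $w$ in the statement) must be used, and where one must rule out spurious alternative pairs. I would handle this by a rigidity argument for intersections of polyhedral cones in $\Real^3$: the full second-order jet of the local covariogram (a finite list of polygonal areas, one for each "slab" direction) determines the unordered pair of cones, and the low dimension $n=3$ is what makes the bookkeeping finite and the reconstruction possible. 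Carrying this rigidity lemma through all the combinatorial types of cone pairs, and checking that the only ambiguity left is the global one absorbed by the translation/reflection in the statement, is the technical heart of the argument; the rest is assembling these per-$w$ conclusions.
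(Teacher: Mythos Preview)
Your overall framework is correct: the proof does proceed by analysing the behaviour of $g_P$ near boundary points of $DP$, followed by a case split on $\dim P_w$. But your proposal underestimates the depth of the ``rigidity lemma'' you gesture at, and misses the specific tools the paper relies on.

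For \eqref{t_lmr_facce} when $P_w$ and $P_{-w}$ are both facets, the local asymptotics give you (via a distributional second derivative and Lebesgue decomposition) the two quantities $g_{\tF}+g_{\tG}$ and $g_{\tF,\tG}$, where $\tF,\tG$ are the projections of $P_w,P_{-w}$ onto $w^\perp$. A Fourier argument then shows $\{g_{\tF},g_{\tG}\}=\{g_{\tF'},g_{\tG'}\}$, and the planar covariogram theorem gives each of $\tF,\tG$ up to translation \emph{or} reflection, independently. The paper stresses (Remark~\ref{rem_facce}) that ruling out the possibility $\tF'=-\tF\neq\tF$ while $\tG'=-\tG\neq\tG$ is a genuine obstruction that cannot be handled by finer local expansions: it requires the full solution of the \emph{cross covariogram problem for convex polygons} (Theorem~\ref{cov_congiunto_poligoni}), a result proved in a separate paper with its own exceptional cases to check against.

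For \eqref{t_lmr_coni} when $P_w$ is a vertex, you face the cross covariogram problem for a pair of pointed polyhedral cones in $\Real^3$. The paper's Proposition~\ref{proposizione_coni} solves this not via a second-order jet but by two substantial ingredients: analysing where $g_{A,B}$ fails to be $C^3$ (which encodes which vertices of the cone sections are joined by segments), and showing that certain mixed second derivatives of $g_{A,B}$ yield the $X$-rays of the cones, hence the $-1$-chord functions of their planar sections, which are then fed into a determination lemma from geometric tomography. Moreover, the hypotheses needed to apply Proposition~\ref{proposizione_coni} are themselves verified using the already-established \eqref{t_lmr_facce}; the two parts of the theorem are not independent.

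In short, your plan identifies the right structure and the right obstacle, but ``a rigidity argument for intersections of polyhedral cones'' is not something you can carry out by bookkeeping on jets. The actual proof imports two external determination theorems (cross covariogram of planar polygons; cones from $-1$-chord functions) and develops a third (cross covariogram of $3$-cones via $C^3$-singularities and $X$-rays). Without naming and proving these, your sketch does not close.
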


Let $P$ and $Q$ be convex polytopes in $\Real^n$, let $F$ be a
proper face of $P$, and let $G$ be a proper face of $Q$. We say that
$F$ and $G$ are {\em isothetic} if $G$ is a translate of $F$ and
\[
\cone(P,F)=\cone(Q,G).
\]
Given convex polytopes $P_1$, $P_2$, $Q_1$ and $Q_2$ in $\Real^n$ we
say that $(P_1,P_2)$ and $(Q_1,Q_2)$ are \emph{synisothetic} if
given any proper face $F$ of $P_j$, for some $j=1,2$, there is a
proper face $G$  of $Q_k$, for some $k=1,2$ (and conversely), such
that $F$ and $G$ are isothetic.

The term synisothetic was suggested by P.~McMullen. The previous
theorem can be rephrased in these terms:  If $g_P=g_{P'}$, then
$(P,-P)$ and $(P',-P')$ are synisothetic. 

In order to prove Theorem~\ref{teorema_lmr} we  investigate  two
related problems. The presence of parallel facets of $P$  causes
difficulties (eliminated by the special assumption in~\cite{GSW2}
that $P$ and $-P$ are in general relative position). To deal with this,
A.~Vol\v{c}i\v{c} and R.~J.~Gardner posed a generalization of Matheron's
question we call the \emph{cross covariogram problem}.  To explain
this, some terminology is needed. Given two convex sets $K$ and $L$
in $\Real ^n$, the {\em cross covariogram} $\jcov{K,L}$ is the
function
\[
\jcov{K,L}(x)=\voln(K\cap (L+x))
\]
where $x\in\Real^n$ is such that $\voln(K\cap (L+x))$ is finite. 

Let $K$, $L$, $K'$ and $L'$ be  convex sets in $\Real^n$. 
We call $(K,L)$ and $(K',L')$ \emph{trivial associates} if one pair is obtained by the other one via a combination of the operations which leave the cross covariogram unchanged; see Section~\ref{definitions} for the precise definition.

\begin{problem}[Cross covariogram problem for polygons]\label{jcpf}\emph{Does the cross covariogram of the convex polygons $K$ and $L$ determine the pair $(K,L)$, among all pairs of convex bodies, up to trivial associates?}
\end{problem}

One connection between covariogram and cross covariogram lies in the observation that if $F$ and $G$ denote parallel facets of a convex polytope $P\subset\Real^n$, 
the ``singular part'' of some second order distributional derivative of $g_P$ provides both $\jcov{F,G_0}$ and $g_F+g_\tG$, where $\tG$ is the orthogonal projection of $G$ on the hyperplane which contains $F$. We prove that the information given by these two functions can be decoupled and provides both $g_F$ and $g_\tG$, up to an exchange between them. 
When $n=3$, in view of the confirmation of Matheron's conjecture in the plane,  $g_P$ provides both $F$ and $\tG$, up to an exchange between them and up to translations or reflections of $F$ and of $\tG$. 
However, all this is not sufficient for our purpose and a detailed study of Problem~\ref{jcpf} is needed; see Remarks~\ref{rem_facce} and~\ref{rem_facce_four} for further comments.

The answer to Problem~\ref{jcpf} is negative as Examples~\ref{parall} and~\ref{parall_due} show (see Figures~\ref{fig_parall} and~\ref{fig_parall_due}). For each choice of some real parameters there exist four  pairs of
parallelograms $(\cK_1,\cL_1),\dots,(\cK_4,\cL_4)$ such that, for $i=1,3$, $g_{\cK_i,\cL_i}=g_{\cK_{i+1},\cL_{i+1}}$ but $(\cK_i,\cL_i)$ is not a trivial associate of $(\cK_{i+1},\cL_{i+1})$. 
Problem~\ref{jcpf} is completely solved by Bianchi~\cite{B4}, which proves that, up to an affine transformation, the previous counterexamples are the only ones.
\begin{theorem}[\cite{B4}]\label{cov_congiunto_poligoni}
Let $K, L$ be convex polygons and $K',L'$ be planar convex bodies
with $g_{K,L}=g_{K',L'}$. Assume that there is no affine transformation $\cT$ and no different  indices $i,j$, with either $i,j\in\{1,2\}$ or  $i,j\in\{3,4\}$, such that $(\cT K,\cT L)$ and $(\cT K',\cT L')$ are trivial associates of $(\cK_i,\cL_i)$ and $(\cK_j,\cL_j)$, respectively.
Then  $(K,L)$ is a trivial associate of $(K',L')$.
\end{theorem}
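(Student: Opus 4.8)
\emph{Normalising by affine maps.} An affine transformation $\cT$ multiplies the cross covariogram by $|\det\cT|$ and turns $g_{K,L}=g_{K',L'}$ into $g_{\cT K,\cT L}=g_{\cT K',\cT L'}$, and it carries trivial associates to trivial associates, so I may apply affine maps freely throughout. The first step is to recover the \emph{difference region}
\[
D:=K\oplus(-L)=\cl\{x:g_{K,L}(x)>0\}.
\]
Since $K,L$ are polygons, $D$ is a convex polygon; but a Minkowski sum is a polytope only when both summands are, so $g_{K',L'}=g_{K,L}$ already forces $K',L'$ to be convex polygons with $K'\oplus(-L')=D$. Because a convex polygon is determined up to translation by its cyclically ordered list of edge vectors, the problem reduces to recovering these edge vectors for $K$ (equivalently for $L$, as $K\oplus(-L)=D$ is fixed).

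\emph{Recovering the unordered edge splitting.} Fix an outer unit normal $u$ occurring among the edge normals of $D$ and let $E_u$ be the corresponding edge; then $E_u=F_u\oplus G_u$, where $F_u$ is the face of $K$ and $G_u$ the face of $-L$ with outer normal $u$, each an edge or a vertex and at least one an edge. I would analyse $g_{K,L}$ near $\relint E_u$: for $x_0\in\relint E_u$ and $t\to0^+$ the set $K\cap(L+x_0-tu)$ degenerates to a strip of width $\sim t$ along the contact segment $K\cap(L+x_0)$, whence
\[
\lim_{t\to0^+}t^{-1}g_{K,L}(x_0-tu)=\len\bigl(K\cap(L+x_0)\bigr),
\]
and as $x_0$ sweeps $\relint E_u$ the right-hand side is the overlap (``tent'') function of two segments of lengths $a_u:=\len F_u$ and $b_u:=\len G_u$ (one possibly $0$): piecewise linear, with base $a_u+b_u=\len E_u$, maximum $\min(a_u,b_u)$, and plateau of length $|a_u-b_u|$. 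Hence $g_{K,L}$ determines, for every edge normal $u$ of $D$, the \emph{unordered} pair $\{a_u,b_u\}$. What is left is combinatorial: choose, for each $u$, which value is carried by $K$, subject to the closure relation (the $K$-edge vectors must sum to $0$), which together with the known splitting of $D$ couples the choices across different $u$.

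\emph{Fixing the ordering from the finer singularities.} The remaining ambiguity should be removed by the second-order structure of $g_{K,L}$. It is continuous and piecewise quadratic, and it fails to be $C^2$ exactly along the ``event lines'' --- the loci where a translated vertex of $L$ meets the line spanned by an edge of $K$, or symmetrically; the jump of the relevant derivative across such a line is an explicit function of that edge's direction, the length of the contact segment, and the exterior angles at the two incident faces. From these jumps, together with the behaviour of $g_{K,L}$ at the vertices of $D$ (where $\cone(D,v)=\cone(K,v_K)\oplus\cone(-L,v_{-L})$ for the corresponding vertices $v_K,v_{-L}$), I would try to reconstruct the support cones of $K$ and of $-L$ at the relevant vertices, and thereby attach each length/plateau to the correct polygon. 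When this succeeds it fixes $(a_u,b_u)$ for all $u$, hence $K$ and $L$ up to translation, i.e.\ $(K,L)$ up to a trivial associate of $(K',L')$.

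\emph{The exceptional configurations, and the main difficulty.} This scheme fails precisely when the event-line and vertex data cannot separate the two orderings of some splits; I expect this to force $K$ and $L$ to be, up to a common affine map, parallelograms with pairwise parallel edges (with suitable length relations), since only there can the two decompositions $E_u=F_u\oplus G_u$ be interchanged without altering $g_{K,L}$. The hard part of the proof is the ensuing classification: one must enumerate these degenerate configurations, verify that the only coincidences $g_{K,L}=g_{K',L'}$ they produce are those exhibited by the families $(\cK_1,\cL_1),\dots,(\cK_4,\cL_4)$ of Examples~\ref{parall} and~\ref{parall_due} (and that there are no further ones), and check rigidity outside this list --- possibly invoking the solved planar covariogram problem in the sub-cases where one of the four sets is already rigid. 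This exhaustive analysis, rather than the local reconstruction, is where I expect the real work to lie.
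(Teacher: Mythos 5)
This theorem is not proved in the present paper: it is attributed to the companion paper \cite{B4} and stated here only as a known input (see the sentence ``Problem~\ref{jcpf} is completely solved by Bianchi~\cite{B4} \dots''). So there is no in-paper argument for your proposal to be measured against, and I can only assess it on its own terms.

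As a sketch, your first two steps are sound. The support of $g_{K,L}$ recovers $D=K+(-L)$; a Minkowski sum of convex bodies is a polygon only if both summands are, so $K',L'$ are polygons with $K'+(-L')=D$; and the boundary asymptotics
\[
\lim_{t\to0^+}t^{-1}g_{K,L}(x_0-tu)=\len\bigl(K\cap(L+x_0)\bigr),\qquad x_0\in\relint D_u,
\]
do recover, for each edge normal $u$ of $D$, the tent function of the two segments $K_u$ and $(-L)_u$, hence the \emph{unordered} pair of their lengths. This is the genuine, easy part.

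Everything after that, however, is a programme rather than a proof. You assert that the jump data of the second derivatives along ``event lines'' and at the vertices of $D$ should determine the support cones $\cone(K,\cdot)$ and $\cone(-L,\cdot)$, that failure of this is possible only when $K,L$ are affine images of parallelograms with parallel edges, and that the exceptional families thereby produced are exactly the four pairs $(\cK_i,\cL_i)$ of Examples~\ref{parall} and~\ref{parall_due}. None of these assertions is argued; you explicitly defer them (``I would try to reconstruct \dots'', ``I expect this to force \dots'', ``the hard part \dots is where I expect the real work to lie''). But that deferred part \emph{is} the theorem: the claim is precisely a rigidity statement together with an exhaustive classification of the exceptional configurations, and it is exactly what makes the result nontrivial and occupies a separate paper. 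In particular you give no mechanism that couples the edge-ordering choices across different normals $u$ other than the closure relation, and it is far from automatic that local $C^2$-jump data plus closure rules out all but the listed counterexamples (note that already within the parallelogram families the counterexamples \emph{are} consistent with all the local data you list, so a purely local criterion cannot conclude). As submitted, the proposal identifies plausible invariants but does not establish the statement.
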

This result  states that the information provided by the cross covariogram of convex polygons is so rich as to  determine not only one unknown body, as required by Matheron's conjecture, but two bodies, with a few exceptions.

The second problem is in some sense dual to the first one and has
been introduced by Mani-Levitska~\cite{ML}. Let $A$ and $B$ be
convex polyhedral cones  in $\Real^3$, with apex the origin $O$ and $A\cap
B=\set{O}$.
\begin{problem}[Cross covariogram problem for cones]\label{jcpc}\emph{Does the cross covariogram of $A$ and $B$ determine the pair $(A,B)$, among all pairs of convex cones, up to trivial associates?}
\end{problem}
Proposition~\ref{proposizione_coni} provides an answer to
Problem~\ref{jcpc}, while Bianchi~\cite{B4} (see Lemma~\ref{cov_cong_settori_angolari} in this paper) solves
completely the corresponding problem  in the plane, 
describing also some situations of non-unique determination. The techniques that
we use to prove Proposition~\ref{proposizione_coni} 
rely on two main ingredients. The first one
is an analysis of  the set where $g_{A,B}$ is not $C^3$. 
The second ingredient is the observation that a suitable second
order mixed derivative of $g_{A,B}$ provides certain $X$-ray 
functions of the cones. Some results regarding the determination of
convex polyhedral cones from their $X$-ray functions are also needed (see~\cite{B3}).

The proof of Theorem~\ref{teorema0} also requires  the study of the structure of $\pa P\cap\pa P'$, when  $P$ and $P'$ are convex polytopes in $\Real^3$ and $(P,-P)$ and $(P',-P')$ are synisothetic. This study is contained in Section~\ref{sec_syn} while Theorem~\ref{teorema0} is proved in Section~\ref{sec_mainproof}.

In Section~\ref{nmaggiore} the counterexamples in dimension $n\geq 4$ are presented in terms of decomposition of a convex body into direct summands, and the relation between the covariogram and this decomposition is studied. Theorem~\ref{prodotti_cartesiani} classifies the convex bodies which have covariogram equal to one of the counterexamples.
In view of all this the right form to ask Matheron's problem for $n$-dimensional convex  polytopes $P$, when $n\geq4$, is with the restriction to  \emph{directly indecomposable} $P$.

\textsc{Acknowledgements.} We are extremely grateful to G.~Averkov, R.~J.~Gardner and P.~Gronchi for reading large parts of this manuscript and suggesting many arguments that simplified and clarified some of the proofs. We also thank A.~Zastrow, for a very useful discussion regarding Lemma~\ref{jordaninverse}, and P.~Mani-Levitska, for giving us his unpublished note~\cite{ML}.


\section{Definitions, notations and preliminaries}\label{definitions}
For convenience of the reader, we repeat here all the definitions already introduced.
As usual, $S^{n-1}$ denotes the unit sphere in $\Real^n$, centered at 
the origin $O$.  For $x$, $y\in \Real^n$,  $\|x\|$ is the Euclidean norm of $x$ and
$x\cdot y$ denotes scalar product. For $\de>0$, $B(x,\de)$ denotes the open ball in $\Real^n$ centered at $x$ and with radius $\de$. If $u\in S^{n-1}$,
$u^{\perp }$ denotes the $(n-1)$-dimensional subspace  orthogonal to $u$.

If $A\subset \Real^n$  we denote by $\inte A$, $\cl A$, 
$\partial A$ and
$\conv A$ the \emph{interior}, \emph{closure}, \emph{boundary} and
\emph{convex hull} of $A$, respectively. The {\em characteristic function} of $A$
is denoted by $1_A$. The reflection of $A$ in the origin is $-A$. With the symbol $A\mathbin|\pi$ we denote the orthogonal projection of $A$ on the affine space  $\pi$. 
Moreover we define $\pos A=\set{\mu x : x\in A,\: \mu\geq 0}$.
The symbol $\voln$  denotes $n$-dimensional Lebesgue measure, while $\cH^k$ denotes $k$-dimensional Hausdorff measure in $\Real^n$, where $0\leq k\leq n$.

\textit{Convex sets.} 
A {\it convex body} $K\subset\Real^n$ is a compact convex set with non-empty interior.
The symbol $\aff K$ stands for the \emph{affine hull} of $K$; $\dim K$ is the dimension of $\aff K $. The symbols $\relbd K$ and $\relint K$ indicate respectively the
\emph{relative boundary} and the \emph{relative interior} of $K$. The
\emph{difference body} of $K$ is defined by
\[
DK=K+(-K)=\{x-y: x,~y\in K\}.
\]
The \emph{support function} of $K$ is defined, for $x\in\Real^n$, by 
$
h_K(x)=\sup \set{x\cdot y : y\in K}.
$
The Steiner point of $K$ is defined by $s(K)=(1/\la_n(B(0,1)))\int_{S^{n-1}} h_K(u)u\:d\cH^{n-1}(u)$. 
We write 
\begin{equation}\label{decomp_somma_diretta}
K=K_1\oplus\dots\oplus K_s
\end{equation} 
if $K=K_1+\cdots+K_s$ for suitable convex bodies $K_i$ lying in  linear subspaces 
$E_i$ of $\Real^n$ such that $E_1\oplus\cdots\oplus E_s=\Real^n$.  
If a representation $K=L\oplus M$ is only possible with 
$\dim L=0$ or $\dim M=0$ then $K$ is \emph{directly indecomposable}. Each $K$, with $\dim K\geq 1$, has a representation as in \eqref{decomp_somma_diretta}, with $\dim K_i\geq 1$ and $K_i$ directly indecomposable, which is unique up to the order of the summands.

Given $x,y\in\Real^n$, we write $[x,y]$ for the segment with endpoints $x$ and $y$. Given a convex body $K\subset\Real^2$ and $a,b\in\pa K$ the symbol $[a,b]_{\pa K}$  denotes $\set{p\in \pa K :a\leq p\leq b}$  in counterclockwise order on $\pa K$ and $(a,b)_{\pa K}$ denotes the corresponding open arc. 
We will refer to $a$ as the \emph{lower endpoint} of the arc and to $b$ as its \emph{upper endpoint}. 

The \emph{X-ray} of a convex set
$K\subset\Real^n$ with respect to $u\in S^{n-1}$ is the function which
associates to each line $l$ parallel to $u$  the length of $K\cap
l$. The \emph{$-1$-chord function} of $K$ at $p\in \Real^n\setminus\cl{K}$ is defined, 
for each line $l$ through $p$, by
\[
\int_{K\cap l}\|x-p\|^{-2}d\len(x).
\]

\textit{Polytopes.} Let $P$  be a convex polytope in $\Real^n$. 
As usual the $0$-, $1$- and $(n-1)$-dimensional faces are called vertices, edges and facets, respectively.
Given a face  $F$  of $P$ the {\em normal cone} of $P$ at $F$ is denoted by
$N(P,F)$ and is the set of all outer normal vectors to $P$ at $x$,
where $x\in\relint F$, together with $O$. The {\em support cone of $P$ at $F$} is  the
set
\[
\cone(P,F)=\set{\mu(y-x)\;:\;y\in P\;,\;\mu\geq0},
\]
where $x\in \relint F$. Neither definitions  depend on the choice
of $x$.  Two faces $F$ and $G$ of  $P$ are \emph{antipodal} if $\relint
N(P,F)\cap(- \relint N(P,G))\neq\emptyset$.
Given $u\in S^{n-1}$ the \emph{exposed face of $P$ in direction $u$} is 
\[
P_u=\{x\in P : x\cdot u=h_P(u)\}.
\]
It is the unique proper face of $P$ such that the relative interior of its normal cone contains $u$. 
We will repeatedly use the following identities, proved in~\cite[Th.~1.7.5(c)]{Sc} and valid for all $u\in S^{n-1}$ and all convex polytopes $P$, $P'$ in $\Real^n$:
\begin{equation}\label{facce_corpodiff}
(P+P')_u=P_u+P'_u;\quad (D\,P)_u=P_u+(-P)_u.
\end{equation}
Given a face $G$ of $P$,  the meaning of $\Sp(G)$, $x^+(G)$, $x^-(G)$, $P_G$ and of positive, negative or neutral face is introduced in Definitions~\ref{def_positive} and~\ref{definizione_Sp}. See the statement of Lemma~\ref{sigma_meno} for the meaning of  $\Sm(G)$. We say that $P$ and $-P$ are in \emph{general relative position} if $\dim P_w\cap (P_{-w}+x)=0$ for each $w\in S^{n-1}$ and for each $x\in \Real^n$.

In this paper the term \emph{cone} always means cone with apex $O$. A
polyhedral convex cone is \emph{dihedral} if it is the intersection
of two closed half-spaces. 
If $F$ is an edge of a three-dimensional convex polytope $P$, then $\cone(P,F)$ is dihedral. A convex cone is \emph{pointed} if its apex is a vertex.

\textit{Synisothesis.} 
Let $P$ and $Q$ be convex polytopes in $\Real^n$, let $F$ be a
proper face of $P$, and let $G$ be a proper face of $Q$. We say that
$F$ and $G$ are {\em isothetic} if $G$ is a translate of $F$ and
\[
\cone(P,F)=\cone(Q,G).
\]
Given convex polytopes $P_1$, $P_2$, $Q_1$ and $Q_2$ in $\Real^n$ we
say that $(P_1,P_2)$ and $(Q_1,Q_2)$ are \emph{synisothetic} if
given any proper face $F$ of $P_1$ or of $P_2$ there is a
proper face $G$  of $Q_1$ or of $Q_2$ (and conversely) such
 that $F$ and $G$ are isothetic.

\textit{Covariogram and trivial associates.} Let $K$, $L$, $K'$ and $L'$ be  convex sets in $\Real^n$. 
The {\em cross covariogram} $g_{K,L}$ is the function $g_{K,L}(x)=\voln(K\cap (L+x))$, where $x\in\Real^n$ is such that $\voln(K\cap (L+x))$ is finite. 
It is evident that $g_{K,K}=g_K$ and that, for any $x\in\Real^n$,
\[
g_{K,L}=g_{K+x,L+x}=g_{-L,-K}.
\]
We call $(K,L)$ and $(K',L')$  \emph{trivial associates} if either $(K,L)=(K'+x,L'+x)$ or 
$(K,L)=(-L'+x,-K'+x)$, for some $x\in \Real^n$. 
It is easy to prove that
\begin{align}
&\supp g_{K}=D\,K,& &\text{ }& &\supp g_{K,L}=K+(-L),\label{support}\\
&g_K=1_K\ast 1_{-K},& &\text{ }& &g_{K,L}=1_K\ast 1_{-L},\label{convolution}\\
&\widehat{g_K}=|\widehat{1_K}|^2& &\text{and}&
&\widehat{g_{K,L}}=\widehat{1_K}\cdot
\overline{\widehat{1_L}},\label{fourier}
\end{align}
where $\supp f$ and $\widehat{f}$ denote respectively the support
and the Fourier transform of the function $f$.

\section{The cross covariogram problem for polygons}

This section recalls some results proved in~\cite{B4} and needed in this paper.
Let  $(\rho,\te)$ denote polar coordinates.
For brevity, given $\al,\be\in[0,2\pi]$ with $\al<\be$, we write  $\ang{\al}{\be}$ for the  cone $\{(\rho,\te):\,\al \leq\te\leq\be\}$.

\begin{example}
Let $\F_1=\ang{0}{3\pi/4}$, $\G_1=-\ang{{\pi}/4}{{\pi}/2}$,
$\F_2=\ang{0}{{\pi}/{4}}$ and  $\G_2=-\ang{{\pi}/2}{{3\pi}/4}$.
We have $\set{\F_1,-\G_1}\neq\set{\F_2,-\G_2}$ and $\jcov{\cT\F_1,\cT\G_1}=\jcov{\cT\F_2,\cT\G_2}$, for any non-singular affine transformation $\cT$. 
\end{example}

\begin{lemma}\label{cov_cong_settori_angolari}
Let $A$, $B$, $A'$ and $B'$ be pointed closed convex cones in $\Real^2$ with non-empty interior and apex the origin $O$ such that $\inte A\cap \inte B=\emptyset$.
The identity $\jcov{A,B}=\jcov{A',B'}$ holds if and only if one of the following cases occurs:
\begin{enumerate}
\item\label{acaso0} $\set{A,-B}=\set{A',-B'}$;
\item\label{acaso12} there exist a linear transformation $\cT$ and $i,j\in\set{1,2}$, $i\neq j$, such that
\begin{equation}\label{acaso1}
\{\cT A,-\cT B\}=\{\F_i,-\G_i\}\quad\text{and}\quad \{\cT A',-\cT B'\}=\{\F_j,-\G_j\}.
\end{equation}
\end{enumerate}
\end{lemma}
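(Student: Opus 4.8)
The plan is to reduce the two-dimensional cone problem to the already-solved planar polygon problem (Theorem~\ref{cov_congiunto_poligoni}) by a truncation-and-limiting argument, together with a direct analysis of the local structure of $\jcov{A,B}$ near the boundary of its support. First I would record the easy direction: if $\set{A,-B}=\set{A',-B'}$ then $\jcov{A,B}=\jcov{A',B'}$ by the identities $g_{K,L}=g_{-L,-K}$; and the identity claimed in the Example shows that case~\eqref{acaso12} also forces $\jcov{A,B}=\jcov{A',B'}$ (this is where the affine-invariance of cross covariograms under a common linear $\cT$, suitably normalized by the Jacobian, is used — one checks $\jcov{\cT K,\cT L}=|\det\cT|\,\jcov{K,L}(\cT^{-1}\cdot)$, so equality of two cross covariograms is preserved by applying the same $\cT$). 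The substance is the forward implication.

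For the hard direction, suppose $\jcov{A,B}=\jcov{A',B'}$ with all four cones pointed, two-dimensional, with apex $O$, and $\inte A\cap\inte B=\emptyset$. The support of $\jcov{A,B}$ is $A+(-B)=A-B$, a closed convex cone (pointed or not, depending on whether $A$ and $-B$ together span a half-plane or less); since $\jcov{A,B}=\jcov{A',B'}$, we get $A-B=A'-B'$ as a first constraint. Next I would examine the singularities of $\jcov{A,B}$. Writing $A=\ang{\al_1}{\al_2}$ and $-B=\ang{\be_1}{\be_2}$ after a rotation, the function $\jcov{A,B}(x)$ is (locally near a generic point of a given two-dimensional region) a quadratic polynomial in $x$, and the loci across which it fails to be $C^\infty$ are exactly the finitely many rays through $O$ spanned by the edge directions of $A$ and of $-B$ — i.e.\ the rays $\pos\{(\cos\al_i,\sin\al_i)\}$ and $\pos\{(\cos\be_j,\sin\be_j)\}$, restricted to where they lie inside $\inte(A-B)$. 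From $\jcov{A,B}=\jcov{A',B'}$ the union of these singular rays must coincide with the corresponding union for $(A',B')$; moreover, by inspecting the jump in an appropriate second-order derivative across each singular ray (the jump encodes the angular opening of the cone $A$, resp.\ $-B$, that the ray bounds), one reads off not just the rays but the cones $A$, $-B$ up to the ambiguity of deciding which singular ray belongs to $A$ and which to $-B$ when rays coincide or interact — this is precisely the ambiguity producing the exceptional family $\{\F_i,-\G_i\}$.

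To organize the case analysis cleanly I would instead truncate: for $R>0$ let $A_R=A\cap B(O,R)$ and $B_R=B\cap B(O,R)$ be convex polygons, and observe $\jcov{A_R,B_R}$ agrees with $\jcov{A,B}$ on a neighbourhood of $O$ whose size grows with $R$. One must be slightly careful that $A_R$, $B_R$ are honest convex polygons with the right germ at $O$; this is routine. If $\jcov{A,B}=\jcov{A',B'}$, then for large $R$ the germs at $O$ of $\jcov{A_R,B_R}$ and $\jcov{A'_R,B'_R}$ coincide on a large ball, and a local-uniqueness strengthening of Theorem~\ref{cov_congiunto_poligoni} (applied to the corners at $O$, the only corners that matter near $O$) yields that either the pairs of germs are trivial associates — giving $\set{A,-B}=\set{A',-B'}$, case~\eqref{acaso0} — or they fall into the exceptional quadruple of parallelogram pairs from Examples~\ref{parall} and~\ref{parall_due}, whose corners at $O$ are exactly the cones $\F_i$, $\G_i$ up to a linear map, giving case~\eqref{acaso12}. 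The main obstacle is this last reduction: Theorem~\ref{cov_congiunto_poligoni} is a global statement about polygons, so I need to localize it to the behaviour at a single vertex, checking that the exceptional configurations there are genuinely only the angular-sector versions of the parallelogram counterexamples and that no new coincidences appear at the level of germs; this requires a careful bookkeeping of how the quadratic pieces of $\jcov{A,B}$ fit together around $O$, but involves no idea beyond those already used in~\cite{B4}.
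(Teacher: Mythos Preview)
The paper does not prove this lemma; it is quoted from~\cite{B4} without proof (Section~3 opens by saying it ``recalls some results proved in~\cite{B4}''). So there is no ``paper's own proof'' to compare against here, only the question of whether your strategy is sound.

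Your first approach --- reading off the edge directions of $A$ and $-B$ from the rays along which the piecewise-quadratic, degree-$2$-homogeneous function $\jcov{A,B}$ fails to be smooth, and then disambiguating via the jump data --- is the right idea and is almost certainly how~\cite{B4} proceeds. But you abandon it in favour of the truncation argument, and that second approach has a genuine circularity problem. Theorem~\ref{cov_congiunto_poligoni} is the \emph{global} polygon result from~\cite{B4}; the cone lemma you are trying to prove is precisely the \emph{local} building block one needs at each vertex to establish that global result. When you write that you need ``a local-uniqueness strengthening of Theorem~\ref{cov_congiunto_poligoni} applied to the corners at $O$'' and that this ``requires a careful bookkeeping of how the quadratic pieces of $\jcov{A,B}$ fit together around $O$,'' you are describing exactly the content of Lemma~\ref{cov_cong_settori_angolari} itself. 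The reduction buys nothing: the work you defer to the ``bookkeeping'' step is the whole lemma.

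A minor additional issue: $A_R=A\cap B(O,R)$ is not a polygon (its boundary contains a circular arc), so Theorem~\ref{cov_congiunto_poligoni} would not apply literally even if the reduction were non-circular. Truncating by a half-plane would fix this, but it does not rescue the argument. You should instead carry through the direct singularity analysis you sketched first: classify the configurations of the (at most four) singular rays inside $A-B$, compute $\jcov{A,B}$ explicitly on each sector, and check case by case when two distinct configurations can produce the same piecewise-quadratic function.
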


\begin{remark}\label{vettore_in_comune}Observe that $\inte\F_1\cap\inte(-\G_1)\neq\emptyset$ and $\inte\F_2\cap\inte(-\G_2)=\emptyset$. Thus if 
$\inte A\cap\inte(-B)$ and  $\inte A'\cap\inte(-B')$ are both empty or  both non-empty, then Lemma~\ref{cov_cong_settori_angolari} implies $\set{A,-B}=\set{A',-B'}$. 
\end{remark}

\begin{example}\label{parall}
Let $\al,\be,\ga$ and $\de$ be positive real numbers, let $y\in\Real^2$ and  let $I_1=[(-1,0),(1,0)]$, $I_2=(1/\sqrt{2})\ [(-1,-1),(1,1)]$, $I_3=[(0,-1),(0,1)]$ and $I_4=(1/\sqrt{2})\ [(1,-1),(-1,1)]$. We define four parallelograms as follows:
$\cK_1=\al I_1+\be I_2$; $\cL_1=\ga I_3+\de I_4+y$;
$\cK_2=\al I_1+\de I_4$ and $\cL_2=\be I_2+\ga I_3+y$.
See Fig.~\ref{fig_parall}. 

The pairs $(\cK_1,-\cL_1)$, $(\cK_2,-\cL_2)$ are not synisothetic (no vertex of a polygon in the second pair has a support cone equal to the support cone of the top vertex of $\cL_1$ or to its reflection). Moreover $\jcov{\cK_1,\cL_1}=\jcov{\cK_2,\cL_2}$.
\begin{figure}
\begin{center}
\input{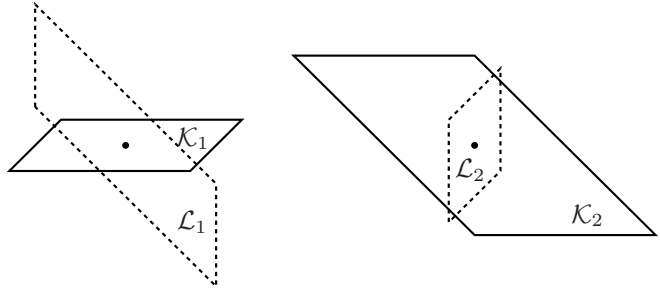}
\end{center}
\caption{Up to affine transformations, $(\cK_1,\cL_1)$ and $(\cK_2,\cL_2)$ are the only pairs of convex polygons with equal cross covariogram which are not synisothetic. }
\label{fig_parall}
\end{figure}
\end{example}

\begin{example}\label{parall_due}Let $\al,\be,\ga$ and $\de$ be positive real numbers, let $m\in \Real$, $y\in\Real^2$ and let  $I^{(m)}=(1/\sqrt{1+m^2})\,[(-m,-1),(m,1)]$. Assume either  $m=0$, $\al\neq \ga$ and $\be\neq\de$ or else $m\neq 0$ and $\al\neq \ga$. We define four parallelograms as follows:
$\cK_3=\al I_1+\be I_3$;
$\cL_3=\ga I_1+\de I^{(m)}+y$;
$\cK_4=\ga I_1+\be I_3$ and
$\cL_4=\al I_1+\de I^{(m)}+y$.
See Fig.~\ref{fig_parall_due}. 

We have $\jcov{\cK_3,\cL_3}=\jcov{\cK_4,\cL_4}$  and the pairs $(\cK_3,-\cL_3)$ and $(\cK_4,-\cL_4)$ are synisothetic. 
However, $(\cK_3,\cL_3)$ and $(\cK_4,\cL_4)$ are not trivial associates. 
\begin{figure}
\begin{center}
\input{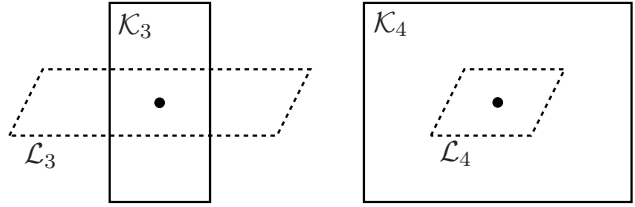}
\end{center}
\caption{Up to affine transformations, $(\cK_3,\cL_3)$ and $(\cK_4,\cL_4)$ are the only pairs of convex polygons with equal cross covariogram which are synisothetic and are not trivial associates.}
\label{fig_parall_due}
\end{figure}
\end{example}

\begin{lemma}\label{coni_finale_shortened}
Let $A$, $B$, $C$ and $D$ be convex cones in  $\Real^n$ with apex the origin $O$. Assume that each of them either coincides with $\set{O}$ or has non-empty interior, and, moreover,
$A\cup B\subset \set{(x_1,x_2,\dots,x_n):x_n\geq0}$,
$A\cap\set{x_n=0}=B\cap\set{x_n=0}=\set{O}$,
$C\cup D\subset \set{x_n\leq0}$ and
$\conv (C\cup D)$ is pointed.
If $g_{A,C}+g_{B,D}=g_{A,D}+g_{B,C}$
then either $A=B$ or  $C=D$.
\end{lemma}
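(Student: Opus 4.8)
The plan is to reduce the identity $g_{A,C}+g_{B,D}=g_{A,D}+g_{B,C}$ to a statement about the cross covariograms of the two-dimensional ``links'' of the cones, and then invoke Lemma~\ref{cov_cong_settori_angolari}. First I would dispose of the degenerate cases: if one of $A,B$ (say $A$) equals $\set{O}$, then $g_{A,C}=g_{A,D}=0$ identically, and the hypothesis becomes $g_{B,D}=g_{B,C}$; taking supports via \eqref{support} gives $B+(-D)=B+(-C)$, hence by cancellation of summands (the difference of support sets determines the summand since the cones are pointed, or directly $D=C$ after translating to a common apex) we get $C=D$. The same works if one of $C,D$ is $\set{O}$. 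So from now on all four cones are full-dimensional with non-empty interior.

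The main step is a slicing/homogeneity argument. Because $A,B\subset\set{x_n\ge0}$ meet $\set{x_n=0}$ only at $O$, and $C,D\subset\set{x_n\le0}$ with $\conv(C\cup D)$ pointed, the set $A\cap(C+x)$ is non-empty and bounded precisely for $x$ in a full-dimensional cone, and for such $x$ the intersection is itself a bounded convex body ``pinched'' between the hyperplanes $\set{x_n=0}$ and $\set{x_n=x_n\text{-coordinate of }x}$. I would fix the last coordinate: write $x=(x',t)$ with $t<0$, and for each fixed $t$ study the function $x'\mapsto g_{A,C}(x',t)$ on the hyperplane $\Real^{n-1}$. Using the homogeneity of cones, $A\cap\set{x_n=s}=s\widetilde A$ for $s>0$ where $\widetilde A=A\cap\set{x_n=1}$ is a convex body in $\Real^{n-1}$ (similarly $\widetilde B$, and $\widetilde C,\widetilde D$ are the sections of $-C,-D$ at height $1$, say). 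Then $\voln(A\cap(C+x))$ is an integral over $s\in[0,|t|]$ of $\la_{n-1}\big((s\widetilde A)\cap((|t|-s)\widetilde C+\text{shifted }x')\big)$, which after the substitution $s=|t|\sigma$ becomes $|t|^{n}$ times a fixed integral of $\la_{n-1}\big((\sigma\widetilde A)\cap((1-\sigma)\widetilde C+\cdot)\big)$ over $\sigma\in[0,1]$. The key observation is that this $\sigma$-integral of sections of dilates of $\widetilde A$ and $\widetilde C$ is, up to a normalizing constant depending only on the dimension, exactly $g_{\widetilde A,\widetilde C}$ evaluated at an appropriate affine image of $x'/|t|$: indeed $\int_0^1 \la_{n-1}\big((\sigma K)\cap((1-\sigma)L+z)\big)\,d\sigma$ is a standard ``cone covariogram'' identity that I would record as a sub-lemma, identifying it with $g_{K,L}(z')$ for $z'$ a linear function of $z$ (this is the $n$-dimensional analogue of the planar fact that the covariogram of a triangular cone encodes the covariogram of its base segment). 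Consequently, the hypothesis $g_{A,C}+g_{B,D}=g_{A,D}+g_{B,C}$, restricted to each slice $\set{x_n=t}$ and rescaled by $|t|^{-n}$, becomes
\[
g_{\widetilde A,\widetilde C}+g_{\widetilde B,\widetilde D}=g_{\widetilde A,\widetilde D}+g_{\widetilde B,\widetilde C}
\]
as an identity of functions on $\Real^{n-1}$, where now $\widetilde A,\widetilde B,\widetilde C,\widetilde D$ are honest convex bodies.

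At this point the problem is genuinely $(n-1)$-dimensional and, in fact, planar in spirit: I expect that one reduces further to $n-1=2$ by the same trick (intersecting with a generic $2$-plane through a fixed line, or by noting that it suffices to prove $A=B$ or $C=D$ after testing against all such slices, and equality of full-dimensional cones is detected slice-by-slice). In the planar case the four bodies are replaced by four planar convex cones (the supports of $g_{\widetilde A,\widetilde C}$ etc. are the $2$-dimensional cones $\widetilde A+(-\widetilde C)$, and more precisely one works with the angular sectors spanned by the $\widetilde\cdot$'s) and one is exactly in the situation of Lemma~\ref{cov_cong_settori_angolari}: comparing supports via \eqref{support} gives $\widetilde A-\widetilde C$ and $\widetilde A-\widetilde D$, and the identity forces a matching of the pieces. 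The conclusion of Lemma~\ref{cov_cong_settori_angolari}, together with Remark~\ref{vettore_in_comune} (the ``general position'' hypotheses $A\cap\set{x_n=0}=\set{O}$ etc. and pointedness of $\conv(C\cup D)$ are precisely what rules out the exceptional parallelogram-type configurations and forces the non-exceptional alternative $\set{A,-C}=\set{A',-D'}$ type equalities), yields that the pair $(\widetilde A,\widetilde C)$ must coincide with $(\widetilde A,\widetilde D)$ or with $(\widetilde B,\widetilde C)$, i.e. $\widetilde C=\widetilde D$ or $\widetilde A=\widetilde B$. Undoing the slicing (the sections determine the cones since cones are the positive hulls of any one of their full-dimensional sections) gives $C=D$ or $A=B$, as desired.

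\medskip

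\noindent\textbf{Where the difficulty lies.} The routine parts are the homogeneity bookkeeping and the reduction of dimension; these are tedious but mechanical. The genuine obstacle is twofold. First, the sub-lemma identifying the ``slab integral'' $\int_0^1\la_{n-1}((\sigma K)\cap((1-\sigma)L+z))\,d\sigma$ with (a constant times) the cross covariogram $g_{K,L}$ of the sections needs to be set up with the correct affine change of variables and the correct domain of validity; getting the normalization and the directions right is where sign errors hide. Second, and more seriously, applying Lemma~\ref{cov_cong_settori_angolari} requires checking that the hypotheses translate correctly: one must verify that $\inte\widetilde A\cap\inte\widetilde C=\emptyset$ (or the appropriate disjointness), that the relevant sectors are pointed, and — crucially — that the ``exceptional'' alternative \ref{acaso12} of Lemma~\ref{cov_cong_settori_angolari} is excluded by the geometric hypotheses $A\cup B\subset\set{x_n\ge0}$ and $C\cup D\subset\set{x_n\le0}$ with the stated transversality. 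I expect the cleanest route is to argue via Remark~\ref{vettore_in_comune}: show that the emptiness/non-emptiness of the relevant ``interior overlaps'' is forced to be consistent across the identity, so that only case \ref{acaso0} can occur, after which the matching of support sets closes the argument.
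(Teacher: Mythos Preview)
The paper does not actually prove this lemma: Section~3 opens with ``This section recalls some results proved in~\cite{B4} and needed in this paper,'' and Lemma~\ref{coni_finale_shortened} is one of those quoted results. So there is no in-paper argument to compare against, and I assess your proposal on its own merits.

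Your central step is wrong. You assert that the slab integral
\[
F_{K,L}(z)=\int_0^1 \la_{n-1}\big((\sigma K)\cap((1-\sigma)L+z)\big)\,d\sigma
\]
coincides, up to a dimensional constant and an affine change of variable in $z$, with the cross covariogram $g_{K,L}$. This is false already in one dimension: for $K=L=[0,1]$ a direct computation gives $F_{K,L}(z)=\tfrac14(1-|z|)^2$ on $[-1,1]$, while $g_{K,L}(z)=1-|z|$. No affine reparametrisation of $z$ converts a quadratic profile into a linear one, so the bridge you rely on to pass from $g_{A,C}+g_{B,D}=g_{A,D}+g_{B,C}$ to an identity between cross covariograms of the sections $\tilde A,\tilde B,\tilde C,\tilde D$ simply does not exist.

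Even if one granted the reduction, the subsequent plan would not close. Lemma~\ref{cov_cong_settori_angolari} treats an equation $g_{A,B}=g_{A',B'}$ between \emph{two} cross covariograms of planar \emph{cones}; what your reduction would produce is a four-term bilinear identity
\[
g_{\tilde A,\tilde C}+g_{\tilde B,\tilde D}=g_{\tilde A,\tilde D}+g_{\tilde B,\tilde C}
\]
between cross covariograms of bounded convex \emph{bodies}. Neither the format of the hypothesis nor the class of objects matches, and Remark~\ref{vettore_in_comune} does not help with an identity of this shape.

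A more promising structural observation is to rewrite the hypothesis as the vanishing convolution
\[
(1_A-1_B)\ast(1_{-C}-1_{-D})=0.
\]
The assumptions force $\supp(1_A-1_B)\subset\conv(A\cup B)$, which is a pointed cone in $\set{x_n\ge0}$ (since $A\cap\set{x_n=0}=B\cap\set{x_n=0}=\set{O}$), and $\supp(1_{-C}-1_{-D})\subset-\conv(C\cup D)$, pointed by hypothesis. This is the natural setting for a Titchmarsh--Lions support argument (equivalently, a Fourier--Laplace analysis in the half-space variable) forcing one factor to vanish, i.e.\ $A=B$ or $C=D$. Whether or not this is literally the route taken in~\cite{B4}, it at least explains why exactly these pointedness hypotheses are present and isolates the genuine content of the lemma, which your slicing scheme does not capture.
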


\section{Determining the faces of $P$: proof  of  \eqref{t_lmr_facce} in Theorem~\ref{teorema_lmr} }
The result regarding $g_\tF+g_\tG$ in next proposition was first observed by K.~Rufibach~\cite[p.14]{R}.
\begin{proposition}\label{rufibach} Let $P\subset\Real^n$ be a convex polytope with non-empty interior, let $w\in S^{n-1}$, $F=P_w$ and $G=P_{-w}$. The covariogram $g_P$ determines both $g_\tF+g_\tG$ and $g_{\tF,\tG}$, where  $\tF=F\mathbin| w^\perp$ and $\tG=G\mathbin| w^\perp$.
\end{proposition}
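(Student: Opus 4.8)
The statement concerns the behavior of $g_P$ near a point $x_0$ on the boundary of $\supp g_P = DP$ where a pair of parallel facets $F = P_w$, $G = P_{-w}$ "meet". The natural approach is to look at the second-order distributional derivatives of $g_P$ and isolate their singular parts along the hyperplane directions associated to $w$. First I would set up coordinates with $w = e_n$, write $\Real^n = w^\perp \times \Real$, and translate $P$ so that $F$ lies in $w^\perp$ (i.e. $h_P(w) = 0$) and $G$ lies in the hyperplane $\{x_n = -a\}$ where $a = h_P(w) + h_P(-w) > 0$ is the width of $P$ in direction $w$. Then the point $x_0 = -a\,e_n$ (more precisely, translates of it by $w^\perp$) is where $F$ and $G+x$ are forced to be coplanar.

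**Key steps.** The core computation is a local analysis of $g_P(x', x_n) = \voln\bigl(P \cap (P + (x',x_n))\bigr)$ for $x_n$ slightly larger than $-a$ (so $x' \in w^\perp$ ranges freely). As $x_n \downarrow -a$, the intersection $P \cap (P + x)$ becomes a thin slab whose horizontal cross-section at the critical height is governed by $F \cap (G' + x')$, where $G' = \tG$ is the projection of $G$ onto $w^\perp$; integrating out the (small) vertical extent and the contributions from the faces adjacent to $F$ and to $G$, a Taylor/stratification argument in the $x_n$ variable shows that the coefficient of the leading singular term (the jump in an appropriate one-sided $x_n$-derivative, or equivalently the singular part of $\pa^2 g_P/\pa x_n^2$ restricted to the relevant stratum) is, up to an explicit positive constant, the function $x' \mapsto \cH^{n-1}\bigl(\tF \cap (\tG + x')\bigr) = g_{\tF,\tG}(x')$. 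One must check carefully which lower-dimensional faces contribute at this order: only $F$, $G$, and their support cones enter, because all other faces of $P$ sit at a positive distance from the critical slab and hence contribute only $C^\infty$ terms near $x_0$. A separate, slightly finer look at the next term — or at the behavior for $x_n$ on the other side, together with the behavior of the slab cross-section as one moves off the critical height — produces $\cH^{n-1}(\tF \cap (\tF + x')) + \cH^{n-1}(\tG \cap (\tG + x')) = g_{\tF}(x') + g_{\tG}(x')$ as another distributional invariant extracted from $g_P$. Since $g_P$ is given, both $g_{\tF,\tG}$ and $g_{\tF} + g_{\tG}$ are thereby determined.

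**Main obstacle.** The delicate point is the precise bookkeeping of the local contributions: near $x_0$ the body $P \cap (P+x)$ is, to leading order, a prism over $\tF \cap (\tG + x')$ but its exact shape is truncated by the support cones $\cone(P,F)$ and $\cone(P,-G)$ (equivalently by the facets of $P$ adjacent to $F$ and to $G$), and one must verify that these truncations do not pollute the leading singular coefficient — i.e. that they affect $g_P$ only through terms that are $C^2$, or at any rate smoother than the term we are reading off. This requires a clean description of how the "lateral" facets recede from the critical height at a linear rate, so that the volume of the truncated region contributes at strictly higher order in $(x_n + a)$. The other mild technical nuisance is handling the case where $\tF$ and $\tG$ overlap in a set of positive $(n-1)$-measure versus where they only touch, but this only changes which one-sided derivative one takes, not the substance of the argument. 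I would organize the proof so that the leading-order slab computation is stated as a lemma about the asymptotics of $\voln(P \cap (P + x))$ as $x \to x_0$, and then simply match coefficients.
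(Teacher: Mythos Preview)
Your overall instinct---read off the data from the singular part of $\pa^2 g_P/\pa w^2$---matches the paper's approach, and your extraction of $g_{\tF,\tG}$ from the leading asymptotics near the facet $(DP)_w$ (equivalently, near your $x_0$) is correct.

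The gap is in how you propose to recover $g_{\tF}+g_{\tG}$. You suggest it comes from ``the next term'' in the same expansion, or from looking at $x_n$ ``on the other side'' of the boundary point. Neither works. The function $g_{\tF}(x')=\cH^{n-1}(\tF\cap(\tF+x'))$ records the self-interaction of $F$ with its own translate $F+x$, and this only occurs when $F$ and $F+x$ are coplanar, i.e.\ when $x_n=0$---in the \emph{interior} of $DP$, not near the boundary facet. The higher-order corrections to your slab volume near $x_n=-a$ involve the support cones $\cone(P,F)$ and $\cone(P,G)$, not the self-covariograms of the facets.

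The paper handles both quantities in one stroke by computing $\pa^2 g_P/\pa w^2$ globally as a distribution: writing $\pa 1_P/\pa w=-\sum_i (w\cdot\nu_i)\de_{F_i}$ and convolving gives a sum over all pairs $(F_i,F_j)$ of facets. Pairs of non-parallel facets contribute an absolutely continuous measure; pairs of parallel facets contribute measures supported on the hyperplanes $\aff(F_i-F_j)$. The Lebesgue decomposition of the associated Radon measure into absolutely continuous and singular parts is unique, so the singular part is determined by $g_P$. That singular part, restricted to the boundary facet $(DP)_w=F-G$, has density $g_{\tF,\tG}$; restricted to $w^\perp\cap\inte DP$ (away from the other $\nu_j^\perp$, then extended by continuity), its density is $g_{\tF}+g_{\tG}$, coming from the diagonal terms $F-F$ and $G-G$. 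So the two pieces of information live at two different heights, $x_n=\pm a$ and $x_n=0$, and you need to look at both.
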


The possibility of proving Proposition~\ref{rufibach} using the expression of the second order distributional derivative of $g_P$ computed in next lemma was  suggested by G.~Averkov. Let $C^\infty_0(\Real^n)$ denote the class of infinitely differentiable functions on $\Real^n$ with compact support.
\begin{lemma}\label{deriv_distrib}
Let $P\subset\Real^n$ be a convex polytope with non-empty interior. Let $F_1,\dots, F_m$ be its facets, $\nu_i$ be the unit outer normal of $P$ at $F_i$, for $i=1,\dots,m$, let $w\in S^{n-1}$ and let $I_p=\{(i,j):\text{$F_i$ is parallel to  $F_j$}\}$ and $I_{np}=\{(i,j):\text{$F_i$ is not parallel to  $F_j$}\}$. Then, for $\phi\in C^\infty_0(\Real^n)$, we have
\begin{multline}\label{second_derivatives}
-\frac{\pa^2 g_P}{\pa w^2} (\phi)=
\sum_{(i,j)\in I_{np}} \frac{w\cdot\nu_i\ w\cdot\nu_j}{\sqrt{1-(\nu_i\cdot\nu_j)^2}}
\int_{\Real^n}\cH^{n-2} (F_i\cap(F_j+z))\,\phi(z)\,dz\\
+\sum_{(i,j)\in I_{p}} w\cdot\nu_i\ w\cdot\nu_j
\int_{F_i-F_j}\cH^{n-1}(F_i\cap(F_j+z))\,\phi(z)\,d\cH^{n-1}(z).
\end{multline}
Both sums in the right hand side of \eqref{second_derivatives} are uniquely determined by $g_P$.
\end{lemma}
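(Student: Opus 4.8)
The plan is to compute the distributional second derivative $\pa^2 g_P/\pa w^2$ directly from the integral representation $g_P(x)=\voln(P\cap(P+x))$ and identify its singular part along parallel facet directions. First I would recall that $g_P$ is continuous and piecewise polynomial, smooth on the interior of each region where the combinatorial type of $P\cap(P+x)$ is constant, so the only place where derivatives can jump or produce singular measures is across the ``walls'' where a facet of $P$ begins or ceases to meet a facet of $P+x$. For a first derivative in direction $w$, the standard fact (which I would cite or re-derive via the coarea formula) is that $\pa g_P/\pa w$ is represented by an $L^1$ function: indeed $g_P(x)=\int \cH^{n-1}\bigl((P\cap(P+x))\cap\{y\cdot w = t\}\bigr)\,dt$, and differentiating in $w$ brings in boundary terms from the facets $F_i$ of $P$ and $F_j+x$ of $P+x$, each weighted by $w\cdot\nu_i$ (the speed at which the half-space $\{y\cdot\nu_i\le h_P(\nu_i)\}$ moves). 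So $\pa g_P/\pa w$ is a finite sum of functions each of which is, up to the constant $w\cdot\nu_i$, the cross covariogram-type integral of $F_i$ against the opposite facet structure.

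Next I would differentiate a second time. Writing $\pa g_P/\pa w$ as a sum over facets $F_i$ of $P$ of terms of the form $-(w\cdot\nu_i)\,\voln'$-type contributions, each individual term is again piecewise polynomial but now with jumps: as $x$ moves, $F_i\cap(P+x)$ changes facet-structure precisely when $F_i$ meets a facet $F_j+x$ transversally (giving an absolutely continuous contribution, because the $(n-2)$-dimensional intersection $F_i\cap(F_j+z)$ varies continuously and the jump in the normal derivative is governed by the dihedral geometry) or when $F_i$ is parallel to $F_j$ (giving a genuine singular measure supported on the $(n-1)$-dimensional affine piece $F_i-F_j$, because then the overlap $F_i\cap(F_j+z)$ has positive $\cH^{n-1}$-measure and appears or disappears instantaneously as $z$ crosses the hyperplane parallel to $F_i$). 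The non-parallel term picks up the factor $\sqrt{1-(\nu_i\cdot\nu_j)^2}$ in the denominator: this is the standard Jacobian relating the $(n-2)$-volume of $F_i\cap(F_j+z)$ measured intrinsically to the rate at which it sweeps out $n$-volume when the two facet hyperplanes (with normals $\nu_i$, $\nu_j$) are pushed against each other — concretely, $\sin$ of the angle between the two hyperplanes is $\sqrt{1-(\nu_i\cdot\nu_j)^2}$. Testing against $\phi\in C^\infty_0(\Real^n)$ and summing over all ordered pairs $(i,j)$ yields exactly \eqref{second_derivatives}, with the $I_{np}$ sum an ordinary integral against Lebesgue measure and the $I_p$ sum an integral against $(n-1)$-dimensional Hausdorff measure on the lower-dimensional sets $F_i-F_j$.

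Finally, for the last assertion — that both sums are separately determined by $g_P$ — I would argue by the mutual singularity of the two pieces. The first sum defines an absolutely continuous distribution (a function in $L^1_{loc}$, or at worst a measure absolutely continuous with respect to $\voln$), while the second is a measure supported on a finite union of hyperplane-pieces of dimension $n-1$, hence singular with respect to $\voln$. By uniqueness of the Lebesgue decomposition of the distribution $-\pa^2 g_P/\pa w^2$ (which is a distribution of order $\le 2$, and in fact a signed measure once we fix $w$ generically, or we can just phrase everything in the sense of distributions and split according to whether the support meets the relevant hyperplanes in a set of full or null $\cH^{n-1}$-measure), each summand is recovered from $g_P$. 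I should be slightly careful that $w$ is chosen so that no facet $F_i$ of $P$ has $\nu_i\perp w$ in a degenerate way that would merge the two regimes, but one can either choose $w$ generically and then use continuity, or observe that the coefficients $w\cdot\nu_i$ are known once $g_P$ (hence the normal fan, via \eqref{support}) is known.

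The main obstacle, I expect, is making the distributional computation rigorous rather than heuristic: justifying that differentiating under the integral sign and collecting boundary terms is valid, that the jump of the first derivative across each wall is exactly the claimed cross-section integral with the correct geometric constant $w\cdot\nu_i\,w\cdot\nu_j/\sqrt{1-(\nu_i\cdot\nu_j)^2}$, and handling the lower-dimensional strata (faces of codimension $\ge 2$ of $P$) where several walls meet — these form a set of codimension $\ge 2$ and contribute nothing to a distribution of order $2$, but this needs a clean argument, e.g. via a stratification of the region where $g_P$ fails to be $C^2$ together with a dimension count, or by an explicit local computation near a relatively interior point of each wall combined with a density/approximation argument.
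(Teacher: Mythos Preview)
Your approach is plausible in outline but differs from the paper's, and the paper's route neatly sidesteps exactly the obstacle you flag at the end. Instead of differentiating the geometric integral $x\mapsto\voln(P\cap(P+x))$ directly and tracking how the combinatorial type of the intersection changes, the paper exploits the convolution identity $g_P=1_P\ast 1_{-P}$. It first computes the distributional derivative $\pa 1_P/\pa w=-\sum_i (w\cdot\nu_i)\,\de_{F_i}$ (divergence theorem, or approximation by smooth bodies), so that
\[
\frac{\pa^2 g_P}{\pa w^2}=\frac{\pa 1_P}{\pa w}\ast\frac{\pa 1_{-P}}{\pa w}
=-\sum_{i,j}(w\cdot\nu_i)(w\cdot\nu_j)\,\de_{F_i}\ast\de_{-F_j}.
\]
The whole computation then reduces to evaluating, for each pair $(i,j)$, the convolution of two $(n-1)$-dimensional surface measures $\de_{F_i}\ast\de_{-F_j}$, which the paper does by an explicit choice of coordinates in the parallel and non-parallel cases. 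The geometric factor $1/\sqrt{1-(\nu_i\cdot\nu_j)^2}$ drops out of a change-of-variables Jacobian rather than from a dihedral-angle argument. This avoids any need to stratify the singular set of $g_P$, handle codimension-$\ge2$ strata, or justify differentiation under the integral sign with moving boundaries.

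Your argument for the final assertion --- that the two sums are separately determined --- is essentially the same as the paper's: extend $-\pa^2 g_P/\pa w^2$ to a signed Radon measure (via Riesz representation) and invoke uniqueness of the Lebesgue decomposition into absolutely continuous and singular parts. Your caveat about generic $w$ is unnecessary here; the decomposition holds for every $w$, and the coefficients $w\cdot\nu_i$ play no role in separating the two pieces.
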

\begin{proof} 
Let $\de_{F_i} (\phi)=\int_{F_i}\phi(x)d\cH^{n-1}(x)$. 
It is easy to prove that  $(\pa 1_P / \pa w) (\phi)=-\sum_{i=1}^m w\cdot\nu_i\de_{F_i} (\phi)$. For instance,~\cite[p.60]{H} proves the corresponding formula for sets with $C^1$ boundary  and the formula for $P$ can be proved by an approximation argument. If $(P_n)$ is a sequence of convex bodies with $C^1$ boundary converging to $P$ in the Hausdorff metric, then  $(\pa 1_P / \pa w) (\phi)=\lim_n (\pa 1_{P_n} / \pa w) (\phi)$, by  dominated convergence Theorem~\cite[p.~20]{EG}. Thus, if $\nu_{P_n}$ denotes the outer normal to $\pa P_n$, we have 
\begin{align*}
\frac{\pa 1_P}{\pa w} (\phi)=\lim_n \frac{\pa 1_{P_n}}{\pa w} (\phi)
&=-\lim_n\int_{\pa P_n} w \cdot\nu_{P_n}(x)\ \phi(x)\  d\cH^{n-1}(x)\\
&=-\sum_{i=1}^m w\cdot\nu_i\ \de_{F_i} (\phi).
\end{align*}

Since $\pa 1_P / \pa w$ has compact support and $g_P=1_P\ast1_{-P}$ we can write
\begin{equation}\label{somma_dirac}
\begin{aligned}
\frac{\pa^2 g_P}{\pa w^2} (\phi)&=
\left(\frac{\pa1_P}{\pa w}\ast\frac{\pa1_{-P}}{\pa w}\right) (\phi)\\
&=-\sum_{i,j=1}^m w\cdot\nu_i\ w\cdot\nu_j\  (\de_{F_i}\ast\de_{-F_j}) (\phi).
\end{aligned}
\end{equation}
Assume that $F_i$ and $F_j$ are parallel and choose a Cartesian coordinates system so that $F_i\subset\{x\in\Real^n:x_2=0\}$ and $F_j\subset\{x:x_2=\al\}$, where $x=(x_1,x_2)\in\Real^{n-1}\times\Real$ and $\al\in\Real$. We have
\begin{align*}
(\de_{F_i}\ast\de_{-F_j}) (\phi)&=
\int_{F_i}\left(
\int_{-F_j}\phi(x+y) d\cH^{n-1}(y)
\right) d\cH^{n-1}(x)\\
&=\int_{\Real^{n-1}}1_{F_i}(x_1,0)\left(
\int_{\Real^{n-1}}1_{-F_j}(y_1,-\al) \phi(x_1+y_1,-\al) dy_1
\right) dx_1\\
&=\int_{\Real^{n-1}}\left(
\int_{\Real^{n-1}}1_{F_i}(x_1,0)1_{-F_j}(z_1-x_1,-\al) dx_1
\right)\phi(z_1,-\al) dz_1.\\
\end{align*}
Since $1_{F_i}(x_1,0)1_{-F_j}(z_1-x_1,-\al)=1_{F_i\cap(F_j+(z_1,-\al))}(x_1,0)$ and $F_i-F_j\subset\{x: x_2=-\al\}$, we have
\begin{equation}\label{dist_facce_parallele}
\begin{aligned}
(\de_{F_i}\ast\de_{-F_j}) (\phi)&=\int_{\Real^{n-2}}\cH^{n-1}(F_i\cap(F_j+(z_1,-\al)))\phi(z_1,-\al) dz_1\\
&=\int_{F_i-F_j}\cH^{n-1}(F_i\cap(F_j+z))\,\phi(z)\,d\cH^{n-1}(z).
\end{aligned}
\end{equation}

Assume $n\geq3$, that $F_i$ and $F_j$  are not parallel and choose a Cartesian coordinates system so that $F_i\subset\{x\in\Real^n:x_3=0\}$ and $F_j\subset\{x:x_1=\al x_3\}$, where $x=(x_1,x_2,x_3)\in\Real\times\Real^{n-2}\times\Real$ 
and $\al\in\Real$. We have
\begin{align*}
(\de_{F_i}\ast&\de_{-F_j}) (\phi)=\sqrt{1+\al^2}\int_{\Real\times\Real^{n-2}}1_{F_i}(x_1,x_2,0)\\
&\quad\left(
\int_{\Real^{n-2}\times\Real}1_{-F_j}(\al y_3,y_2,y_3)\,\phi(x_1+\al y_3,x_2+y_2,y_3)\, dy_2\,dy_3
\right)\, dx_1\,dx_2\\
&=\sqrt{1+\al^2}\int_{\Real^n}\left(
\int_{\Real^{n-2}}1_{F_i}(z_1-\al z_3,x_2,0)\,1_{-F_j}(\al z_3,z_2-x_2,z_3)\,dx_2
\right)\phi(z) dz,\\
\end{align*}
where $z$ denotes $(z_1,z_2,z_3)$ and in the last integral we have used the change of variable $(x_1,y_2,y_3)=(z_1+\al z_3,z_2-x_2,z_3)$.
Since $1_{F_i} (z_1-\al z_3,x_2,0) 1_{-F_j} (\al z_3,z_2-x_2,z_3) =1_{F_i\cap(F_j+z)} (z_1-\al z_3,x_2,0) $ and $F_i\cap(F_j+z)\subset\{x: x_1=z_1-\al z_3, x_3=0\}$, the inner integral in the last line of the previous formula equals $\cH^{n-2}(F_i\cap(F_j+z))$. Thus, since $\sqrt{1+\al^2}=(1-(\nu_i\cdot \nu_j)^2)^{-1/2}$,  we have
\begin{equation}\label{dist_facce_transversali}
(\de_{F_i}\ast\de_{-F_j}) (\phi)=
\frac1{\sqrt{1-{(\nu_i\cdot \nu_j)}^2}}
\int_{\Real^{n}}\cH^{n-2}(F_i\cap(F_j+z))\phi(z) dz.
\end{equation}
When $n=2$ formula~\eqref{dist_facce_transversali} is proved  as above by adapting the notations. The formulas \eqref{somma_dirac}, \eqref{dist_facce_parallele} and \eqref{dist_facce_transversali} imply \eqref{second_derivatives}.

By~\eqref{second_derivatives}, there exists $C\in\Real$ such that $-(\pa^2 g_P/\pa w^2,\phi)\leq C\sup_{\Real^n}|\phi|$ for each $\phi\in C^\infty_0(\Real^n)$. Therefore, by~\cite[Th.~2.1.6]{H}, the distribution $-\pa^2 g_P/\pa w^2$ has an unique extension to a bounded linear functional on $C_c(\Real^n)$, the space of functions on $\Real^n$ which vanish at infinity endowed with the supremum norm.
The Riesz representation Theorem~\cite[p.~49]{EG} implies the existence of a  Radon measure $\mu$ and a $\mu$-measurable function $\si$, with $|\si|=1$ $\mu$-almost everywhere, such that $-(\pa^2 g_P/\pa w^2,\psi)=\int_{\Real^n}\psi \,\si\,d\mu$ for each $\psi\in C_c(\Real^n)$. By Lebesgue decomposition Theorem~\cite[p.~42]{EG} the measure  $\mu$ has an unique decomposition $\mu=\mu_{ac}+\mu_s$, where $\mu_{ac}$ is absolutely continuous with respect to $\la_n$ and $\mu_s$ and $\la_n$ are mutually singular. The first sum in the right hand side of \eqref{second_derivatives} coincides with $\int\phi\,\si\,d\mu_{ac}$, while the second sum coincides with $\int\phi\,\si\,d\mu_s$. Both sums are thus uniquely determined by $g_P$.
\end{proof}

\begin{proof}[Proof of Proposition~\ref{rufibach}]
Let $F_i$, $\nu_i$ and $I_p$ be as in the statement of Lemma~\ref{deriv_distrib}.  Consider the distribution defined by the second sum in \eqref{second_derivatives}. This distribution determines its support  $S(P,w)=\cup_{(i,j)\in I_{p}:\nu_i\cdot w\neq0}(F_i-F_j)$ and
\begin{equation}\label{parte_singolare_dist}
\sum_{(i,j)\in I_{p}} w\cdot\nu_i\ w\cdot\nu_j\ 
\cH^{n-1}(F_i\cap(F_j+x)), 
\end{equation}
for each $x\in S(P,w)$. Clearly we have $S(P,w)\subset D\,P$.
If $F_i$ and $F_j$ are parallel and $i\neq j$ then $\nu_i=-\nu_j$ and, by \eqref{facce_corpodiff}, $F_i-F_j$ is the facet $(D\,P)_{\nu_i}$ of $D\,P$. Moreover $F_i-F_i\subset\nu_i^\perp$. Thus we have
\begin{equation}\label{supp_sing}
S(P,w)\cap \inte D\,P=\cup_{i:\nu_i\cdot w\neq0}(F_i-F_i)\subset\cup_{i:\nu_i\cdot w\neq0} \nu_i^\perp
\end{equation}
and $\cH^{n-1}(S(P,w)\cap \nu_i^\perp)>0$ when $\nu_i\cdot w\neq0$.

If $\cH^{n-1}(S(P,w)\cap w^\perp)=0$ then neither $F$ nor $G$ are facets of $P$ and $g_\tF+g_\tG\equiv0$. Now assume $\cH^{n-1}(S(P,w)\cap w^\perp)>0$. In this case $w$ coincides, up to the sign, with one of the $\nu_i$. 
If $x\in (w^\perp\setminus\cup_{\nu_j\neq\pm w} \nu_j^\perp)\cap \inte D\,P$ then, by \eqref{supp_sing}, the expression in \eqref{parte_singolare_dist} coincides with  $\cH^{n-1}(F\cap(F+x))+\cH^{n-1}(G\cap(G+x))=\cH^{n-1}(\tF\cap(\tF+x))+\cH^{n-1}(\tG\cap(\tG+x))=g_\tF(x)+g_\tG(x)$. On the other hand, if $x\in w^\perp\setminus D\:P$ we have $g_\tF(x)+g_\tG(x)=0$, because $P\cap(P+x)=\emptyset$ implies $F\cap(F+x)=\emptyset$ and
$G\cap(G+x)=\emptyset$. Since $g_\tF+g_\tG$ is continuous, this function is determined for all $x\in w^\perp$ by continuity.

Consider $(D\,P)_w$. If it is not contained in $S(P,w)$ then either $F$ or $G$ is not a facet of $P$ and $g_{\tF,\tG}\equiv0$. If it is contained then $F=F_i$, $G=F_j$, for some $i$ and $j$ with $i\neq j$, and $(D\,P)_w=F_i-F_j$. In this case the expression in \eqref{parte_singolare_dist} coincides with  $\cH^{n-1}(F_i\cap(F_j+x))$.    Knowing this function for each $x\in (D\,P)_w$ is equivalent to knowing $g_{\tF,\tG}$.
\end{proof}

\begin{lemma}\label{lemma_sistema_cov}
Let $F$, $F'$, $G$ and $G'$ be convex bodies in $\Real^n$ with $\inte F\neq\emptyset$.
If $g_F=\al g_{F'}$, for some $\al\neq0$,  then $\al=1$.
If 
\begin{equation}\label{system_cov}
\begin{cases}
&g_F+g_G=g_{F'}+g_{G'},\\
&g_{F,G}=g_{F',G'}
\end{cases}
\end{equation}
then  either  $g_{F}=g_{F'}$ and $g_{G}=g_{G'}$, or else $g_{F}=g_{G'}$ and $g_{G}=g_{F'}$.
\end{lemma}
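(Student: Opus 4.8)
The plan is to move to the Fourier side via \eqref{convolution} and \eqref{fourier}, and to use that the Fourier transform of a compactly supported function is real-analytic.

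For the first assertion I would read off two scalar invariants of a covariogram: $g_K(O)=\voln(K)$, and (integrating $g_K=1_K\ast1_{-K}$ and using Fubini) $\int_{\Real^n}g_K\,dx=\voln(K)^2$. Applying both to $g_F=\al g_{F'}$ gives $\voln(F)=\al\,\voln(F')$ and $\voln(F)^2=\al\,\voln(F')^2$. Since $\inte F\neq\emptyset$ forces $\voln(F)>0$, hence $\al>0$ and $\voln(F')>0$, dividing the second relation by the first yields $\voln(F)=\voln(F')$, and then $\al=1$.

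For the system \eqref{system_cov} I would argue on the Fourier side. By \eqref{support} the functions $g_F,g_G,g_{F'},g_{G'},g_{F,G},g_{F',G'}$ are continuous with compact support, so $\widehat{g_F},\widehat{g_G},\widehat{g_{F'}},\widehat{g_{G'}}$ are real-analytic on $\Real^n$. The first equation of \eqref{system_cov} reads $\widehat{g_F}+\widehat{g_G}=\widehat{g_{F'}}+\widehat{g_{G'}}$. The second equation, by \eqref{fourier}, reads $\widehat{1_F}\,\overline{\widehat{1_G}}=\widehat{1_{F'}}\,\overline{\widehat{1_{G'}}}$, and taking squared moduli (and \eqref{fourier} again) gives $\widehat{g_F}\,\widehat{g_G}=\widehat{g_{F'}}\,\widehat{g_{G'}}$. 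Hence $\widehat{g_F},\widehat{g_G}$ and $\widehat{g_{F'}},\widehat{g_{G'}}$ have the same sum and the same product, so on $\Real^n$
\[
(\widehat{g_F}-\widehat{g_{F'}})(\widehat{g_F}-\widehat{g_{G'}})
=\widehat{g_F}^2-\widehat{g_F}(\widehat{g_{F'}}+\widehat{g_{G'}})+\widehat{g_{F'}}\widehat{g_{G'}}
=\widehat{g_F}^2-\widehat{g_F}(\widehat{g_F}+\widehat{g_G})+\widehat{g_F}\widehat{g_G}=0 .
\]
Both factors are real-analytic on the connected set $\Real^n$; if the first is not identically zero it is nonzero on a dense set, where the second must vanish, forcing the second to vanish identically. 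So $\widehat{g_F}=\widehat{g_{F'}}$ or $\widehat{g_F}=\widehat{g_{G'}}$; since the Fourier transform is injective on continuous integrable functions, $g_F=g_{F'}$ or $g_F=g_{G'}$, and the first equation of \eqref{system_cov} then gives $g_G=g_{G'}$ or $g_G=g_{F'}$, respectively.

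I expect the only genuine difficulty to be this last step. Pointwise the two equations of \eqref{system_cov} tell us only that the unordered pair $\{\widehat{g_F}(\xi),\widehat{g_G}(\xi)\}$ equals $\{\widehat{g_{F'}}(\xi),\widehat{g_{G'}}(\xi)\}$ for each $\xi$, and continuity alone does not rule out a matching that switches with $\xi$ (elementary one-variable examples show this can happen for merely continuous functions). It is the real-analyticity of $\widehat{g_F}$ — available for free since $g_F$ has compact support — that excludes the mixed case; the rest is bookkeeping with the convolution and Fourier identities recorded in Section~\ref{definitions}.
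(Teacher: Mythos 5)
Your proposal is correct and follows essentially the same route as the paper: the first assertion from the two moment identities $g_K(O)=\voln(K)$ and $\int g_K=\voln(K)^2$, and the second by passing to the Fourier side, combining \eqref{fourier} with the equal sum and equal product of $|\widehat{1_F}|^2,|\widehat{1_G}|^2$ versus $|\widehat{1_{F'}}|^2,|\widehat{1_{G'}}|^2$, and invoking real-analyticity of the transforms of compactly supported functions to prevent the pointwise matching from switching with $\xi$. The one cosmetic difference is that you package the final step as the factorization $(\widehat{g_F}-\widehat{g_{F'}})(\widehat{g_F}-\widehat{g_{G'}})\equiv0$ and argue via density of the nonvanishing set of an analytic factor, whereas the paper phrases it as ``the alternative does not depend on $\xi$ because an analytic function is determined by its values on a set with a limit point''; these are the same argument.
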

\begin{proof}Observe that  if $K\subset\Real^n$ is a convex body then $\la_n(K)=g_K(0)$ and 
\[
\la_n^2(K)=\int_{\Real^n}\int_{\Real^n}1_K(y)1_K(y-x)\:dy\:dx=\int_{\Real^n}g_K(x)\:dx
\]
(see \eqref{fourier} and~\cite[p.~411]{Sc}). Thus the identity $g_F=\al g_{F'}$ implies $\la_n (F)=\al\la_n (F')$, $\la_n^2(F)=\al\la_n^2(F')$ and, as a consequence, $\al=1$.

Let us prove the second claim. Applying the Fourier transform to the equalities in \eqref{system_cov}  we arrive, with the help of \eqref{fourier}, to the system
\begin{equation*}
\begin{cases}
&\|\widehat{1_{F}}\|^2+\|\widehat{1_{G}}\|^2=
\|\widehat{1_{F'}}\|^2+
\|\widehat{1_{G'}}\|^2\\
&\|\widehat{1_{F}}\|^2
 \|\widehat{1_{G}}\|^2
=\|\widehat{1_{F'}}\|^2
 \|\widehat{1_{G'}}\|^2.
\end{cases}
\end{equation*}
Let $\xi\in\Real^n$ denotes the transform variable. For each $\xi\in\Real^n$, the previous system implies that  either we have
$\|\widehat{1_{F}}(\xi)\|=\|\widehat{1_{F'}}(\xi)\|$
and
$\|\widehat{1_{G}}(\xi)\|=\|\widehat{1_{G'}}(\xi)\|$
or else we have
$\|\widehat{1_{F}}(\xi)\|=\|\widehat{1_{G'}}(\xi)\|$
and
$\|\widehat{1_{G}}(\xi)\|=\|\widehat{1_{F'}}(\xi)\|$. The alternative a priori may depend on $\xi$.
The Fourier transform of a function with compact support is analytic (see~\cite[Th.~7.1.14]{H}) and therefore the squared moduli of the previous transforms are analytic. Since any analytic
function is determined by its values on a set with a limit point, we conclude that the previous alternative does not depend on $\xi$.
Going back to covariograms via Fourier inversion, this means that either $g_{F}=g_{F'}$ and $g_{G}=g_{G'}$, or else $g_{F}= g_{G'}$ and $g_{G}=g_{F'}$.
\end{proof}

\begin{remark}\label{rem_facce} Let $P$, $F$, $G$, $\tF$ and $\tG$ be as in Proposition~\ref{rufibach}. Assume $n=3$ and $F$ and $G$ facets,  let $P'$ be a convex polytope with $g_P=g_{P'}$ and let $F'=P'_w$, $G'=P'_{-w}$, $\tF'=F'\mathbin|w^\perp$ and $\tG'=G'\mathbin|w^\perp$. Proposition~\ref{rufibach}, Lemma~\ref{lemma_sistema_cov} and the positive answer to the covariogram problem in the plane imply that, possibly after a reflection of $P'$, $\tF'$ and $\tG'$ are translations or reflections respectively of $\tF$ and $\tG$. Ruling out the possibility that, say, $\tF=-\tF'\neq\tF'$ and $\tG=-\tG'\neq\tG'$ is a major difficulty in the proof of Theorem~\ref{teorema_lmr}, and to overcome it we need Theorem~\ref{cov_congiunto_poligoni}.
This possibility cannot be overcome when $n\geq 4$; see Remark~\ref{rem_facce_four}.
\end{remark}

\begin{proof}[Proof  of \eqref{t_lmr_facce} in Theorem~\ref{teorema_lmr}]
Let $F=P_w$, $G=P_{-w}$, $F'=P'_w$ and $G'=P'_{-w}$.
The relations \eqref{support} and \eqref{facce_corpodiff} imply
\begin{equation}\label{diff_facce}
F-G=(D\,P)_w=(D\,P')_w=F'-G'.
\end{equation}
Up to a translation of $P$ and $P'$, a reflection of $P'$ and an affine transformation, we may assume $w=(0,0,-1)$,
\begin{align}
&\dime F\geq \dime G,\quad\dime F'\geq \dime G',\\
&F, F'\subset\set{x:x_3=0},\quad G, G'\subset \set{x:x_3=1},\\
&\text{$s(F)=s(F')=O$ and $s(G)=s(G')=(0,0,1)$.}\label{steiner_points}
\end{align}
Here $x=(x_1,x_2,x_3)\in\Real^3$  and we have used \eqref{diff_facce} and the Minkowski-additivity of the Steiner point  (see~\cite[p.~42]{Sc}) to obtain  \eqref{steiner_points}.

Let $\ee>0$ and let $\tee=(0,0,-1+\ee)$.
We have $P\cap(P+\tee)\subset\set{x: 0\leq x_3\leq \ee}$. We study the asymptotic behaviour of the volume of this set 
as $\ee$ tends to $0^+$.

Let $\tG=G \mathbin| \{x:x_3=0\}$ and $\tG'=G'\mathbin| \{x:x_3=0\}$. Observe that $O\in\relint F\cap\relint\tG$ and $O\in\relint F'\cap\relint\tG'$, since  $s(F)=s(F')=s(\tG)=s(\tG')=O$ and the Steiner point of a convex body belongs to its relative interior; see~\cite[p.~43]{Sc}. 
According to the dimension of $F$ and $G$, we distinguish the following cases ($c$ denotes a positive constant which may vary from formula to formula).

\textit{Case 1: $F$ and $G$ are facets.}  We have  $g_P(\tee)=\ee\, \area(F\cap\tG)+o(\ee)$. Indeed $P\cap(P+\tee)$ coincides (up to polytopes of volume $o(\ee^2)$) with  the sum of the  polygon $F\cap\tG$ and  the segment $[O,(0,0,\ee)]$.

\textit{Case 2: $F$ is a facet and $G$ is an edge.} We have $g_P(\tee)=c\,\ee^2 \,\len(F\cap\tG)+o(\ee^2)$. Indeed $P\cap(P+\tee)$ coincides (up to polytopes of volume $o(\ee^3)$) with the sum of the segment $F\cap\tG$ and a triangle with edge lengths proportional to $\ee$ contained in a plane orthogonal to $F\cap\tG$.

\textit{Case 3: $F$ is a facet and $G$ is a vertex.} We have  $g_P(\tee)=c\,\ee^3$, since $P\cap(P+\tee)$ is a pyramid with edge lengths proportional to $\ee$.

\textit{Case 4: $F$ and $G$ are parallel edges.} We have $g_P(\tee)=c\,\ee^2  +o(\ee^2)$, because $P\cap(P+\tee)$ coincides  (up to polytopes of volume $o(\ee^3)$) with the sum of the segment $F\cap\tG$ and a quadrilateral with edge lengths  proportional  to $\ee$, contained in a plane orthogonal to $F\cap\tG$.

\textit{Case 5: $F$ and $G$ are non-parallel edges.} We have $g_P(\tee)=c\,\ee^3$, because $P\cap(P+\tee)$ is a tetrahedron with edge lengths proportional to $\ee$.

\textit{Case 6: $F$ is an edge and $G$ is a vertex.} We have $g_P(\tee)=c\,\ee^3$, because $P\cap(P+\tee)$ is  a polytope with edge lengths proportional to $\ee$.

\textit{Case 7: $F$ and $G$ are vertices}. This is the only case where $(D\,P)_w$ is a point.

In Case~3 we have $g_F+g_\tG=g_F\not\equiv0$ while, in Case~5, we have $g_F+g_\tG\equiv0$.

\bigskip
\begin{tabular}{cllll}
\toprule case &$F$, $G$ & main term of $g_P(\tee)$ & $(D\,P)_w$ & $g_F+g_\tG$\\
\midrule 
 1& facet, facet & $\area(F\cap\tG)\ \ee$ &  facet &\\
 2& facet, edge &$c\, \len(F\cap\tG)\, \ee^2$ & facet & \\
 3& facet, vertex &$c\, \ee^3$ & facet & $\not\equiv0$ \\
 4& parallel edges   & $c\, \ee^2$ & edge & \\
 5& non-parallel edges  & $c\, \ee^3$ & facet & $\equiv0$ \\
 6& edge, vertex   & $c\, \ee^3$ & edge & \\
 7& vertex, vertex   & & vertex  &\\
\bottomrule
\end{tabular}
\bigskip

The information summarised in the last three columns of this table is provided by $g_P$ (recall Proposition~\ref{rufibach} and \eqref{support}). This information distinguishes each case from the others. To conclude the proof it suffices to show that in each case we have $F=F'$ and $G=G'$, possibly after a reflection of $P'$ about $(0,0,1/2)$. 

Case 1. 
Proposition~\ref{rufibach} implies $g_{F,\tG}=g_{F',\tG'}$.
If $(F,\tG)$ and $(F',\tG')$ are trivial associates, then, possibly after a reflection of $P'$ about $(0,0,1/2)$, we have $F=F'+y$ and $\tG=\tG'+y$, for some $y\in\{x:x_3=0\}$. The assumption  \eqref{steiner_points} implies  $y=0$, because $s(F'+y)=s(F')+y$; see~\cite[p.~43]{Sc}. 

Now assume that $(F,\tG)$ and $(F',\tG')$ are not trivial associates. Theorem~\ref{cov_congiunto_poligoni} states that $(F,\tG)$ and $(F',\tG')$ are respectively trivial associates of $(\cT \cK_i, \cT \cL_i)$ and $(\cT \cK_j, \cT \cL_j)$, for some affine transformation $\cT$ and different indices $i,j$, with either $i,j\in\set{1,2}$ or $i,j\in\set{3,4}$. Proposition~\ref{rufibach} implies  $g_{\cT\cK_i}+g_{\cT\cL_i}=g_{\cT\cK_j}+g_{\cT\cL_j}$. Lemma~\ref{lemma_sistema_cov} and the positive answer to the covariogram problem in the plane~\cite{AB2} imply that either $\cK_i$ is a translation or a reflection of $\cK_j$ and $\cL_i$ is a translation or a reflection of $\cL_j$, or  else $\cK_i$ is a translation or a reflection of $\cL_j$ and $\cL_i$ is a translation or a reflection of $\cK_j$. This is clearly false.

Case 2. 
We have $g_F=g_F+g_\tG=g_{F'}+g_{\tG'}=g_{F'}$, which implies either $F=F'$ or $F=-F'$ (recall, again, \eqref{steiner_points}). When $F=F'$, \eqref{diff_facce} implies $G=G'$, since the Minkowski addition satisfies a cancellation law. When $F=-F'$, \eqref{diff_facce} implies
\begin{equation}\label{addendi}
F'+\tG=-F'+\tG'.
\end{equation}
We claim that $\tG=\tG'$ (and $F'=-F'$). Observe that \eqref{steiner_points} implies that $O$ is the midpoint of $\tG$ and of $\tG'$. Identity~\eqref{addendi} implies, for each $u\in S^1$,
\[
h_{F'}(u)-h_{-F'}(u)=h_{\tG'}(u)-h_{\tG}(u).
\]
The function in the right hand side is even, since $\tG$ and $\tG'$ are $o$-symmetric. The function in the left hand side is odd, since $h_{-F'}(u)=h_{F'}(-u)$. Thus both functions vanish and $\tG=\tG'$.
Again, \eqref{addendi} implies $F=F'$.

Cases 3, 6 and 7. 
In each of these cases $G$ and $G'$ are vertices and, by \eqref{steiner_points}, $G=G'=\{(0,0,1)\}$.
Identity \eqref{diff_facce} implies $F=F'$ too.

Case 4. 
The face $(D\,P)_w$ determines the direction of the edges $F$ and $G$ and the sum of their lengths, because $\len((D\,P)_w)=\len(F)+\len(G)$. Thus $F$, $G$, $F'$ and $G'$ are parallel and $\len(F)+\len(G)=\len(F')+\len(G')$.
On the other hand, if $q\in\{x: x_3=0\}$ is parallel to $(D\,P)_w$ we have  $g_P(\tee+q)=c\,\ee^2 \len(F\cap(\tG+q))+o(\ee^2)$, where the strictly positive constant $c$ does not depend on $q$ (it depends only on the ``openings'' of the dihedral cones $\cone(P,F)$ and $\cone(P,G)$). Thus we have
\[
c\,\len(F\cap(\tG+q))=c'\,\len(F'\cap(\tG'+q)),
\]
where the constant $c'$ may a priori differ from $c$.
The term $c\,\len(F\cap(\tG+q))$ coincides with $c\,\len(F\cap\tG)$  when $q$ satisfies $2\|q\|\leq \al: =\max(\len(F),\len(G))-\min(\len(F),\len(G))$, and it is strictly less than $c\,\len(F\cap\tG)$ when $2\|q\|>\al$. Similar considerations hold also for $c'\,\len(F'\cap\tG')$. Thus we have 
\[
\al=\max(\len(F'),\len(G'))-\min(\len(F'),\len(G')).
\]
Thus we have $F=F'$ and $G=G'$,
up to a reflection of $P'$ about $(0,0,1/2)$.

Case 5. The face $(D\,P)_w$ is a parallelogram and therefore has an unique decomposition as  Minkowski sum of two summands, except for the order of the summands. Therefore \eqref{diff_facce} implies  $F=F'$ and $G=G'$, up to a reflection of $P'$ about $(0,0,1/2)$.
\end{proof}


\section{The cross covariogram problem for cones}
Let $A$, $A'$, $B$ and $B'$ be convex polyhedral cones in $\Real^3$ with non-empty interior and such that
\begin{equation}\label{contesto_ab}
\begin{aligned}
&A, A', -B, -B'\subset \set{x\;:\;x_3\geq0},\\ 
&A\cap\set{x : x_3=0}=B\cap\set{x : x_3=0}=\{O\},\\
&A'\cap\set{x : x_3=0}=B'\cap\set{x : x_3=0}=\set{O},
\end{aligned}
\end{equation}
where $x=(x_1,x_2,x_3)\in\Real^3$.
Consider the polygons
\begin{equation}\label{def_F_G}
\begin{aligned}
&F=A\cap \set{x : x_3=1}\,,& &F'=A'\cap \set{x : x_3=1}\,,\\
&G=(-B)\cap \set{x : x_3=1}\,,& &G'=(-B')\cap \set{x : x_3=1}\:.\end{aligned}
\end{equation}
It is easy to see that in this setting $A-B=\conv(A\cup(-B))$ and $A'-B'=\conv(A'\cup(-B'))$. Therefore, $\gab=\gabp$ implies, by \eqref{support},
\begin{equation}\label{identity_conv}
 H:=\conv(F\cup G)=\conv(F'\cup G').
\end{equation}

\begin{proposition}\label{proposizione_coni}Let $A$, $A'$, $B$, $B'$, $F$, $F'$, $G$, $G'$ and $H$ be as above, and assume $g_{A,B}=g_{A',B'}$. Let $z_1,\dots,z_n$ denote the vertices of $H$ in counterclockwise order.  Assume that the following assumptions hold, for each $i=1,\dots,n$:
\begin{enumerate}
\item\label{pc_1} the point $z_i\in F\cap G$ if and only if $z_i\in F'\cap G'$;
\item\label{pc_2}  the segment $[z_i,z_{i+1}]$ is an edge of $F$ or of $G$ if and only if it is an edge of $F'$ or of $G'$;
\item\label{pc_3} if $z_i\notin F\cap G$, then the polygon, between $F$ and $G$, which contains $z_i$ coincides in a neighbourhood of $z_i$ with the polygon, between $F'$ and $G'$, which contains $z_i$. 
\end{enumerate}
Then either $A=A'$ and $B=B'$ or else $A=-B'$ and $B=-A'$.
\end{proposition}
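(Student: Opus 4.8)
I would descend to the two-dimensional slices of the four cones at height $1$ and reconstruct these slices from the restriction of suitable derivatives of $g_{A,B}$ to explicitly located cones, using the two devices quoted in the introduction: the analysis of the set where $g_{A,B}$ fails to be $C^3$, and the X-ray functions hidden in a second order mixed derivative of $g_{A,B}$. Since each of the four cones has apex $O$ and meets $\set{x_3=0}$ only there, we have $A=\pos F$, $-B=\pos G$, $A'=\pos F'$ and $-B'=\pos G'$; hence the conclusion is equivalent to $(F,G)=(F',G')$ or $(F,G)=(G',F')$, the second alternative being the case $A=-B'$, $B=-A'$ and reflecting the identity $g_{K,L}=g_{-L,-K}$. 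By \eqref{identity_conv} the polygon $H=\conv(F\cup G)=\conv(F'\cup G')$ and its cyclically ordered vertices $z_1,\dots,z_n$ are common to the two decompositions, and every $z_i$ is a vertex of $F$ or of $G$, and of $F'$ or of $G'$. Assumptions \eqref{pc_1}--\eqref{pc_3} then provide a bijection between the faces of $F$ and $G$ lying on $\pa H$ and those of $F'$ and $G'$ lying on $\pa H$, and identify the germ of $\pa F\cup\pa G$ along $\pa H$ with that of $\pa F'\cup\pa G'$. What remains is to recover the parts of $\pa F$ and $\pa G$ lying in $\inte H$, and to promote the boundary bijection to a \emph{global} identification, either $F\leftrightarrow F'$ and $G\leftrightarrow G'$, or $F\leftrightarrow G'$ and $G\leftrightarrow F'$.

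Next I would differentiate. Writing $g_{A,B}=1_A\ast1_{-B}$ and arguing as in the proof of Lemma~\ref{deriv_distrib} with the distributional identities $\pa 1_A/\pa w=-\sum_i(w\cdot\nu_i)\,\de_{A_i}$ and $\pa 1_{-B}/\pa v=-\sum_k(v\cdot\mu_k)\,\de_{-B_k}$ — where $A_i$, resp.\ $-B_k$, are the facets of $A$, resp.\ of $-B$, with outer unit normals $\nu_i$, resp.\ $\mu_k$, and $\de_{A_i}(\phi)=\int_{A_i}\phi\,d\cH^2$ — one gets
\[
\frac{\pa^2 g_{A,B}}{\pa w\,\pa v}=\sum_{i,k}(w\cdot\nu_i)(v\cdot\mu_k)\,\de_{A_i}\ast\de_{-B_k}.
\]
Split the sum into the contribution of the non-parallel pairs $(i,k)$, which is absolutely continuous with respect to $\la_3$ with density at $x$ proportional (up to a positive factor, as in Lemma~\ref{deriv_distrib}) to $\cH^1(A_i\cap(B_k+x))$, and the contribution of the parallel pairs, which is a measure carried by the planes through $O$ parallel to the common facets of $A$ and $-B$; as in Lemma~\ref{deriv_distrib}, these two parts are separately determined by $g_{A,B}$, hence equal to the corresponding parts for $(A',B')$ since $g_{A,B}=g_{A',B'}$. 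On a cone of $x$'s where a single non-parallel pair $(i,k)$ is active the a.c.\ density is, up to that factor, $\cH^1(A_i\cap(B_k+x))$, which along the line $\aff A_i\cap(\aff B_k+x)$ — whose direction is independent of $x$ — is the chord of the sector $A_i$ truncated by $B_k+x$; on the subcone where this truncation is inactive it is a genuine X-ray function of the cone $A$ (equivalently of $F$) in that fixed direction. Which facet produces it, and on which subcone, is read off from the non-smoothness set of $g_{A,B}$, which is the same for $(A',B')$.

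Running this over the non-parallel pairs yields, on explicit cones, X-ray functions of $A$ in a family of directions, and symmetrically of $-B$, and the equalities above force the same X-ray data for $A'$ and $-B'$. Feeding this, together with the boundary identification above, into the results on the determination of convex polyhedral cones by X-ray functions \cite{B3}, and handling the parallel pairs through the identity they satisfy — of the form treated in Lemma~\ref{coni_finale_shortened}, and, sliced along the common edges of $H$, of the form treated in Lemma~\ref{cov_cong_settori_angolari} and Remark~\ref{vettore_in_comune} — one pins down the global identification and concludes $(F,G)=(F',G')$ or $(F,G)=(G',F')$, which is the stated alternative.

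The delicate point is the last part of the second step: exhibiting, for each non-parallel facet pair, an open subcone of $x$'s on which the partial chord length $\cH^1(A_i\cap(B_k+x))$ is an untruncated X-ray of $A$. This requires exact control of the combinatorial type of $A\cap(B+x)$ there, i.e.\ of the piecewise-cubic cell decomposition of $\Real^3$ attached to $g_{A,B}$, and it is precisely here that the hypotheses \eqref{pc_1}--\eqref{pc_3} and the analysis of the non-smoothness set enter. Closely tied to it is the case analysis — which edges of $H$ are edges of $F$, of $G$, of both, or of neither; whether $\inte F\cap\inte G$ is empty; parallel versus transversal facets — and the proof that the swap $(A',B')\mapsto(-B',-A')$ can be chosen once and for all rather than edge by edge; this bookkeeping is where most of the work lies.
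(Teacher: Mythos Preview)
Your overall framework---reduce to the planar slices $F,G,F',G'$, extract X-ray/chord data from derivatives of $g_{A,B}$, and use the non-$C^3$ set to organize the combinatorics---matches the paper's strategy. But the execution diverges in two substantive ways, and one of them is a genuine gap.

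First, your differentiation scheme is not the paper's. You propose to expand $\pa^2 g_{A,B}/\pa w\,\pa v$ over all facet-facet pairs $(A_i,-B_k)$ and isolate individual terms $\cH^1(A_i\cap(B_k+x))$ on subcones where a single pair is active. The paper instead works locally near a vertex $z_i\notin F\cap G$: there $A\cap(B+x)=A\cap(L+x)$ for $L=\cone(B,R)$ the dihedral support cone at the corresponding edge $R$ of $B$, and Lemma~\ref{derivate_cono_diedro} differentiates in the two facet directions of $L$ to get directly the X-ray of $A$ along $R$. This is then converted, via \cite[Th.~1.3]{B3}, into the $-1$-chord function of $F$ (or $G$) at $z_i$ (Lemma~\ref{equal_chord_functions}). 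Your scheme may be salvageable, but the ``untruncated X-ray'' subcone you need is exactly the neighbourhood of $z_i$ the paper exploits, and the facet-pair decomposition is an unnecessary detour.

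Second---and this is the real gap---the heart of the paper's argument is a combinatorial dichotomy you relegate to ``bookkeeping'' in your last paragraph. Call $z_i$ \emph{concordant} if it lies in $(F\setminus G)\cap(F'\setminus G')$ or in $(G\setminus F)\cap(G'\setminus F')$, \emph{discordant} if the primed and unprimed assignments disagree, and \emph{neutral} if $z_i\in F\cap G\cap F'\cap G'$. The paper proves, using the equality $S^3(F,G)=S^3(F',G')$ of the non-$C^3$ sets (Lemma~\ref{discontinuita_c3}) together with the $-1$-chord equalities at non-neutral vertices (Lemma~\ref{equal_chord_functions}), that concordant and discordant vertices cannot coexist. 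This is not bookkeeping: the argument takes a hypothetical concordant $z_i$ and discordant $z_j$, observes that $[z_i,z_j]\subset S^3(F',G')$, and deduces via the structure of $S^3(F,G)$ that $\relint[z_i,z_j]$ must contain an interior vertex of $G$, whose position then contradicts the chord-function equality at $z_i$ or $z_j$. Once this dichotomy holds, the proof finishes with Lemma~\ref{secondo_lemma_unicita} (from \cite{B3}): if no vertex is discordant, the vertices of $H$ outside $F$ coincide with those outside $F'$, so $H=\conv\{F,z_{i_1},\dots,z_{i_s}\}=\conv\{F',z_{i_1},\dots,z_{i_s}\}$, and the equal $-1$-chord functions at the $z_{i_j}$ force $F=F'$; likewise $G=G'$.

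Finally, your invocation of Lemmas~\ref{coni_finale_shortened} and~\ref{cov_cong_settori_angolari} is misplaced: neither enters the proof of Proposition~\ref{proposizione_coni}. They are used elsewhere---Lemma~\ref{cov_cong_settori_angolari} in Lemma~\ref{spigoli_opposti_uguali} for antipodal parallel edges of $P$, and Lemma~\ref{coni_finale_shortened} in the final proof of Theorem~\ref{teorema0}.
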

To prove this result we need some preliminary lemmas.

\subsection{Covariogram, $X$-rays of cones and $-1$-chord functions of their sections}
The next lemma is the crucial result that connects covariogram and $X$-rays.

\begin{lemma}\label{derivate_cono_diedro}
Let $L\subset\Real^3$ be a dihedral cone, $R_0$ be its edge and $R_1$ and $R_2$ be its facets. For $i=1,2$, let $v_i\in S^2\cap \relint R_i$. If $\tilde A\subset\Real^3$ is a convex cone,  $\tilde A \cap L=\set{O}$, and $t$, $s\in\Real$ are chosen so that $A\cap(L+tv_1+sv_2)=\emptyset$ or $\inte A\cap(L+tv_1+sv_2)\neq\emptyset$, then
\begin{equation}\label{derivate_miste}
\frac{\pa^2}{\pa s\pa t} \volt(\tilde A\cap(L+tv_1+sv_2))=\al\  \len(\tilde A\cap (R_0+tv_1+sv_2)).
\end{equation}
Here $\al$ is a positive constant which does not depend  on $\tilde A$.
\end{lemma}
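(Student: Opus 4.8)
The plan is to compute the mixed second-order derivative directly, using a coordinate system adapted to the dihedral cone $L$. First I would set up coordinates so that the edge $R_0$ of $L$ lies along, say, the $x_1$-axis, and the two facets $R_1$, $R_2$ are spanned by $R_0$ together with two independent directions in the $(x_2,x_3)$-plane; equivalently, after an affine transformation in the $(x_2,x_3)$-variables (which changes $\volt$ only by a fixed positive Jacobian and scales lengths on lines parallel to $R_0$ by a fixed positive factor, hence only affects the constant $\al$), I may assume $L=\{(x_1,x_2,x_3): x_2\geq 0,\ x_3\geq 0\}$, with $R_0$ the $x_1$-axis, $v_1$ a positive multiple of $e_2$ and $v_2$ a positive multiple of $e_3$. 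Then translating $L$ by $tv_1+sv_2$ moves the "corner locus" $R_0$ to the line $\{x_2=t',x_3=s'\}$ for $t'=t'(t)$, $s'=s'(s)$ affine in $t,s$ with positive slopes. Writing $V(t,s)=\volt(\tilde A\cap(L+tv_1+sv_2))$, the point is that
\[
V(t,s)=\int_{\Real}\area\bigl(\tilde A_{x_1}\cap\{x_2\geq t',\ x_3\geq s'\}\bigr)\,d\len(x_1),
\]
where $\tilde A_{x_1}$ is the planar slice of $\tilde A$ at height $x_1$.

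Next I would differentiate under the integral. For a fixed slice, let $a(t',s')=\area\bigl(\tilde A_{x_1}\cap\{x_2\geq t',x_3\geq s'\}\bigr)$. Since $\tilde A_{x_1}$ is a convex (planar) set, $-\pa a/\pa t'$ is the length of the slice of $\tilde A_{x_1}$ by the line $\{x_2=t'\}$ intersected with $\{x_3\geq s'\}$, and differentiating once more in $s'$ picks out the single point $\{x_2=t',x_3=s'\}$: precisely, $\pa^2 a/\pa t'\pa s' = 1_{\tilde A_{x_1}}(t',s')$ for a.e.\ $(t',s')$ (this is just the statement that the "corner measure" of the region $\{x_2\geq t',x_3\geq s'\}$ is the Dirac mass at the corner, integrated against $1_{\tilde A_{x_1}}$). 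The hypothesis that $A\cap(L+tv_1+sv_2)$ is either empty or has the translate of $L$ meeting $\inte A$ is exactly what guarantees that $(t',s')$ is not a boundary point of any slice $\tilde A_{x_1}$, so that we are in the "generic" situation where these derivatives are honest pointwise identities rather than merely distributional ones, and the chain rule $\pa^2/\pa t\pa s = (dt'/dt)(ds'/ds)\,\pa^2/\pa t'\pa s'$ applies with a constant positive factor. Reassembling,
\[
\frac{\pa^2 V}{\pa t\,\pa s}(t,s)=\Bigl(\frac{dt'}{dt}\Bigr)\Bigl(\frac{ds'}{ds}\Bigr)\int_{\Real}1_{\tilde A_{x_1}}(t',s')\,d\len(x_1)
=\al\,\len\bigl(\tilde A\cap(R_0+tv_1+sv_2)\bigr),
\]
since $\{x_1:(x_1,t',s')\in\tilde A\}$ is exactly the intersection of $\tilde A$ with the line $R_0+tv_1+sv_2$, and $\al=(dt'/dt)(ds'/ds)$ times the affine Jacobian is a positive constant independent of $\tilde A$.

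The main obstacle is the justification of interchanging differentiation and integration and, more delicately, of treating $\pa^2 a/\pa t'\pa s'$ as a genuine pointwise equality: $a(t',s')$ is a concave-like function of $(t',s')$ but its mixed derivative can fail to exist exactly on the lines through vertices of the slices $\tilde A_{x_1}$, and as $x_1$ varies these exceptional lines sweep out a set that must be controlled. The hypothesis $\tilde A\cap L=\{O\}$ together with the non-degeneracy assumption on $A\cap(L+tv_1+sv_2)$ is what rules out the bad configurations: it forces the relevant corner $(t',s')$ to lie in the (relative) interior of the section $\tilde A\cap\{x_1=\text{const}\}$ for a.e.\ $x_1$ in the support, so the length function $x_1\mapsto \len(\tilde A\cap(R_0+tv_1+sv_2))$ is continuous there and the formal computation is legitimate. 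I would carry this out by first establishing the identity for $t,s$ in the open region where $\inte A\cap(L+tv_1+sv_2)\neq\emptyset$ (where everything is smooth enough by a dominated-convergence / difference-quotient argument), then noting both sides are continuous and vanish on the region where $A\cap(L+tv_1+sv_2)=\emptyset$, so the identity extends by continuity to the closure of each region.
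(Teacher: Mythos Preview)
Your approach is essentially the same as the paper's: reduce by a linear change of coordinates to the model dihedral cone $\{x_2\ge 0,\ x_3\ge 0\}$ (the paper uses $\{x_1\ge 0,\ x_2\ge 0\}$), compute the mixed derivative there, and then track the Jacobian factors to recover the constant $\alpha$ in general. The only organisational difference is that the paper differentiates directly---$\partial_t\volt=-\area$ of a half-plane section, then $\partial_s\area=-\len$ of the line section---whereas you slice along $R_0$ first and differentiate each planar slice; by Fubini this is the same computation. One small point: your normalisation ``affine transformation in the $(x_2,x_3)$-variables'' fixes $L$ but does not by itself force $v_1,v_2$ to be multiples of $e_2,e_3$, since $v_i\in\relint R_i$ may have an $x_1$-component; the paper handles this by choosing the linear map $\mathcal A$ to send $v_1,v_2$ to the standard basis vectors simultaneously with normalising $L$, and you should do the same.
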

\begin{proof}
Assume $L=\set{x : x_1\geq0, x_2\geq0}$, $v_1=(1,0,0)$ and $v_2=(0,1,0)$. Standard calculus arguments prove the formulas
\begin{align*}
\frac{\pa}{\pa t}\volt(\tilde A\cap\set{x: x_1\geq t,x_2\geq s})
&=-\area(\tilde A\cap\set{x: x_1=t, x_2\geq s}),\\
\frac{\pa}{\pa s}\area(\tilde A\cap\set{x: x_1=t, x_2\geq s})&= -\len(\tilde A\cap\set{x: x_1=t, x_2=s}),
\end{align*}
whenever $A\cap\set{x: x_1=t, x_2= s}=\emptyset$ or $\inte A\cap\set{x : x_1=t, x_2= s}\neq\emptyset$.
These identities imply \eqref{derivate_miste} with $\al=1$. In the general case the result follows from a reduction to the previous  one via a non-degenerate linear transformation $\calA$ such that $\calA(L)=\set{x : x_1\geq0, x_2\geq0}$, $\calA(v_1)=(1,0,0)$ and $\calA(v_2)=(0,1,0)$. Indeed we have
\begin{align*}
\frac{\pa^2}{\pa s\pa t} \volt(\tilde A\cap(L+tv_1+sv_2))&= |\det \calA|\,\frac{\pa^2}{\pa s\pa t} \volt(\calA^{-1} (\tilde A)\cap\set{x: x_1\geq t, x_2\geq s})\\
&=|\det \calA|\, \len(\calA^{-1} (\tilde A)\cap\set{x: x_1=t, x_2=s})\\
&=|\det \calA|\,\|\calA(0,0,1)\|^{-1}\,\len(\tilde A\cap(R_0+tv_1+sv_2)).
\end{align*}
\end{proof}

\begin{lemma}\label{equal_chord_functions}
Assume there exists $i\in\{1,\dots,n\}$ such that $z_i\notin F\cap G$. Then the polygon, between $F$ and $G$, which does not contain $z_i$, and the polygon, between $F'$ and $G'$, which does not contain $z_i$ have equal $-1$-chord functions at $z_i$. 
\end{lemma}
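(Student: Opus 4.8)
The plan is to extract the $-1$-chord function of the relevant section from the covariogram by using Lemma~\ref{derivate_cono_diedro} together with the local structure of $H$ near the vertex $z_i$. First I would fix a vertex $z_i$ of $H$ with $z_i\notin F\cap G$; by symmetry of the roles of $F$ and $G$ we may suppose $z_i\in G\setminus F$, so that locally near $z_i$ the boundary of $H$ agrees with that of $G$, and the polygon "not containing $z_i$" is $F$ (respectively $F'$, since assumption~(i) of Proposition~\ref{proposizione_coni} forces $z_i\notin F'\cap G'$, and~(iii) forces $G'$ to agree with $G$ near $z_i$). Lift this back to the cones: $z_i$ corresponds to a ray of $-B$ (equivalently of $-B'$) that is \emph{not} in $A$ (equivalently not in $A'$), and in a neighbourhood of that ray $-B$ is a dihedral cone, say $L$, with edge $R_0$ the ray through $z_i$. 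The key point is that $A\cap L=\{O\}$ — more precisely, near the ray $R_0$ the cone $A$ is disjoint from $L$ — so Lemma~\ref{derivate_cono_diedro} applies with $\tilde A=A$, and likewise with $\tilde A=A'$ and the same $L$ (using that $-B$ and $-B'$ coincide near $R_0$, by~(iii)).

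Next I would translate $L$ by $tv_1+sv_2$ with $(v_1,v_2)$ a basis of directions into the two facets of $L$, so that for $(t,s)$ in a suitable range the hypotheses $A\cap(L+tv_1+sv_2)=\emptyset$ or $\inte A\cap(L+tv_1+sv_2)\neq\emptyset$ hold (this is where one uses that $z_i$ is a vertex of $H$: translating $L$ slightly into the relevant quadrant of directions keeps its intersection with $A$ either empty or full-dimensional, since $A$ is a cone meeting $L$ only at the origin). Then Lemma~\ref{derivate_cono_diedro} gives
\[
\frac{\pa^2}{\pa s\,\pa t}\,g_{A,-B}(\,\cdot\,)=\alpha\,\len\bigl(A\cap(R_0+tv_1+sv_2)\bigr),
\]
and the analogous identity for $A'$, $-B'$, $R_0$ with the \emph{same} constant $\alpha$ (it depends only on $L$, not on $\tilde A$). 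Since $g_{A,B}=g_{A',B'}$ by hypothesis, the left-hand sides agree (note $g_{A,-B}$ is just $g_{A,B}$ reparametrised — actually here I should be careful and work directly with $g_{A,B}(x)=\volt(A\cap(B+x))$, writing $B+x$ in the form needed; alternatively observe that $-B=L$ locally and $\volt(A\cap(B+x))=\volt(A\cap(-L-x))$, matching the shape in the lemma after a reflection). Hence $\len(A\cap(R_0+tv_1+sv_2))=\len(A'\cap(R_0+tv_1+sv_2))$ for all $(t,s)$ in the range. As $(t,s)$ varies, the lines $R_0+tv_1+sv_2$ sweep out precisely the lines through $z_i$ meeting the section $\{x_3=1\}$ in the cross-section, and the length of $A\cap$(such a line) is exactly the chord of $F$ cut by that line, weighted appropriately; carrying out the change of variables from $(t,s)$ to the line through $z_i$ in the plane $\{x_3=1\}$ shows this datum is the $-1$-chord function of $F$ at $z_i$ (the $\|x-z_i\|^{-2}$ weight arises from the Jacobian of the projection from the ray picture to the planar section, the standard perspective factor). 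The same computation gives the $-1$-chord function of $F'$ at $z_i$, and the two are equal.

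The step I expect to be the main obstacle is the bookkeeping that converts the parametrisation by $(t,s)$ — the two "facet directions" of the dihedral cone $L$ near $z_i$ — into the parametrisation of $F$ (or $F'$) by lines through $z_i$, and in particular verifying that $\len(A\cap\ell)$ for $\ell$ a line through the ray $R_0$, re-expressed on the section $\{x_3=1\}$, produces exactly the $\int_{F\cap\ell}\|x-z_i\|^{-2}\,d\len$ integrand rather than some other weight. This requires (a) checking that near $R_0$ the cone $-B$ really is dihedral with $R_0$ as edge — which follows because $z_i$ is a vertex of $H$ and $H$ agrees with $-B$ there, so the two edges of $H$ at $z_i$ span the cone; (b) the disjointness $A\cap L=\{O\}$ locally, which follows from $z_i\notin F$; and (c) the perspective Jacobian computation. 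Steps (a) and (b) are routine once the local picture is set up; (c) is the only genuinely computational part, and I would handle it by placing $z_i$ at a convenient point, parametrising $R_0$ by its point $z_i+ru$ on $\{x_3=1\}$ for a fixed unit $u$, and checking that $ds\,dt$ pulls back to $r^{-2}$ times arclength along the chord, which is the standard relation between the solid-angle measure seen from $z_i$ and Lebesgue measure on $F$.
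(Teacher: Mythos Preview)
Your strategy is essentially the paper's: localise $B$ (and $B'$) near the edge $R$ through $-z_i$ to the dihedral support cone $L=\cone(B,R)$, observe that $g_{A,B}(x)=\volt(A\cap(L+x))$ for $x$ near $z_i$, apply Lemma~\ref{derivate_cono_diedro} to obtain $\len(A\cap(l+y))=\len(A'\cap(l+y))$ for $y$ near $O$, and pass from this X-ray equality to equality of $-1$-chord functions of $F$ and $F'$ at $z_i$. The only substantive difference is the last step: the paper cites \cite[Th.~1.3]{B3} for the implication ``equal X-rays of the cones $A,A'$ in direction $l$ $\Rightarrow$ equal $-1$-chord functions of the sections $F,F'$ at $l\cap\{x_3=1\}$'', whereas you propose to carry out this conversion by hand.

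Your description of that conversion is muddled, however. The lines $R_0+tv_1+sv_2$ are \emph{not} ``lines through $z_i$'' --- they are all parallel to $l$ --- and $\len(A\cap(R_0+tv_1+sv_2))$ is not a chord of $F$. What you actually obtain is the X-ray of $A$ in the single direction $l$, a function on $l^\perp$. The correct passage to the $-1$-chord function is: for each line $m$ through $z_i$ in $\{x_3=1\}$, restrict to the plane $\Pi=\mathrm{span}(l,m)$; the X-ray of the $2$-dimensional cone $A\cap\Pi$ in direction $l$ is linear in the transverse coordinate with slope $c$, and an elementary similar-triangles computation gives $\int_{F\cap m}\|x-z_i\|^{-2}\,d\len(x)=c\cdot\kappa(m)$ with $\kappa(m)$ depending only on $m$ and $z_i$, not on the cone. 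This is the content of \cite[Th.~1.3]{B3} and is easy, but it is not the ``$(t,s)\mapsto$ line through $z_i$'' change of variables you sketch. One further point: your justification of $A\cap L=\{O\}$ needs more than $z_i\notin F$; the paper uses that $z_i$ is a vertex of $H$, so some plane $\pi\ni O$ strictly supports $H$ at $z_i$, whence $L=\cone(B,R)\subset\pi^+$ (since $B\subset\pi^+$ and $l\subset\pi$) while $A\cap\pi^+=\{O\}$.
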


\begin{proof} We prove, for instance, that if $z_i\in(G\setminus F)\cap (G'\setminus F')$ then $F$ and  $F'$ have the same $-1$-chord functions at $z_i$.
Let $R$ be the edge of  $B$ and $B'$ with the property that $z_i$ is collinear to $R$, and let $l$ be the line containing $z_i$ and $R$. The dihedral cones $L:=\cone(B,R)$ and $\cone(B',R)$ coincide, due to hypothesis \eqref{pc_3} in Proposition~\ref{proposizione_coni}.  We claim that 
\begin{equation}\label{AintersectL}
A\cap(B+x)=A\cap(L+x)\quad\text{and}\quad A'\cap(B'+x)=A'\cap(L+x),
\end{equation}
for each $x$ in a suitable neighbourhood $V$ of  $z_i$. 
Let $\pi$ be any plane through $O$ which strictly supports $H$ at $z_i$, and let $\pi^+$ be the closed halfspace bounded by $\pi$ not containing $H$. We have $A\cap\pi^+=\{O\}$ and $B\subset\pi^+$. Since $l\subset\pi$, we also have $B+z_i\subset\pi^+$ and $O\in R+z_i$. These arguments imply $A\cap(B+z_i)=\{O\}$. Therefore, when $x$ is close to $z_i$, we have $A\cap(B+x)=A\cap(L+x)$. Similar arguments prove  $A'\cap(B'+x)=A'\cap(L+x)$.
The identities \eqref{AintersectL} imply
\begin{equation*}
\volt(A\cap(L+x))=\gab(x)=\gabp(x)=\volt(A'\cap(L+x))\,,
\end{equation*}
for each $x\in V$.
The latter and Lemma~\ref{derivate_cono_diedro} imply  
\[
 \la_1(A\cap (l+y))=\la_1(A'\cap (l+y)),
\]
for all $y$ in a neighbourhood of $O$ such that $l+y$ meets $\inte A$ or does not meet $A$, and, moreover,  $l+y$ meets $\inte A'$ or does not meet $A'$. Since the left and the right hand side  in the previous formula are homogeneous functions of $y$ of degree $1$, and they are concave on their supports, the previous identity holds for all $y$, that is, $A$ and $A'$ have equal $X-$rays in the direction of $l$.
The passage from $X$-rays to  $-1$-chord functions comes from~\cite[Th.~1.3]{B3}, which proves that if two cones $A$ and $A'$ have the same $X$-rays in the direction of $l$  then  their sections $F$ and $ F'$ with the plane $\set{x:x_3=1}$  have the same $-1$-chord functions at $l\cap\set{x:x_3=1}$, that is, at  $z_i$.
\end{proof}

\subsection{The set of $C^3$ discontinuities of the covariogram}
\begin{lemma}\label{discontinuita_diedri}
Let $C\subset \Real^3$ be a dihedral cone with edge the $x_1$ axis, let  $D\subset \Real^3$ be a dihedral cone with edge the $x_2$ axis and assume that no facet of $C$ or of $D$ is contained in $\{x : x_3=0\}$. For $t\in\Real$, let
\[
g(t)=\volt\Big(A\cap(D+(0,0,t))\cap B(0,1)\Big).
\]
Then $d^3g/dt^3$ is discontinuous at $t=0$. More precisely, if both $C$ and $D$ meet both $\{x : x_3>0\}$ and $\{x : x_3<0\}$ then
\[
\lim_{t\to0^+}\frac{d^3g}{dt^3}(t)>\lim_{t\to0^-}\frac{d^3g}{dt^3}(t),
\]
while if $C$ meets both $\{x : x_3>0\}$ and $\{x : x_3<0\}$ and $D\subset\{x : x_3\geq 0\}$  then 
\[
\lim_{t\to0^+}\frac{d^3g}{dt^3}(t)<\lim_{t\to0^-}\frac{d^3g}{dt^3}(t).
\]
\end{lemma}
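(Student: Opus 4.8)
The plan is to collapse the three-dimensional volume to a one-variable integral by slicing orthogonally to the common height coordinate $x_3$, to show that the failure of $C^3$-smoothness is localised at the apex (the origin), and to read the sign of the jump off from the slopes of a few piecewise-linear functions. Fix coordinates as in the statement. Since $C$ is invariant under translations along the $x_1$-axis it is determined by its planar cross-section $\hat C=C\cap\{x_1=0\}$, a convex cone with apex the origin; likewise $D$ is determined by $\hat D=D\cap\{x_2=0\}$, and $D+(0,0,t)$ by $\hat D+(0,t)$. Writing $I_C(b)=\{x_2:(x_2,b)\in\hat C\}$ and $I_D(v)=\{x_1:(x_1,v)\in\hat D\}$ for the one-dimensional slices of these cross-sections — each an interval, a ray, or empty, positively homogeneous in the height, with endpoints that are piecewise linear in the height and have at most one break-point, at height $0$ — Fubini gives
\[
g(t)=\int_{-1}^{1}A(b,t)\,db,\qquad
A(b,t)=\area\!\left(\bigl(I_D(b-t)\times I_C(b)\bigr)\cap\{x_1^2+x_2^2\le 1-b^2\}\right).
\]
The hypothesis that no facet of $C$ or of $D$ lies in $\{x_3=0\}$ is exactly what makes those break-points genuine finite-slope corners (two distinct one-sided slopes in the straddling case, one slope and an empty side when the cone lies in $\{x_3\ge0\}$) rather than jump discontinuities.

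Next I would show that for every small $\delta>0$ the function $t\mapsto\int_{\{|b|\ge\delta\}}A(b,t)\,db$ is $C^\infty$ near $t=0$, so that the jump of $g'''$ at $0$ coincides with the jump of $\frac{d^3}{dt^3}\int_{-\delta}^{\delta}A(b,t)\,db$. Indeed, for $|b|\ge\delta$ and $|t|<\delta/2$ the endpoints of $I_C(b)$ and of $I_D(b-t)$ are real-analytic (in fact linear) in $(b,t)$, the relevant corner of the rectangle $I_D(b-t)\times I_C(b)$ lies strictly inside the disc of radius $\sqrt{1-b^2}$, and the area of such a truncated corner region is an analytic function of the corner position and of the radius on the range that occurs; hence $A(b,\cdot)$ is analytic near $0$, uniformly in such $b$. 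This isolates the apex: only the corners of $I_C$ and $I_D$ at height $0$ can contribute to the jump.

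It remains to compute that localised quantity. For $0<t<\delta$ one splits $\int_{-\delta}^{\delta}=\int_{-\delta}^{0}+\int_{0}^{t}+\int_{t}^{\delta}$; on each sub-interval $I_C(b)$ and $I_D(b-t)$ are given by a single linear formula (the argument having constant sign), so $A(b,t)$ is analytic in $(b,t)$ there and one may differentiate three times under the integral and across the moving endpoints $b=0$ and $b=t$. Letting $t\to0^+$ and collecting the surviving boundary contributions at $b=t$ (which appear with opposite signs from the two adjacent pieces and carry the one-sided slopes of the endpoint of $I_D$ at height $0$) and at $b=0$ (carrying those of $I_C$), one obtains a closed formula for $\lim_{t\to0^+}g'''(t)$; repeating with $-\delta<t<0$ gives $\lim_{t\to0^-}g'''(t)$. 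In the first case of the lemma all three sub-integrals are nontrivial for both signs of $t$, and after cancellation the difference of the one-sided limits reduces to a strictly positive multiple of a product of the two ``opening'' quantities of the dihedral angles of $C$ and $D$, whence $g'''(0^+)>g'''(0^-)$. In the second case $I_D(b-t)$ is empty precisely on $\{b<t\}$, so near the apex only $\int_t^\delta$ survives for $t>0$ while for $t<0$ the surviving range is $\{b>t\}$; the computation becomes one-sided and the sign reverses, giving $g'''(0^+)<g'''(0^-)$.

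The routine parts are the differentiations and the verification — which falls out of the same bookkeeping, or can be quoted from the known regularity of covariograms — that $g$, $g'$ and $g''$ are continuous at $0$, so that the third derivative is indeed the first one to jump. The main obstacle is the last step: tracking exactly which boundary terms survive three differentiations and the limit $t\to0$, and reducing the resulting combination to a manifestly signed expression. Getting the two signs right in the two cases is the entire content of the lemma.
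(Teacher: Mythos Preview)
Your slicing-and-localisation framework is sound in principle, but you stop exactly where the lemma begins: you assert that ``after cancellation the difference of the one-sided limits reduces to a strictly positive multiple of a product of the two opening quantities'' without actually doing the bookkeeping, and you yourself flag that ``getting the two signs right in the two cases is the entire content of the lemma.'' Until that computation is written out and the sign is exhibited, this is a plan rather than a proof. A secondary worry is that in the straddling cases the slices $I_C(b)$ and $I_D(b-t)$ are half-lines, so the ``rectangle'' is a quarter-plane and the area $A(b,t)$ is an arc-corner region whose third $t$-derivatives you would need to track carefully through the moving endpoints $b=0$ and $b=t$; this is doable but fiddly, and it is easy to lose a sign.

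The paper sidesteps all of this with a short geometric reduction that you may find instructive. First it treats the base case $C\subset\{x_3\le0\}$, $D\subset\{x_3\ge0\}$: then $C\cap(D+(0,0,t))$ is empty for $t>0$ and a tetrahedron with edge lengths proportional to $|t|$ for small $t<0$, so near $0$ one has $g(t)=-\alpha t^3\,1_{(-\infty,0]}$ for some $\alpha>0$, and the jump of $g'''$ is immediate with the claimed sign. The other cases are reduced to this one by a complement-to-halfspace trick: if, say, $C$ straddles and $D\subset\{x_3\ge0\}$, choose a dihedral cone $C'\subset\{x_3\le0\}$ with the same edge so that $C\cup C'$ is a halfspace $\pi^+$; then $g(t)=\lambda_3(\pi^+\cap(D+(0,0,t))\cap B(0,1))-\lambda_3(C'\cap(D+(0,0,t))\cap B(0,1))$, the first term is $C^3$, and the second is governed by the base case, which flips the sign. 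Iterating once more handles the doubly straddling case. This avoids any explicit third-derivative computation and makes the sign transparent.
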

\begin{proof}
Assume $C\subset\{x : x_3\leq 0\}$ and $D\subset \{x : x_3\geq 0\}$. In this case $C\cap (D+(0,0,t))$ is empty when $t>0$, and it is a tetrahedron of edge lengths proportional to $|t|$ when $t<0$ and $|t|$ is small. Thus
\[
g(t)=-\al t^3 1_{(-\infty,0]},
\]
for each $t$ in a neighbourhood of $0$ and for some $\al>0$, and we have $\lim_{t\to0^+}d^3g(t)/dt^3>\lim_{t\to0^-}d^3g(t)/dt^3$. 

Now assume that $C$ meets both $\{x : x_3>0\}$ and $\{x : x_3<0\}$, while $D\subset \{x : x_3\geq 0\}$. Let $C'\subset\{x : x_3\leq0\}$  be a closed dihedral cone with edge the $x_1$ axis such that $\inte C\cap \inte C'=\emptyset$ and  $C\cup C'$ is an halfspace $\pi^+$. Clearly 
\[
g(t)=\volt(\pi^+\cap(D+(0,0,t))\cap B(0,1))-\volt (C'\cap (D+(0,0,t))\cap B(0,1)).
\]
Since the first term in the right hand side of the formula is a $C^3$ function of $t$ and since the previous case applies to the second term, we have $\lim_{t\to0^+}d^3g(t)/dt^3<\lim_{t\to0^-}d^3g(t)/dt^3$.

Now assume that both $C$ and $D$ meet both $\{x : x_3>0\}$ and $\{x : x_3<0\}$. Let $D'\subset\{x : x_3\geq0\}$  be a closed dihedral cone with edge the $x_2$ axis such that $\inte D\cap \inte D'=\emptyset$ and  $D\cup D'$ is an halfspace. Arguing as above one writes $g(t)$ as a $C^3$ function minus $\volt(C\cap(D'+(0,0,t))\cap B(0,1))$. Since the previous case applies to this last function, we have $\lim_{t\to0^+}d^3g(t)/dt^3>\lim_{t\to0^-}d^3g(t)/dt^3$.
When both $C$ and $D$ are contained in $\{x : x_3\geq 0\}$ similar ideas prove the claim.
\end{proof}

\begin{lemma}\label{discontinuita_c3}
Let $A$ and $B$ be convex polyhedral cones in $\Real^3$ with non-empty interior satisfying \eqref{contesto_ab}, let $S^3(A,B)=\cl\set{x\in\Real^3\;:\;\text{$\gab$ fails to be $C^3$ at $x$}}$ and $E(A,B)=\{R+T\;:\;\text{$R$ is an edge of $A$ and $T$ is an edge of $-B$}\}$. Then
\[
E(A,B)\subset S^3(A,B)\subset \pa A \cup(-\pa B)\cup E(A,B).
\]
\end{lemma}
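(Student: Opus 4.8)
The plan is to analyze the local behaviour of $g_{A,B}$ near a generic point $x_0$ and show that, away from $\pa A\cup(-\pa B)\cup E(A,B)$, the function is $C^3$, while on $E(A,B)$ it genuinely fails to be $C^3$. For the second inclusion $S^3(A,B)\subset \pa A\cup(-\pa B)\cup E(A,B)$, I would take $x_0\notin \pa A\cup(-\pa B)$. Then either $x_0\notin A-B=\supp g_{A,B}$ (so $g_{A,B}\equiv 0$ near $x_0$, hence $C^\infty$), or $x_0\in\inte(A-B)$. In the latter case, $A\cap(B+x_0)$ is a bounded polytope and for $x$ near $x_0$ the intersection $A\cap(B+x)$ varies by sliding the facets of $B+x$; the volume $g_{A,B}(x)$ is piecewise polynomial in $x$, with the pieces delimited by the affine hyperplanes where the combinatorial type changes. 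A change of combinatorial type near $x_0$ occurs exactly when $x_0$ lies on a set of the form $R+T$ with $R$ an edge of $A$ and $T$ an edge of $-B$ (the locus where an edge of $A$ meets an edge of $B+x$), or on $\pa A+(\text{vertex of }-B)$-type sets which are contained in $\pa A$ (resp. $-\pa B$), already excluded. So off $E(A,B)\cup\pa A\cup(-\pa B)$ the function $g_{A,B}$ agrees near $x_0$ with a single polynomial, hence is $C^\infty$ there; this gives the right-hand inclusion. The routine but slightly delicate point is to check that the only ``new'' singular locus coming from edge–edge incidences is $E(A,B)$, and that all the other transition hyperplanes are already inside $\pa A\cup(-\pa B)$; this uses $\dim A=\dim B=3$ and~\eqref{contesto_ab}.

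For the first inclusion $E(A,B)\subset S^3(A,B)$, fix an edge $R$ of $A$ and an edge $T$ of $-B$ and a point $x_0\in R+T$; by translating we may assume $x_0$ is not on $\pa A$ or $-\pa B$ otherwise (generic point of $R+T$), and by applying a linear transformation we may arrange that $R$ is parallel to the $x_1$-axis and $T$ is parallel to the $x_2$-axis. Near $x_0$, $A$ coincides (up to a halfspace that contributes only a $C^3$ term, handled as in the proof of Lemma~\ref{discontinuita_diedri}) with the dihedral cone $\cone(A,R)$, and $B+x$ coincides near the relevant point with $\cone(-B,T)+x$, translated. Reducing to a ball and translating only in the $x_3$-direction through $x_0$, the function $t\mapsto g_{A,B}(x_0+(0,0,t))$ becomes, up to $C^3$ error terms and up to the linear change of coordinates, exactly the function $g(t)$ of Lemma~\ref{discontinuita_diedri} with $C=\cone(A,R)$ and $D=\cone(-B,T)$ (the hypothesis~\eqref{contesto_ab} guarantees that no facet of these dihedral cones lies in $\{x_3=0\}$). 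Lemma~\ref{discontinuita_diedri} then gives that $d^3g/dt^3$ jumps at $t=0$, so $g_{A,B}$ is not $C^3$ at $x_0$; taking the closure yields $R+T\subset S^3(A,B)$, hence $E(A,B)\subset S^3(A,B)$.

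The main obstacle I anticipate is the bookkeeping in the first inclusion: one must argue carefully that near a point $x_0\notin \pa A\cup(-\pa B)\cup E(A,B)$ the combinatorial structure of $A\cap(B+x)$ is locally constant, so that $g_{A,B}$ is locally a single polynomial. This requires identifying \emph{all} the walls across which the combinatorial type can change — vertex-of-one-against-facet-of-the-other, edge-against-edge, etc. — and checking that every such wall not of edge–edge type lies in $\pa A\cup(-\pa B)$. The reduction in the second inclusion to Lemma~\ref{discontinuita_diedri} is comparatively routine, the only care being the ``halfspace trick'' to strip off the non-dihedral parts of $A$ and $-B$ while controlling the error as a $C^3$ function, exactly as already done inside the proof of Lemma~\ref{discontinuita_diedri}.
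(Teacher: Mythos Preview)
Your plan for the right-hand inclusion $S^3(A,B)\subset\pa A\cup(-\pa B)\cup E(A,B)$ is essentially the paper's argument: off that set the combinatorial type of $A\cap(B+x)$ is locally constant, hence $g_{A,B}$ is locally polynomial.

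For the left-hand inclusion $E(A,B)\subset S^3(A,B)$ there is a genuine gap. The plane $\pi$ spanned by a given $R+T$ may contain \emph{other} edges of $A$ and of $-B$ (for instance two opposite edges of $A$, or an entire facet of $A$). When that happens, a generic point $x_0\in R+T$ may lie simultaneously in several sectors $R'+T'\subset\pi$, and these overlaps have full two-dimensional measure in $\pi$, so ``generic'' does not avoid them. As $x$ crosses $\pi$ through such an $x_0$, several edge--edge incidences occur at once. Your localization isolates only one of them via $\cone(A,R)$ and $\cone(-B,T)$ and declares the remainder $C^3$; but the remainder still carries the other edge--edge singularities and is \emph{not} $C^3$. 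A priori the several third-derivative jumps could cancel. Moreover, if $\pi$ contains a facet of $A$ (or of $B$), then one of your dihedral cones has a facet in $\pi$ and Lemma~\ref{discontinuita_diedri} does not apply as stated.

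The paper handles this by writing $g_{A,B}=g_0+\sum g_{i,j}$ near $x_0$, where each $g_{i,j}$ is the volume contribution from a small ball around the $(i,j)$-th edge--edge intersection point and $g_0$ is the rest, which is genuinely $C^3$. It then uses the \emph{sign} statement in Lemma~\ref{discontinuita_diedri}, not just the existence of a jump: when $\pi$ contains no facet of $A$ or of $B$, every relevant dihedral cone meets both sides of $\pi$, so all the jumps in $d^3/dt^3$ have the same sign and cannot cancel. When $\pi$ contains a facet, the paper argues separately that $g_{A,B}$ already fails to be $C^2$ (or even $C^1$) at $x_0$. Your ``halfspace trick'' is the right local picture at a single intersection point, but the clean formulation is to restrict to small balls around \emph{all} such points, as the paper does, and then control the signs.
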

\begin{proof}
Let $W\subset\Real^3\setminus\big(\pa A \cup(-\pa B)\cup E(A,B)\big)$ be a connected set. When $x\in W$ neither the vertex $x$ of $B+x$ belongs to $\pa A$, nor the vertex $O$ of $A$ belongs to $\pa B+x$, nor an edge of $A$ intersects an edge of $B+x$. Thus the combinatorial structure of $\pa(A\cap(B+x))$ does not change in $W$. Since the vertices of $A\cap(B+x)$ are smooth functions of $x$, for each $x\in W$, so is $g_{A,B}$. This proves the inclusion $S^3(A,B)\subset \pa A \cup(-\pa B)\cup E(A,B)$.

We prove $E(A,B)\subset S^3(A,B)$. The set $E(A,B)$ is contained in a finite set $\mathcal E$ of planes through $O$ whose intersection with $E(A,B)$ has dimension $2$. For $\pi\in\mathcal E$ let
\begin{multline*}
E_0(\pi)=\{R:\text{$R\subset\pi$ is an edge of $A$}\}\cup\{T:\text{$T\subset\pi$ is an edge of $-B$}\}\cup\\
\cup\{(R+T)\cap\pi: \text{$R$ and $T$ are edges of $A$ or of $-B$ not contained in $\pi$}\}.
\end{multline*}
It suffices to prove  $E(A,B)\setminus\cup_{\pi\in\mathcal E}E_0(\pi)\subset S^3(A,B)$, since $\cl(E(A,B)\setminus\cup_{\pi\in\mathcal E}E_0(\pi))=E(A,B)$ (because $\cup_{\pi\in\mathcal E}E_0(\pi)$ is a finite union of rays) and $S^3(A,B)$ is closed.

Let $\pi\in\mathcal E$, $x_0\in\pi\cap(E(A,B)\setminus E_0(\pi))$ and let $v\in S^2$ be orthogonal to $\pi$. Assume that $\pi$ contains two different edges $R_1$ and $R_2$ of $A$,  two different edges $T_1$ and $T_2$ of $B$, and $\pi$ does not contain any facet of $A$ or of $B$.  The choice  $x_0\in\pi\cap E(A,B)$ implies that at least two of the  sets $R_i\cap (T_j+x_0)$, $i,j=1,2$, are non-empty. Assume, for instance, that all four sets are non-empty and  let $\{p_{i,j}\}=R_i\cap (T_j+x_0)$. For $\ee>0$ sufficiently small, $i,j=1,2$ and $x$ in a neighbourhood  of $x_0$, let
\begin{align*}
g_0(x)&=\volt\Big(A\cap(B+x)\cap\big(\Real^3\setminus\cup_{i,j=1}^2 B(p_{i,j},\ee)\big)\Big),\\
g_{i,j}(x)&=\volt\big(A\cap(B+x)\cap B(p_{i,j},\ee)\big).
\end{align*}
Clearly $g_{A,B}=g_0+\sum_{i,j=1}^2g_{i,j}$. The choice $x_0\notin E_0(\pi)$ implies that  $x_0\notin\pa A$, $O\notin(\pa B+x_0)$ and that the points $p_{i,j}$ are the only intersections of edges of $A$ with edges of $B+x_0$. Thus, arguments similar to those used in the first part of the proof imply $g_0$  $C^3$ in a neighbourhood of $x_0$. Moreover, Lemma~\ref{discontinuita_diedri} proves that $\sum_{i,j=1}^2 g_{i,j}$ fails to be $C^3$ at $x_0$. 
Similar arguments prove that $g_{A,B}$ fails to be $C^3$ at $x_0$ when $\pi$ does not contain any facet of $A$ or of $B$ and $\pi$ contains one or two edges  of $A$ and one or two edges of $B$.

Now assume that $\pi$ contains a facet $R$ of $A$, two different edges $T_1$ and $T_2$ of $B$, and $\pi$ does not contain any facet of $B$. The choice  $x_0\in\pi\cap E(A,B)$ implies that at least one between  $\len(R\cap (T_1+x_0))$ and $\len(R\cap (T_2+x_0))$ is positive. Assume, for instance, that both terms are positive. For $\ee>0$ sufficiently small, $j=1,2$ and $x$ in a neighbourhood of $x_0$, let  $W_j=\cup_{p\in R\cap (T_j+x_0)}B(p,\ee)\setminus B(x_0,\ee)$,
\begin{align*}
g_0(x)&=\volt\Big(A\cap(B+x)\cap\big(\Real^3\setminus(B(x_0,\ee)\cup W_1\cup W_2)\big)\Big),\\
g_{j}(x)&=\volt\big(A\cap(B+x)\cap W_j\big),\\
g_3(x)&=\volt\big(A\cap(B+x)\cap B(x_0,\ee)\big).
\end{align*}
Clearly $g_{A,B}=\sum_{j=0}^3 g_{j}$. The choice $x_0\notin E_0(\pi)$ implies that  $O\notin(\pa B+x_0)$ and that the intersections of edges of $A$ with edges of $B+x_0$ are contained in $W_1\cup W_2$. Thus  $g_0$ is $C^3$ in a neighbourhood of $x_0$. Arguing as in the proof of Lemma~\ref{discontinuita_diedri} proves the following formulas, valid for suitable $\al_1,\al_2,\be>0$:
\begin{gather*}
\left(\lim_{t\to0^+}-\lim_{t\to0^-}\right)\sum_{j=1,2}\frac{d^2g_{j}}{dt^2}(x_0+tv)= \sum_{j=1,2}\al_j\big(\len(R\cap (T_j+x_0))-\ee\big); \\
\left(\lim_{t\to0^+}-\lim_{t\to0^-}\right)\frac{d^2g_{3}}{dt^2}(x_0+tv)\geq-\ee\be.
\end{gather*}
Thus $\sum_{j=1}^3g_{j}$ fails to be $C^2$ at $x_0$ when $\ee>0$ is small enough.
Similar arguments prove that $g_{A,B}$ fails to be $C^2$ at $x_0$ when $\pi$ contains a facet $A$ and one edge (but no facet) of $B$, and also prove that $g_{A,B}$ fails to be $C^1$ at $x_0$ when $\pi$ contains a facet of $A$ and a facet of $B$.
\end{proof}

\begin{remark}
The set $\pa A\cup(-\pa B)$ is not necessarily contained in $S^3(A,B)$, even if generically it is. For instance, let 
\[
A=\pos\conv\{(1,1,1),(1,-1,1),(-1,1,1),(-1,-1,1)\}
\]
and let $B$ be any cone which satisfies \eqref{contesto_ab} and with a face $T$ contained in $\{x: x_2=0\}$.  Then $g_{A,B}$ is $C^3$ at any point $x_0\in\relint T$. This claim relies ultimately on the smoothness of the function $g(t)=\la_3(A\cap\{x:x_2\geq t,x_3\leq1\})=(2/3-t+t^3/3)1_{[-1,1]}$ in $(-1,1)$.
\end{remark}

\subsection{Proof of Proposition~\ref{proposizione_coni}}

We recall a result proved in~\cite{B3}.
\begin{lemma}[\cite{B3}]\label{secondo_lemma_unicita}
Let $K$ and $K'$ be convex polygons  with equal $-1$-chord functions at  $p_1,p_2,\dots,p_s\in \Real^2\setminus K$. If
$\conv(K,p_1,\dots,p_s)=\conv(K',p_1,\dots,p_s)
$
then $K=K'$.
\end{lemma}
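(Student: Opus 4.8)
The plan is to deduce that $\pa K=\pa K'$, whence $K=K'$ since a planar convex body is the convex hull of its boundary. Fix one of the points, say $p=p_i$, and work in polar coordinates centred at $p$. Since $p\notin K$, for each unit vector $u=u(\theta)$ the set $K\cap(p+\Real_{\geq0}u)$ is a segment (possibly empty or a point), from the near endpoint at distance $r_a(u)$ from $p$ to the far endpoint at distance $r_b(u)$, and the value of the $-1$-chord function of $K$ at $p$ along this line equals $\chi^p_K(u):=1/r_a(u)-1/r_b(u)$. The elementary computation underlying everything is that along any arc of directions for which the near boundary of $K$ seen from $p$ is contained in one fixed edge, lying on a line $\{x:x\cdot n=c\}$ with $c>0$, one has $1/r_a(u)=u\cdot(n/c)$; similarly $1/r_b(u)$ has the same affine form along arcs where the far boundary lies on one fixed edge. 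Hence $\chi^p_K$ is a continuous function of $\theta$ which on each of finitely many arcs is of the form $\theta\mapsto\rho\cos(\theta-\varphi)$, the breakpoints being exactly the directions from $p$ towards the vertices of $K$; moreover $\chi^p_K$ has a jump discontinuity precisely at each direction $u$ for which $p+\Real_{\geq0}u$ contains an edge $[v_-,v_+]$ of $K$, of size $1/\len[p,v_-]-1/\len[p,v_+]$. All of this is determined by $\chi^p_K$, which by hypothesis equals $\chi^p_{K'}$.

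Next I would pin down the portion of the boundary that $K$ and $K'$ must share a priori. Put $Q=\conv(K\cup\{p_1,\dots,p_s\})=\conv(K'\cup\{p_1,\dots,p_s\})$ and let $\Gamma$ be $\pa Q$ with the relative interiors of all edges of $Q$ having an endpoint among the $p_j$ removed. A vertex of $Q$ not equal to some $p_j$ is necessarily a common vertex of $K$ and of $K'$, because the vertices of $Q$ lie among those of $K$ together with $\{p_j\}$, and also among those of $K'$ together with $\{p_j\}$. Using this, together with the jump information from the previous step to handle the case in which an edge of $K$ (or of $K'$) is contained in the relative interior of an edge of $Q$ — where the length of that edge is forced by the size of the jump of $\chi^{p_j}$ at the corresponding tangent direction — one checks that $\pa K\cap\pa Q=\pa K'\cap\pa Q=\Gamma$. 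Thus $\Gamma$ is a known union of arcs common to $\pa K$ and $\pa K'$.

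The core step is a propagation. Say a point of $\pa K$ is \emph{determined} if it is known to lie on $\pa K'$ and to coincide there; initially the determined set is $\Gamma$. If $p_j$ and $u$ are such that the far endpoint $q$ of $K$ along $p_j+\Real_{\geq0}u$ is already determined, then $r_b^K(u)=\len[p_j,q]$ is known, hence so is $r_a^K(u)=\big(\chi^{p_j}_K(u)+1/r_b^K(u)\big)^{-1}$, and the near endpoint $p_j+r_a^K(u)u$ is a point of $\pa K$ which the identical computation for $K'$ identifies with the corresponding point of $\pa K'$; symmetrically, a determined near endpoint yields the far one. (When the ray leaves $Q$ through $\Gamma$ the far endpoint is automatically determined, since then $r_b^K(u)=r_b^Q(u)$.) Iterating produces an increasing family of arcs, and one must show its union is all of $\pa K$. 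The geometric input here is that every $x\in\pa K\setminus\Gamma$ lies in $\inte Q$, so by a separating-line argument some $p_j$ lies strictly on the outer side of a supporting line of $K$ at $x$; that is, $x$ is the near endpoint of $K$ along the ray from $p_j$ through $x$.

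The hard part is to rule out that the propagation stalls. I would argue by contradiction: let $B\subseteq\pa K$ be the (relatively open) set of points that are never determined, and suppose $B\neq\emptyset$; then $B\subseteq\pa K\setminus\Gamma$. For $x\in B$ and any $p_j$ seeing $x$ as a near endpoint, the far endpoint $\sigma_{p_j}(x)$ of $K$ along the ray from $p_j$ through $x$ must also lie in $\overline B$ — otherwise $r_b^K$, then $r_a^K$, then $x$ itself, would have been determined using a neighbourhood of that direction. Thus $\overline B$ is invariant under all the partially defined ``antipodal'' involutions $\sigma_{p_j}$ of $\pa K$, while $\Gamma\subseteq\pa K\setminus B$. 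An extremal/connectedness argument — considering a vertex of $\overline B$ that is extremal for a generic linear functional, which by the previous paragraph some $p_j$ must see on its near side, and tracking its antipodal image towards the exposed arc $\Gamma$ — should then yield the contradiction. I expect this last step, together with a careful treatment of the degenerate configurations (points $p_j$ in $\inte Q$, edges of $Q$ collinear with edges of $K$ or of $K'$, tangencies), to be where essentially all of the difficulty lies; the reduction to the local structure of $\chi^p$ and the identification of $\Gamma$ are comparatively routine. Once $\pa K=\pa K'$ is established, $K=K'$ is immediate.
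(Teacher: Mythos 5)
You are proving a result that this paper does not actually prove; Lemma~\ref{secondo_lemma_unicita} is cited from the external reference~\cite{B3}, so there is no in-paper proof to compare against. On its own merits, your sketch has the right ingredients — the cosine-linear decomposition of $\chi^p_K$, the identification of a common starting arc, and the near$\leftrightarrow$far propagation are all correct — but the argument has two genuine gaps, the first of which you yourself flag.

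First, the termination argument is not a proof. You reduce to showing that a nonempty closed set $\overline B\subseteq\pa K\setminus\Gamma$, invariant under every $\sigma_{p_j}$, cannot exist, and propose to take a vertex $x$ of $\overline B$ that maximizes a ``generic'' linear functional $\ell$ and track $\sigma_{p_j}(x)$. But the natural functional at hand — the outer normal $n$ of $K$ at $x$, which is exactly the $\ell$ for which some $p_j$ with $\ell(p_j)>\ell(x)$ sees $x$ as a near endpoint — sends $\sigma_{p_j}(x)$ to a point with $\ell(\sigma_{p_j}(x))<\ell(x)$, which is \emph{consistent} with maximality, not contradictory. For a different $\ell$ whose maximum over $\overline B$ is at $x$, it is not clear that any $p_j$ with $\ell(p_j)<\ell(x)$ sees $x$ as near, so the contradiction $\ell(\sigma_{p_j}(x))>\ell(x)$ is not available either. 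Nothing in the sketch forces the iterated images to ``move towards $\Gamma$''; the maps $\sigma_{p_j}$ are central projections and do not respect any obvious Lyapunov quantity. This is exactly where the real content of the lemma lies, and it is missing.

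Second, the base case can be empty. Nothing in the hypotheses forbids $K$ (and then $K'$) from lying strictly inside $\conv\{p_1,\dots,p_s\}$, in which case $Q=\conv\{p_1,\dots,p_s\}$, every edge of $Q$ has an endpoint among the $p_j$, and $\Gamma\cap\pa K=\emptyset$ (your $\Gamma$ then consists only of the $p_j$, which lie off $\pa K$; the claim $\pa K\cap\pa Q=\Gamma$ is also literally false since $p_j\in\Gamma\setminus\pa K$). With no determined seed, the propagation never starts. Either you must establish uniqueness in that regime by a separate argument, or you must show that under the hypotheses this configuration is impossible; you do neither. Until these two points are addressed, the proposal is a plausible strategy rather than a proof.
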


\begin{proof}[Proof of Theorem~\ref{proposizione_coni}] 
Within this proof we say that $z_i$ is  \emph{neutral} if $z_i\in F\cap G\cap F' \cap G'$,
that $z_i$ is \emph{concordant} if $z_i\in (F\setminus G)\cap (F'\setminus G')$ or $z_i\in (G\setminus F)\cap (G'\setminus F')$, and that $z_i$ is  \emph{discordant} if $z_i\in (F\setminus G)\cap (G'\setminus F')$ or $z_i\in (G\setminus F)\cap (F'\setminus G')$.
Assumption~\eqref{pc_1} implies that this classification is exhaustive.

\begin{claim}
If one  vertex of $H$ is concordant then no vertex of $H$ is discordant.
\end{claim}

\begin{proof}
Lemma~\ref{discontinuita_c3}, when expressed in terms of $S^3(F,G):=S^3(A,B)\cap\{x : x_3=1\}$, implies
\begin{multline}\label{expression_graph}
\{[a,b]\,:\, \text{$a$ is a vertex of $F$ and $b$ is a vertex of $G$}\}\subset S^3(F,G)\subset\\
\subset\pa F\cup\pa G\cup\{[a,b]\,:\, \text{$a$ is a vertex of $F$ and $b$ is a vertex of $G$}\}.
\end{multline}
Moreover analogous inclusions hold for $S^3(F',G'):=S^3(A',B')\cap\{x : x_3=1\}$.  The identity $g_{A,B}=g_{A',B'}$  implies 
\begin{equation}\label{stre_uguali}
 S^3(F,G)=S^3(F',G').
\end{equation}
Assume the claim false and that $z_i$ is concordant and $z_j$ is discordant, for some $i$ and $j$. Assume also that this means, for instance,
\[
z_i\in (F\setminus G)\cap (F'\setminus G')\quad\text{and}\quad z_j\in (F\setminus G)\cap (G'\setminus F').
\] 
Hypothesis \eqref{pc_2}  implies that  a vertex of $H$ adjacent to a concordant vertex of $H$ is either concordant or neutral. Thus there exist $h$ and $k$ such that $z_h\in(z_i,z_j)_{\pa H}$, $z_k\in(z_j,z_i)_{\pa H}$ and both $z_h$ and $z_k$ are neutral. Without loss of generality, we may also assume  $z_m$ neutral, whenever $z_m\in(z_i,z_j)_{\pa H}$.

We have  $[z_i,z_j]\subset S^3(F',G')$, by \eqref{expression_graph} and because $z_i$ is a vertex of $F'$ and $z_j$ is a vertex of  $G'$. 
Since $z_h\in(z_i,z_j)_{\pa H}$ and $z_k\in(z_j,z_i)_{\pa H}$ are vertices of $F$ and of $G$,  $\relint[z_i,z_j]\cap\pa F=\emptyset$ and $\relint[z_i,z_j]\nsubseteq\pa G$. Therefore \eqref{expression_graph} and \eqref{stre_uguali} imply that $\relint[z_i,z_j]$ must contain a vertex $p$ of $G$. 

Since $z_k\in G$ and $z_m\in G$, whenever $z_m\in(z_i,z_j)_{\pa H}$, then $p$ is not contained in the convex envelope of these points, that is, $p$ is contained in $\inte \conv\{z_k,z_i,z_{i+1}\}$ or in $\inte \conv\{z_k,z_j,z_{j-1}\}$. Assume $p\in\inte \conv\{z_k,z_i,z_{i+1}\}$. 

We have  $[z_k,p]\subset S^3(F,G)$ and, by \eqref{stre_uguali}, $[z_k,p]\subset S^3(F',G')$. Let $q$ be the point of $\relint[z_i,z_{i+1}]$ collinear to $z_k$ and $p$. Since $z_k$ is a vertex of $F'$ and $[z_i,z_{i+1}]$ is an edge of $F'$, $\relint[z_k,q]\cap\pa F'=\emptyset$ and $z_k$ is the only vertex of $F'$ contained in $[z_k,q]$. Thus $[z_k,p]\subset S^3(F',G')$ and \eqref{expression_graph} implies
\[
[z_k,p]\subset\pa G'\cup\{[z_k,b] : \text{$b$ is a vertex of $G'$}\}.
\]
This is possible only if $[p,q]$ contains a vertex $p'$ of $G'$. Since $[z_j,p]\subset\conv\{z_j, z_k, p'\}$ and $z_j, z_k, p'\in G'$, we have $[z_j,p]\subset G'\cap[z_i,z_j]$. On the other hand, since
$p$, $z_h$ and $z_k$ are vertices of $G$ and $z_j\notin G$, we have $G\cap[z_i,z_j]\subsetneq [z_j,p]$. Therefore
\[
G\cap[z_i,z_j]\subsetneq F'\cap[z_i,z_j],
\]
and the $-1$-chord functions of $G$ and $G'$ at $z_i$ in the direction of $z_j-z_i$ differ. This violates the conclusion of Lemma~\ref{equal_chord_functions}. When $p\in\inte \conv\{z_k,z_j,z_{j-1}\}$ similar arguments give a contradiction, by proving that the $-1$-chord functions of $G$ and $F'$ at $z_j$  differ.
\end{proof}

To conclude the proof it suffices to show that either  $F=F'$ and $G=G'$ or else  $F=G'$ and $G=F'$.
If each $z_i$ is neutral then $F=F'=G=G'=H$. Now assume that no $z_i$ is discordant. 
Let $z_{i_1},\dots,z_{i_s}$, for some $s\geq0$, be the vertices of $H$ which are not vertices of $F$. These points are also the vertices of $H$ which are not vertices of $F'$, because no $z_i$ is discordant.   Since 
\[
H=\conv\{F,z_{i_1},\dots,z_{i_s}\}=\conv\{F',z_{i_1},\dots,z_{i_s}\},
\]
the identity $F=F'$ is a consequence of Lemmas~\ref{equal_chord_functions} and~\ref{secondo_lemma_unicita}. One proves $G=G'$ substituting $F$ with $G$ and  $F'$ with $G'$  in the previous arguments. When no $z_i$ is concordant a similar proof gives $F=G'$ and $G=F'$.
\end{proof}

\section{Determining the support cones of $P$: proof of \eqref{t_lmr_coni} in Theorem~\ref{teorema_lmr}}
The next lemma proves \eqref{t_lmr_coni} in a particular case and it is necessary also for its proof in the general case. The idea behind this proof is the following: when the cones to be determined are  support cones in antipodal parallel edges of $P$ of equal length, the problem is substantially two-dimensional and can be reduced to  the one studied in Lemma~\ref{cov_cong_settori_angolari}. 
\begin{lemma}\label{spigoli_opposti_uguali}
Assume that $S_1:=P_w$ and $S_2:=P_{-w}$ are parallel edges of $P$ of equal length, for some $w\in S^2$. Then  $S'_1:=P'_w$ and $S'_2:=P'_{-w}$ are  edges of $P'$ parallel to $S_1$ whose lengths equal  $\len(S_1)$. Moreover,
either $\cone(P,S_1)=\cone(P',S'_1)$ and $\cone(P,S_2)=\cone(P',S'_2)$ or else $\cone(P,S_1)=-\cone(P',S'_2)$ and $\cone(P,S_2)=-\cone(P',S'_1)$.
\end{lemma}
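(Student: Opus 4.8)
The plan is to reduce the statement to the two-dimensional cross covariogram problem solved in Lemma~\ref{cov_cong_settori_angolari}, the point being that $\cone(P,S_1)$ and $\cone(P,S_2)$ are dihedral cones sharing the line spanned by $S_1$.

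First I would invoke \eqref{t_lmr_facce} and Case~4 of its proof, which are already available: since $S_1=P_w$ and $S_2=P_{-w}$ are parallel edges, $S'_1=P'_w$ and $S'_2=P'_{-w}$ are edges of $P'$ parallel to $S_1$ with $\len(S'_1)=\len(S'_2)=\len(S_1)$, and after translating $P$ and $P'$, reflecting $P'$ if needed, and applying a linear change of coordinates we may assume $w=(0,0,-1)$, that $S_1$ is parallel to $(0,1,0)$, and (normalizing Steiner points as there) $S_1=S'_1=\{0\}\times[-a,a]\times\{0\}$, $S_2=S'_2=\{0\}\times[-a,a]\times\{1\}$ with $2a=\len(S_1)$. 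Since $S_1$ is an edge and $w\in\relint N(P,S_1)$, the cone $\cone(P,S_1)$ is dihedral with edge-line $\ell:=\{(0,t,0):t\in\Real\}$ and is contained in $\{x_3\ge0\}$; hence $C_1:=\cone(P,S_1)\cap\{x_2=0\}$ is a closed pointed convex cone with nonempty interior in the plane $\{x_2=0\}\cong\Real^2$, $C_1\cap\{x_3=0\}=\{O\}$, and $\cone(P,S_1)=\ell+C_1$. Define $C_2:=\cone(P,S_2)\cap\{x_2=0\}\subset\{x_3\le0\}$, and $C'_1,C'_2$ analogously for $P'$; all four satisfy the same properties, and $\inte C_1\cap\inte C_2=\inte C'_1\cap\inte C'_2=\emptyset$.

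The main computation is that $g_P$ near $(0,0,-1)$ recovers the planar cross covariogram $g_{C_1,C_2}$. For $\ee>0$ small, $x_1=O(\ee)$ and $x_2$ in a fixed compact subset of $(-2a,2a)$, the polytope $P\cap\big(P+(x_1,x_2,-1+\ee)\big)$ coincides, up to a set of volume $O(\ee^3)$, with the region whose extent along $(0,1,0)$ is the interval $[-a,a]\cap[x_2-a,x_2+a]$ (of length $(2a-|x_2|)_+$) and whose section by each plane $\{x_2=\text{const}\}$ is $C_1\cap\big(C_2+(x_1,\ee)\big)$; here one uses that, away from its two vertices, $P$ agrees near $S_1$ with $S_1+\cone(P,S_1)$ and $P+x$ agrees near $S_2+x$ with $(S_2+x)+\cone(P,S_2)$, the corner contributions near the four endpoints of $S_1$ and $S_2+x$ being of volume $O(\ee^3)$. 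Therefore
\[
g_P\big((x_1,x_2,-1+\ee)\big)=(2a-|x_2|)_+\,g_{C_1,C_2}\big((x_1,\ee)\big)+O(\ee^3),
\]
and the same identity holds for $P'$, $C'_1$, $C'_2$ with the same $a$. Dividing by $2a-|x_2|>0$, setting $x_1=u\ee$, letting $\ee\to0^+$, and using that $g_{C_1,C_2}$ is positively homogeneous of degree $2$ and vanishes off $\{x_3\ge0\}$ (as $C_1\subset\{x_3\ge0\}$ and $C_2\subset\{x_3\le0\}$), I get $g_{C_1,C_2}=g_{C'_1,C'_2}$ on all of $\{x_2=0\}$. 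Lemma~\ref{cov_cong_settori_angolari}, applied to $(A,B,A',B')=(C_1,C_2,C'_1,C'_2)$, then yields either $\{C_1,-C_2\}=\{C'_1,-C'_2\}$ or its exceptional case~(ii). In the first alternative, either $C_1=C'_1,C_2=C'_2$ or $C_1=-C'_2,C_2=-C'_1$; since $\cone(P,S_1)=\ell+C_1$, $\cone(P',S'_1)=\ell+C'_1$ and $-(\ell+C)=\ell+(-C)$, this is exactly the dichotomy claimed.

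The hard part is ruling out case~(ii) of Lemma~\ref{cov_cong_settori_angolari}. By Remark~\ref{vettore_in_comune} it forces exactly one of $\inte C_1\cap\inte(-C_2)$, $\inte C'_1\cap\inte(-C'_2)$ to be empty; by the symmetry $g_{K,L}=g_{-L,-K}$ I may assume $\inte C'_1\cap\inte(-C'_2)=\emptyset$. Then $C'_1$ and $-C'_2$ are convex cones in the closed half-plane $\{x_2=0,\,x_3\ge0\}$ meeting $\{x_3=0\}$ only at $O$ and with disjoint interiors, so there is a nonzero $\eta$ with $\eta_2=0$, $C'_1\subset\{\langle\cdot,\eta\rangle\le0\}$ and $-C'_2\subset\{\langle\cdot,\eta\rangle\ge0\}$; since $\ell\subset\{\langle\cdot,\eta\rangle=0\}$ this gives $\cone(P',S'_1)\subset\{\langle\cdot,\eta\rangle\le0\}$ and $\cone(P',S'_2)\subset\{\langle\cdot,\eta\rangle\le0\}$, i.e.\ $\eta\in N(P',S'_1)\cap N(P',S'_2)$. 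Then $S'_1\cup S'_2\subset(P')_\eta$, and because $S'_1$ and $S'_2$ lie in $\{x_1=0\}$ with midpoints $O$ and $(0,0,1)$ this forces $\eta=\pm(1,0,0)$ and $(P')_{\eta}$ to be a facet of $P'$ having $S'_1$ and $S'_2$ among its edges. On the other hand $\inte C_1\cap\inte(-C_2)\neq\emptyset$ means $\cone(P,S_1)$ and $-\cone(P,S_2)$ share an interior direction, and the same reasoning then shows $N(P,S_1)\cap N(P,S_2)=\{O\}$, so $P$ has no proper face containing both $S_1$ and $S_2$. To finish I would convert this asymmetry into a contradiction with $g_P=g_{P'}$: by Lemma~\ref{deriv_distrib} and the analysis in the proof of Proposition~\ref{rufibach}, the singular part of $\partial^2 g_P/\partial(1,0,0)^2$ detects the presence of a facet of $P'$ with outer normal $\pm(1,0,0)$, hence $P$ has such a facet too, and combining this with \eqref{t_lmr_facce} applied to the direction $(1,0,0)$ one sees that $P$ must carry a facet with normal $\pm(1,0,0)$ containing (a translate or reflection of) the two edges $S_1,S_2$, contradicting the previous sentence. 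Making this last matching precise while keeping track of the translation or reflection allowed by \eqref{t_lmr_facce} is, I expect, the delicate point of the whole argument.
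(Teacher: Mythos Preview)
Your reduction to the planar identity $g_{C_1,C_2}=g_{C'_1,C'_2}$ via the $O(\ee^2)$ asymptotic expansion and your appeal to Lemma~\ref{cov_cong_settori_angolari} are exactly the paper's argument; the setup and the computation match. The divergence is entirely in how you exclude case~(ii), and here you miss a one-line observation and replace it by an unfinished detour.

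The paper's exclusion of case~(ii) is this: pick $s_i\in\relint S_i$. Since $s_2\in P$ we have $s_2-s_1\in\cone(P,S_1)$, and since $s_1\in P$ we have $s_1-s_2\in\cone(P,S_2)$, so $s_2-s_1\in D_1\cap(-D_2)$. Its projection to $u^\perp$ has $x_3$-coordinate $1$, hence is a \emph{nonzero} element of $C_1\cap(-C_2)$; the identical argument applied to $P'$ gives a nonzero element of $C'_1\cap(-C'_2)$. But in case~(ii), one of the two pairs is a linear image of $(\F_2,\G_2)$, for which $\F_2\cap(-\G_2)=\{O\}$. So case~(ii) is impossible, and Remark~\ref{vettore_in_comune} (read with this slightly sharper observation) gives $\{C_1,-C_2\}=\{C'_1,-C'_2\}$ directly.

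Your alternative---separating $C'_1$ from $-C'_2$ by a hyperplane, producing a facet of $P'$ containing both $S'_1$ and $S'_2$, arguing that $P$ has no such facet, and then hoping to reach a contradiction through the singular part of $\partial^2 g_P/\partial(1,0,0)^2$ together with \eqref{t_lmr_facce}---is not only heavier but is left incomplete: you yourself flag the last matching step as ``the delicate point'', and making it rigorous would require tracking exactly which of the translations/reflections permitted by \eqref{t_lmr_facce} can occur, which is nontrivial. The convexity observation above makes the entire last paragraph unnecessary.
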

\begin{proof}Formula \eqref{t_lmr_facce} in Theorem~\ref{teorema_lmr} implies that $S'_1$ or $S_2'$ is an edge of $P'$ parallel to $S_1$ whose length is $\len(S_1)$. Assume, for instance, that $S'_2$ is such an edge. Apply again \eqref{t_lmr_facce} with the roles of $P$ and $P'$ exchanged. Either $S_1$ or $S_2$ is a translate of $S_1'$. Thus, also $S_1'$ is an edge of $P'$ parallel to $S_1$ whose  length is $\len(S_1)$.

Let $u\in S^2$ be the direction of $S_1$, $S_2$, $S_1'$ and $S_2'$, let $D_i=\cone(P,S_i)$, $D'_i=\cone(P',S'_i)$, $C_i=D_i\mathbin| u^\perp$ and $C'_i=D'_i\mathbin| u^\perp$, for $i=1,2$. We recall that $l_u$ denotes the line through $0$ parallel to $u$. Since $D_i$ and $D'_i$ are dihedral cones which coincide with $C_i+l_u$ and $C_i'+l_u$, respectively, the lemma is proved once we show that either $C_1=C'_1$ and $C_2=C'_2$ or else $C_1=-C'_2$ and $C_2=-C'_1$.

Let $y\in\Real ^3$ satisfies $S_1=S_2+y$. 
Since conditions \eqref{support} and  \eqref{facce_corpodiff} imply
$S_1-S_2=S_1'-S_2'$, we have $S_1'=S_2'+y$. If  $W_i$ denotes a sufficiently small  neighbourhood of $S_i$ then we can write
\begin{equation}\label{decomp_P}
P\cap W_i=\Big(\big( (C_i+ S_i)\cup E_i^1 \big)\setminus E_i^2\Big)\cap W_i\:,
\end{equation}
where $E_i^1$ and $E_i^2$ are unions of a finite number of convex cones and each of these cones is  contained in $D_i+S_i$, has apex an endpoint of $S_i$ and intersects the line $\aff(S_i)$ only in its apex.  In order to compute $g_P(y+\ee x)$, for $x\in u^\perp\cap S^2$ and $\ee>0$ small, we write
\begin{equation}\label{vol_locale}
P\cap (P+y+\ee x)=(P\cap W_1)\cap\big( (P\cap W_2)+y+\ee x\big)\:.
\end{equation}
Simple elementary calculations lead to the following formulas, for each $i,j=1,2$:
\begin{align*}
&\volt\big((C_1+ S_1)\cap\left(C_2+ S_2+y+\ee x\right)\big)=
\ee^2 \;\area\big(C_1\cap(C_2+x)\big)\;\len(S_1)\:;\\
&\volt\big((C_1+ S_1)\cap(E_2^i+y+\ee x)\big)\leq O(\ee^3)\:;\\
&\volt\big(E_1^i\cap(C_2+ S_2+y+\ee x)\big)\leq O(\ee^3)\:;\\
&\volt\big(E_1^j\cap(E_2^i+y+\ee x)\big)\leq O(\ee^3)\:.
\end{align*}
These formulas,  \eqref{decomp_P} and \eqref{vol_locale} imply
\[
g_P(y+\ee x)=\ee^2 \len(S_1)\;g_{C_1,C_2}(x)+O(\ee^3).
\]
The corresponding asymptotic expansion for $g_{P'}$ is proved by similar arguments. These expansions, the identity $g_P=g_{P'}$ and the homogeneity of degree $2$ of $g_{C_1,C_2}$ and $g_{C_1',C_2'}$  imply 
\begin{equation*}
g_{C_1,C_2}(x)=g_{C_1',C_2'}(x),
\end{equation*}
for each $x\in u^\perp$. Observe that $C_1\cap(-C_2)$ and $C_1'\cap(-C_2')$ are non-empty (it is an immediate consequence of the convexity of $P$ and $P'$) and apply Lemma~\ref{cov_cong_settori_angolari}, with $A$ substituted by $C_1$, $B$ by $C_2$, $A'$ by $C'_1$ and $B'$ by $C'_2$. This lemma implies $\{C_1,-C_2\}=\{C'_1,-C'_2\}$, as explained in Remark~\ref{vettore_in_comune}.
\end{proof}

Now we are ready to prove \eqref{t_lmr_coni} in the general case.

\begin{proof}[Proof of \eqref{t_lmr_coni} in Theorem~\ref{teorema_lmr}]
We distinguish three cases according to $\dim P_w$.

\textit{Case 1. $P_w$ is a facet}. 
In this case \eqref{t_lmr_coni} is an immediate consequence of \eqref{t_lmr_facce}. Indeed, we have
\begin{equation*}
 \cone(P,P_w)=\cone(P',P_w)=\pos\{w\}.
\end{equation*}

\textit{Case 2. $P_w$ is an edge}. 
By \eqref{t_lmr_facce}, we may assume $S:=P_w=P'_w$.
We prove that if $\cone(P,S)\neq\cone(P',S)$ then, for a suitable $y\in\Real^3$, $-S+y$ is an edge of $P'$ and 
\begin{equation}\label{coni_opp_uguali}
\cone(P,S)=-\cone(P',-S+y).
\end{equation}
This clearly implies $\cone(P,S)=\cone(-P'+y,S)$ and $P_w=(-P'+y)_w$. Thus,  \eqref{t_lmr_facce} and \eqref{t_lmr_coni} hold with $-P'+y$ replacing $P'$.
When $\cone(P,S)\neq\cone(P',S)$ we may assume, without loss of generality, the existence of a facet $R$ of $P$ containing $S$ such that $\aff(R)$ supports $P'$ and intersects $\pa P'$ only in $S$. Let $w_1$ be the unit outer normal to $P$ at $R$. We have $P_{w_1}=R$ and $P'_{w_1}= S$ and, therefore, $P'_{w_1}$ is not a translate of $P_{w_1}$. Formula~\eqref{t_lmr_facce} in Theorem~\ref{teorema_lmr},  with $w$ substituted by $w_1$, implies that $(-P')_{w_1}+y=R$, for some $y\in\Real^3$, that is, 
$-R+y$ is a facet of $P'$ with outer normal $-w_1$. In particular, since $S$ is an edge of $R$,  $-S+y$ is an edge of $P'$. Since $-w_1\in N(P',-S+y)$ and $w_1\in \inte N(P',S)$, $S$ and $-S+y$ are  antipodal parallel edges of $P'$ of equal length. Lemma~\ref{spigoli_opposti_uguali} implies \eqref{coni_opp_uguali}, 
since $\cone(P,S)\neq\cone(P',S)$.

\textit{Case 3: $P_w$ is a vertex}.
Up to a small perturbation of $w$, we may assume that also $P_{-w}$ is a vertex. Note that the validity of \eqref{t_lmr_coni} for the perturbed $w$  implies its validity for the original $w$ too. Conditions \eqref{facce_corpodiff} and \eqref{support} imply $P_w-P_{-w}=P'_{w}-P'_{-w}$. Therefore, $P'_{w}-P'_{-w}$ consists of one element, and this may happen only if both $P'_{w}$ and $P'_{-w}$ are points. 

Up to a translation of $P'$ and an affine transformation, we may assume $P_w=P'_w=(0,0,-1)$, $P_{-w}=P'_{-w}=(0,0,1)$,
\begin{equation}\label{coni_contesto}
\begin{aligned}
&P\cup P'\subset \set{x : -1\leq x_3\leq1},\\
&P\cap\set{x : x_3=-1}=P'\cap\set{x : x_3=-1}=\set{(0,0,-1)},\\
&P\cap\set{x : x_3=1}=P'\cap\set{x : x_3=1}=\set{(0,0,1)}.
\end{aligned}\end{equation}

Let $A=\cone(P,(0,0,-1))$, $B=\cone(P,(0,0,1))$, $A'=\cone(P',(0,0,-1))$ and $B'=\cone(P',(0,0,1))$. These cones satisfy \eqref{contesto_ab}. Since we have
\[
g_{A,B}(x)=g_P(x-(0,0,2))=g_{P'}(x-(0,0,2))=g_{A',B'}(x)\,,
\]
for each $x$ in a neighbourhood of $O$, and since $\gab(x)$ and $\gabp(x)$ are  homogeneous functions of degree $3$, we have 
\[
g_{A,B}=g_{A',B'}.
\]

Let $F$, $G$, $F'$, $G'$ and $H$ be defined as in \eqref{def_F_G} and \eqref{identity_conv}. We prove that the part of Theorem~\ref{teorema_lmr} expressed by formula \eqref{t_lmr_facce}  implies that these sets satisfy the assumptions of Proposition~\ref{proposizione_coni}. Once this is done, 
Proposition~\ref{proposizione_coni} implies that \eqref{t_lmr_coni} holds, possibly after a substitution of $P'$ with $-P'$.

\emph{Hypothesis \eqref{pc_3} is satisfied.} It suffices to show this when
\begin{equation}\label{appartenenza_z}
z_i\in(F\setminus G)\cap(F'\setminus G'),
\end{equation}
since in the other cases the proof is similar.
Condition \eqref{appartenenza_z}, when rephrased in terms of the respective cones, states that $\pos(z_i)$ is an edge of $A$, of $A'$ (and of $D:=\conv(A\cup(-B))$) which meets $-B$ and $-B'$ only at $O$. Let $w_1\in S^2\cap \inte N(D,\pos(z_i))$. When rephrased in terms of $P$ and $P'$, \eqref{appartenenza_z} implies that
\begin{equation}\label{nc1}
 (P')_{-w_1}\text{ is a point}
\end{equation}
(it coincides with $(0,0,1)$) and that $S:=P_{w_1}$ and $S':=P'_{w_1}$
are edges of $P$ and $P'$, respectively, with  endpoint $(0,0,-1)$. 

Let us prove $S=S'$. Assume the contrary.
Formula~\eqref{t_lmr_facce} in Theorem~\ref{teorema_lmr}, with $w$ substituted by $w_1$, implies that $S$ is a translate of $(-P')_{w_1}$ (because $S'$ is not a translate of $S$). This contradicts \eqref{nc1} and proves $S=S'$.

The coincidence of $F$ and $F'$ in a neighbourhood of $z_i$ is clearly equivalent to the identity 
\begin{equation}\label{coni_uguali_in-S}
\cone(P,S)=\cone(P',S).
\end{equation} 
Assume that this identity is false. Arguing as in the proof of Case~1, one shows that $-S+y$ is an edge of $P'$ and \eqref{coni_opp_uguali} holds, for a suitable $y\in\Real^3$.
The identity \eqref{coni_opp_uguali} imply $(P')_{-w_1}= -S'+y$ (because $w_1\in\cone(P,S)$), contradicts \eqref{nc1} and proves \eqref{coni_uguali_in-S}. 

\emph{Hypothesis \eqref{pc_1} is satisfied.} Assume $z_i\in F\cap G$. In this case  $\pos(z_i)$ is an edge of $A$, of $-B$ and of $D$. If $w_1\in S^2\cap\inte N(D,\pos(z_i))$, the latter implies that $P_{w_1}$ and $P_{-w_1}$ are antipodal parallel edges of $P$ with endpoints $(0,0,-1)$ and $(0,0,1)$, respectively.  Arguing as in the first lines of the proof of Lemma~\ref{spigoli_opposti_uguali}, one shows that also $P'_{w_1}$ and $P'_{-w_1}$ are edges parallel to $P_{w_1}$. 

Let $\pi$ denote the plane orthogonal to $w_1$ containing $\pos(z_i)$. Since $\pi$ supports $D=\conv(A\cup B)=\conv(A'\cup B')$ at $O$,  $\pi+(0,0,-1)$ supports both $P$ and $P'$ at $(0,0,-1)$, and  $\pi+(0,0,1)$ supports both $P$ and $P'$ at $(0,0,1)$. In particular, we have  $w_1\in N(P',(0,0,-1))$ and $-w_1\in N(P',(0,0,1))$. The latter and  \eqref{coni_contesto} imply that $(0,0,-1)$ and $(0,0,1)$ are endpoints of $P'_{w_1}$ and $P'_{-w_1}$, respectively.
When rephrased in terms of the support cones, this implies that $\pos(z_i)$ is an edge of $A'$ and of $-B'$ or, equivalently, that $z_i$ is a vertex of $F'$ and $G'$.

\emph{Hypothesis \eqref{pc_2} is satisfied.} Assume that $[z_i,z_{i+1}]$ is an edge of $F$. In this case $\pos(z_i)+\pos(z_{i+1})$ is a facet of $A$ and of $D$. If $w_1\in S^2\cap N(D,\pos(z_i)+\pos(z_{i+1}))$, then $P_{w_1}$ is a facet of $P$ with vertex $(0,0,-1)$ and $\pos(P_{w_1}+(0,0,1))=\pos(z_i)+\pos(z_{i+1})$. Arguing as we have done to prove that hypothesis~\eqref{pc_1} is satisfied shows that $(0,0,-1)$ is a vertex of $P'_{w_1}$ and  $(0,0,1)$ is a vertex of $P'_{-w_1}$. This and formula~\eqref{t_lmr_facce} in Theorem~\ref{teorema_lmr} imply that either $P_{w_1}=P'_{w_1}$ or  $-P_{w_1}=(-P')_{w_1}$. In the first case $[z_i,z_{i+1}]$ is an edge of $F'$, in the second case it is an edge of $G'$.
\end{proof}


\section{The structure of synisothetic polytopes}\label{sec_syn}

If $P$ is a translation or a reflection of  $P'$  then $(P,-P)$ is synisothetic to $(P',-P')$, but the converse implication is false.
\begin{example}\label{es_sinisotetici} 
Let $P\subset \Real^3$ be a convex polytope such that $\pa P$  contains a simple closed curve $\Gamma$ together with $-\Gamma$, with $\Gamma\cap(-\Gamma)=\emptyset$. The union $\Gamma\cup(-\Gamma)$ disconnects $\pa P$ in three  components. Let $\Sigma_1$ be the one bounded by $\Gamma$, $\Sigma_2$ the one bounded by $-\Gamma$ and $\Sigma_3$ the one bounded by $\Gamma\cup-\Gamma$. 
Choose $P$ in such a way that $\Sigma_1\neq -\Sigma_2$, $\Sigma_3\neq -\Sigma_3$,  $\pa P$ and $-\pa P$ coincide in a neighbourhood $W$ of $\Gamma$ and $W$ contains all faces which intersect $\Gamma$. 
Let $P'$ be the polytope whose boundary is $(-\Sigma_1)\cup \Sigma_3\cup(-\Sigma_2)$. The polytope $P'$ is not a translate or a reflection of $P$ but $(P,-P)$ is synisothetic to $(P',-P')$. 
\end{example} 

In order to introduce some notations, we need the following two lemmas.

\begin{lemma}\label{corpodiff}
Let $P$ and $P'$ be convex polytopes in $\Real^3$ with non-empty interior such that $(P,-P)$ and $(P',-P')$ are synisothetic. Then $DP=DP'$.
\end{lemma}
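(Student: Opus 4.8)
The plan is to prove that for every $u\in S^2$ the face $(DP)_u$ is a translate of $(DP')_u$; this will force $DP$ and $DP'$ to have the same normal fan, hence (by a standard reconstruction argument) to be translates of one another, and then the central symmetry of difference bodies turns ``translate'' into ``equal''.

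The ingredients I would use are elementary. By \eqref{facce_corpodiff}, $(DP)_u=P_u+(-P)_u$ and $(DP')_u=P'_u+(-P')_u$, where $(-P)_u=-P_{-u}$ and $(-P')_u=-P'_{-u}$; a face of a polytope $Q$ is determined by its normal cone, which is the polar cone of $\cone(Q,\cdot)$ (so isothetic faces have equal normal cones); and $\cone(-Q,-H)=-\cone(Q,H)$ for every face $H$ of $Q$. Consequently, if a face of $P$ is isothetic to a face of $P'$ their normal cones coincide, while if it is isothetic to a face of $-P'$ their normal cones are opposite, and likewise with $P$ and $P'$ interchanged.

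Now fix $u\in S^2$. Since $u\in\relint N(P,P_u)$, synisothesis together with the previous remark shows that $P_u$ is isothetic to the face of $P'$ or of $-P'$ whose normal cone contains $u$ in its relative interior, that is, $P_u$ is a translate of $P'_u$ or of $-P'_{-u}$. Applying the same reasoning to the face $(-P)_u=-P_{-u}$ of $-P$ shows that $-P_{-u}$ is a translate of $-P'_{-u}$ or of $P'_u$, and symmetrically for faces of $P'$. If $P_u$ is not a translate of $-P_{-u}$, then $P_u$ and $-P_{-u}$ must be matched to two \emph{distinct} members of $\{P'_u,-P'_{-u}\}$ (otherwise transitivity of ``is a translate of'' gives $P_u\cong -P_{-u}$), so $\{P_u,-P_{-u}\}$ coincides with $\{P'_u,-P'_{-u}\}$ up to translating each member, and therefore $(DP)_u=P_u+(-P_{-u})$ is a translate of $P'_u+(-P'_{-u})=(DP')_u$. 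If instead $P_u$ is a translate of $-P_{-u}$, applying the converse half of synisothesis to $P'_u$ and $-P'_{-u}$ shows that $P_u$, $-P_{-u}$, $P'_u$ and $-P'_{-u}$ are all translates of one another, so $(DP)_u$ and $(DP')_u$ are both translates of $2P_u$. In every case $(DP)_u$ is a translate of $(DP')_u$.

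It follows that $\dim(DP)_u=\dim(DP')_u$ for all $u$, and since the normal fan of a polytope is determined by the function $u\mapsto\dim(\cdot)_u$, the bodies $DP$ and $DP'$ have the same normal fan; thus each face $F$ of $DP$ corresponds to the face $F'$ of $DP'$ with $N(DP,F)=N(DP',F')$, and choosing $u$ in the common relative interior gives that $F=(DP)_u$ is a translate of $(DP')_u=F'$. A standard fact — walk along the edge graph and telescope edge vectors, using that the normal fan fixes the orientation of each edge — then yields $DP'=DP+c$ for some $c$. Finally $DP=-DP$ and $DP'=-DP'$, so $DP+c=-DP-c=DP-c$, whence $c=0$ and $DP=DP'$. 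I expect the delicate point to be the key step above, precisely the ``crossed'' configuration in which $P_u$ is matched to a face of $P'$ but $-P_{-u}$ to a face of $-P'$: one has to observe that this can occur only when $N(P,P_u)=-N(P,P_{-u})$, i.e.\ only in the degenerate case $P_u\cong -P_{-u}$ treated separately; the reconstruction step and the symmetry conclusion are routine.
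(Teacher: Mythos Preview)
Your argument is correct. Both you and the paper establish the same intermediate fact: for every $u\in S^2$, the face $(DP)_u$ is a translate of $(DP')_u$. The paper states this tersely (``each summand in the right hand side of the first identity also appears in the right hand side of the second identity and vice versa''), while you spell out the necessary casework explicitly, which is a genuine improvement in clarity.

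The difference lies in the final step. From the translate-equivalence of faces the paper extracts only $\area((DP)_w)=\area((DP')_w)$ and then invokes the uniqueness part of Minkowski's theorem on surface-area measures to conclude that $DP$ and $DP'$ are translates. You instead deduce that $DP$ and $DP'$ share the same normal fan and reconstruct one polytope from the other by walking the edge graph and telescoping edge vectors. Your route is more elementary, avoiding the appeal to Minkowski's theorem, at the modest cost of a combinatorial reconstruction argument; the paper's route is shorter but uses a heavier tool. Both then finish identically via the central symmetry of difference bodies.

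One small remark: your closing comment about the ``crossed'' configuration forcing $N(P,P_u)=-N(P,P_{-u})$ is muddled (the case $P_u\cong P'_u$, $-P_{-u}\cong -P'_{-u}$ is harmless and requires no such condition), but this is only a meta-comment and does not affect the soundness of the proof itself. Also, the claim that the normal fan is determined by $u\mapsto\dim(\cdot)_u$ and that the fan fixes edge orientations is correct but could use a one-line justification: for $u$ in $\relint N_1$ near the common facet $N_e$ of adjacent vertex cones $N_0,N_1$, one has $(v_1-v_0)\cdot u>0$, which pins down the sign.
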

\begin{proof}
The  second identity of \eqref{facce_corpodiff} implies
\begin{equation}\label{setting}
(DP)_w=P_w+(-P)_w\quad\text{and}\quad (DP')_w=P'_w+(-P')_w.
\end{equation}
The synisothesis of the two pairs implies that each summand in the right hand side of the first identity also appears in the right hand side of the second identity and vice versa. 
As a consequence we have $\area((DP)_w)=\area((DP')_w)$ for each $w\in S^2$. Therefore $DP$ and $DP'$ have the same $2$-area measure and, by the uniqueness assertion for the  Minkowski Problem~\cite[Th.~7.2.1]{Sc}, they are translates of each other. Since both difference bodies are origin symmetric, they coincide.
\end{proof}

In this section $P$ and $P'$ will always be as in the statement of Lemma~\ref{corpodiff}.
\begin{lemma}\label{lemma_setting}
For each $w\in S^2$  there exist $\si\in\{-1,1\}$ and $x=x(\si)\in \Real^3$ such that
\begin{align}
&\text{$P_w=(\si P')_w+x$ and $\cone(P,P_w)=\cone(\si P',(\si P')_w)$; }\label{alt1}\\
&\text{$P_{-w}=(\si P')_{-w}+x$ and $\cone(P,P_{-w})=\cone(\si P',(\si P')_{-w})$.}\label{alt2}
\end{align}
\end{lemma}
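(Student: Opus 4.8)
The plan is to use the synisothesis hypothesis to reduce to one of two ``unmixed'' configurations for the pair of exposed faces $P_w,P_{-w}$, and then to determine the common translation vector from the difference body. For the setup, recall that for $w\in S^2$ and a full-dimensional $P$ both $P_w$ and $P_{-w}$ are proper faces, so the notion of isothetic applies to them; recall also the elementary identities $(-P)_w=-(P_{-w})$ and $\cone(-P,-(P_{-w}))=-\cone(P,P_{-w})$ (and the analogues for $P'$), and that $DP=DP'$ by Lemma~\ref{corpodiff}. The second basic tool is the observation that if a proper face $F$ of $P$ is isothetic to a face $G$ of $P'$ or of $-P'$, then $\cone(P,F)=\cone(P',G)$ forces $N(P,F)=N(P',G)$; since a proper face $F$ is the exposed face $P_v$ exactly for $v$ in the relative interior of its normal cone, $G$ is then the exposed face of $P'$ (resp.\ of $-P'$) in the \emph{same} directions $v$. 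In particular the isothetic partner of $P_w$ is either $P'_w$ or $(-P')_w=-(P'_{-w})$, and that of $P_{-w}$ is either $P'_{-w}$ or $-(P'_w)$.

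First I would carry out the reduction. By synisothesis $P_w$ and $P_{-w}$ have isothetic partners, so at least one of the following holds: (i) $P_w$ isothetic to $P'_w$ and $P_{-w}$ to $P'_{-w}$; (ii) $P_w$ to $-(P'_{-w})$ and $P_{-w}$ to $-(P'_w)$; (iii) $P_w$ to $P'_w$ and $P_{-w}$ to $-(P'_w)$; (iv) $P_w$ to $-(P'_{-w})$ and $P_{-w}$ to $P'_{-w}$. The crux is that (iii) implies (i) and, symmetrically, (iv) implies (i). Indeed, in case (iii) the two isothesis relations give $\cone(P,P_{-w})=\cone(-P',-(P'_w))=-\cone(P',P'_w)=-\cone(P,P_w)$; applying the synisothesis hypothesis in the reverse direction to the face $P'_{-w}$ of $P'$, its partner among the faces of $P$ and $-P$ must be $P_{-w}$ or $(-P)_{-w}=-(P_w)$, and in either case one reads off that $P'_{-w}$ is a translate of $P_{-w}$ and that $\cone(P',P'_{-w})=\cone(P,P_{-w})$ --- which together with the data of (iii) is precisely (i). Case (iv) is handled identically, applying synisothesis in reverse to $P'_w$. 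Thus we may assume we are in case (i) or case (ii); put $\sigma=1$ in the first case and $\sigma=-1$ in the second. In both cases the isothesis relations assert exactly that $\cone(P,P_w)=\cone(\sigma P',(\sigma P')_w)$ and $\cone(P,P_{-w})=\cone(\sigma P',(\sigma P')_{-w})$, and that $(\sigma P')_w=P_w+a$, $(\sigma P')_{-w}=P_{-w}+b$ for suitable $a,b\in\Real^3$.

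It remains to show $a=b$, for then $x:=-a$ satisfies \eqref{alt1} and \eqref{alt2} simultaneously, the cone identities being already in hand. Since a reflection does not change the difference body, $D(\sigma P')=DP'=DP$, hence $(D(\sigma P'))_w=(DP)_w$. Using \eqref{facce_corpodiff} and the identity $(-Q)_w=-(Q_{-w})$ to evaluate the left-hand side as the Minkowski sum $(\sigma P')_w+(-(\sigma P')_{-w})=(P_w+a)+(-(P_{-w}+b))=(P_w-P_{-w})+(a-b)$, and the right-hand side as $P_w-P_{-w}$, we conclude that $P_w-P_{-w}$ is invariant under translation by $a-b$, and therefore $a=b$.

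The main obstacle is the reduction step. A priori the synisothesis hypothesis only matches $P_w$ and $P_{-w}$ with faces of $P'\cup(-P')$ \emph{separately}, and these two matches may be mixed (cases (iii) and (iv)); the substantive point is that a mixed match forces an unmixed one to exist, which is exactly where one must invoke synisothesis in the reverse direction, on a face of $P'$. Everything else is bookkeeping with support and normal cones, together with the single observation that a nontrivial translate of the polytope $P_w-P_{-w}$ differs from it.
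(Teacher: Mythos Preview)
Your proof is correct and follows essentially the same route as the paper's: both arguments use forward synisothesis to get an isothetic partner for $P_w$, invoke the reverse direction of synisothesis to handle the mixed case, and then exploit $DP=DP'$ together with \eqref{facce_corpodiff} to force the two translation vectors to coincide. The only difference is organizational: you enumerate four cases and show the mixed ones (iii), (iv) collapse to (i), while the paper fixes any $\sigma$ making \eqref{alt1} hold, applies reverse synisothesis directly to $(\sigma P')_{-w}$, and observes that if this face matches $(-P)_{-w}$ rather than $P_{-w}$ then both signs $\pm\sigma$ work for \eqref{alt1}, so one of them also works for \eqref{alt2}. Your treatment of case (iii), sub-case B is a bit terse---it uses implicitly that the hypotheses of (iii) already force $P_{-w}$ to be a translate of $-P_w$---but the logic is sound.
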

\begin{proof}
Condition~\eqref{alt1} explicitly expresses  the isothesis of $P_w$ and $(\si P)_w$, for some $\si\in\{-1,1\}$, and this holds by assumption. What we have to prove is that $P_{-w}$ is isothetic to $(\si P')_{-w}$ (with the same $\si$) and that the translation that carries $(\si P')_w$ into $P_w$ also carries $(\si P')_{-w}$ into $P_{-w}$.

We know that   $(\si P')_{-w}$ is isothetic either to $P_{-w}$ or to  $(-P)_{-w}$. In the first case the proof regarding $\si$ is concluded. Assume that the second possibility holds. This property is equivalent to the isothesis of $P_{w}$ and  $(-\si P')_{w}$, since for each convex polytope $Q$ we have 
\[
(-Q)_{-w}=-Q_w\quad\text{and}\quad
\cone(-Q,(-Q)_{-w})=-\cone(Q,Q_w).
\]
This and \eqref{alt1} imply  that $P_w$ is isothetic both to $(\si P')_{w}$ and to $(-\si P')_w$. Since isothesis is a transitive property, $(\si P')_{w}$ and  $(-\si P')_w$ are isothetic and in \eqref{alt2} we can choose both $\si=1$ and $\si=-1$.

Lemma~\ref{corpodiff} and \eqref{setting} imply $P_w-P_{-w}=P'_w-P'_{-w}$,
and this identity implies that the translation vector $x$ in \eqref{alt2} coincide with the one in \eqref{alt1}.
\end{proof}
\begin{definition}\label{def_positive}Let $F$ be a proper face of $P$ and $w\in S^2$ be such that $F=P_w$. We say that
\emph{$F$ is positive}  when  \eqref{alt1} holds only with $\si=1$;
that \emph{$F$ is negative} when  \eqref{alt1} holds only with $\si=-1$;
that \emph{$F$ is neutral} when  \eqref{alt1} holds both with $\si=1$ and $\si=-1$.
When $F$ is positive or neutral  $x^+(F)$  denotes the translation $x$ which appears in \eqref{alt1} when $\si=1$. When  $F$ is negative or neutral the vector $x$ which appears in \eqref{alt1} when $\si=-1$  is denoted by $x^-(F)$. 
The symbol $P'_F$ denotes $P'+x^+(F)$ when $F$ is positive and denotes  $-P'+x^-(F)$ when $F$ is negative.
\end{definition}
It is easy to check that the definition does not depend on the choice of $w$. Observe that the polytope $P'_F$ is a translation or reflection of $P'$ with the property that $F$ is a face of $P'_F$ and $\cone(P,F)=\cone(P'_F,F)$.

Within the rest of this section the terms boundary, interior and neighbourhood of a subset of $\pa P$ always refer to the relative topology induced on $\pa P$ by its immersion in $\Real^3$, with the exception of  $\pa P$ and $\pa P'$ which keep their original meaning. 
Moreover the term face will always mean proper face.
Let us try to express the global aim of the lemmas in this section in terms which are the least technical possible. Let $G$ be any face of $P$ which is not neutral.
We prove that there is a subset $\Sp$ of $\pa P\cap \pa P'_G$, which contains $G$,  to which it corresponds an ``antipodal'' subset $\Sm$ of  $\pa P\cap \pa P'_G$.  Moreover, if $G$ and $y\in\Real^3$ are suitably chosen then the boundaries of $\Sp$ and of $-\Sm+y$ coincide and  $\pa P$, $\pa P'_G$, $-\pa P+y$ and $-\pa P'_G+y$ all coincide in an one-sided neighbourhood of $\pa \Sp$. The set $\Sp$ is the  component of the intersection of $\pa P$ (with some exceptional faces removed) and $\pa P_G$ which contains $G$.

\begin{definition}\label{definizione_Sp} 
Let $\fz$ be a positive face  of $P$. Let $\cF$ be the collection of the edges or facets $F$ of $P$  which are  positive or neutral  and  satisfy $x^+(F)\neq x^+(\fz)$. Let
\begin{equation*}
\Sz=
\inte\Big( 
\pa P'_\fz\bigcap \pa P \setminus \bigcup_{F\in \cF}F\Big)
\end{equation*}
and let $\Sp$ be the closure of the  component of $\Sz$ which contains $\relint \fz$.
If $\fz$ is negative we define $\cF$, $\Sz$ and $\Sp$ as above, with positive substituted by negative and $x^+$ substituted by $x^-$.
\end{definition}
In the previous definition we have implicitly used the inclusion $\relint\fz\subset\Sz$, which is proved in next lemma.
\begin{lemma}\label{relintfz}
We have $\relint\fz\subset\Sz\cap\inte\Sp$.
\end{lemma}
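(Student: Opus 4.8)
The plan is to reduce everything to the single inclusion $\relint\fz\subset\Sz$: once that is in hand, $\relint\fz\subset\inte\Sp$ follows by elementary point-set topology, as I explain at the end. So the goal is to exhibit, around $\relint\fz$, a set that is relatively open in $\pa P$, is contained in $\pa P'_\fz\cap\pa P$, and misses $\bigcup_{F\in\cF}F$.

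First I would exploit that, by Definition~\ref{def_positive} and the remark following it, $P'_\fz$ is a translate or a reflection of $P'$ having $\fz$ as a face and satisfying $\cone(P,\fz)=\cone(P'_\fz,\fz)$. Two convex bodies with a common face and equal support cones at that face coincide in a neighbourhood of each point of the relative interior of the face; hence $\pa P$ and $\pa P'_\fz$ coincide near $\relint\fz$, which yields a relatively open $U\subset\pa P$ with $\relint\fz\subset U\subset\pa P'_\fz\cap\pa P$. Granting for the moment that $\relint\fz$ meets no $F\in\cF$, the set $\bigcup_{F\in\cF}F$ is closed and disjoint from $\relint\fz$, so replacing $U$ by $U\cap\bigl(\pa P\setminus\bigcup_{F\in\cF}F\bigr)$ keeps it relatively open, keeps $\relint\fz$ inside it, and forces it into $\pa P'_\fz\cap\pa P\setminus\bigcup_{F\in\cF}F$; therefore $\relint\fz\subset\Sz$.

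The hard part — the step I expect to be the main obstacle — is to prove $\relint\fz\cap F=\emptyset$ for every $F\in\cF$. Suppose some $x\in\relint\fz$ lies in $F$. Since the relative interiors of the faces of $P$ partition $P$, this forces $\fz\subset F$, and $\fz\neq F$ (otherwise $x^+(F)=x^+(\fz)$, contradicting $F\in\cF$); I may assume $\fz$ positive, the negative case being identical with $x^-$ in place of $x^+$. Because $P$ and $P'_\fz$ agree near $\relint\fz$, their face structures above $\fz$ agree, so $F$ corresponds to a face $F^*$ of $P'_\fz$ that coincides with $F$ near $\relint\fz$; since $\cone(P,F)$ is determined by $\cone(P,\fz)$ together with the position of $F$ among the faces of $P$ containing $\fz$, and the analogous statement holds for $P'_\fz$, we get $\cone(P,F)=\cone(P'_\fz,F^*)$, hence $\relint N(P,F)=\relint N(P'_\fz,F^*)$. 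Choosing $w$ in this common relative interior, on one hand $F^*=(P'_\fz)_w=(P')_w+x^+(\fz)$, and on the other hand, since $F$ is positive or neutral and $w\in\relint N(P,F)$, $F=(P')_w+x^+(F)$. Thus $F$ and $F^*$ are both translates of $Q:=(P')_w$ and both have $\fz$ as a face; taking $v$ in the common relative interior of $N(F,\fz)=N(F^*,\fz)$ (these are equal because $F$ and $F^*$ agree near $\relint\fz$) gives $\fz=Q_v+x^+(F)=Q_v+x^+(\fz)$, so $x^+(F)=x^+(\fz)$, a contradiction. The delicate point is exactly that $F$ need not literally be a face of $P'_\fz$ (the two polytopes are only known to coincide near $\fz$, not near $\relint F$): this is circumvented by passing to $F^*$ and using that $F$ and $F^*$, being translates of the very same face $(P')_w$ of $P'$ and sharing the subface $\fz$, must coincide.

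Finally, to pass from $\relint\fz\subset\Sz$ to $\relint\fz\subset\inte\Sp$: the connected set $\relint\fz$ is contained in the open set $\Sz$, hence lies in a single component $\Sigma^*$ of $\Sz$, namely the component containing $\relint\fz$; since $\pa P$ is locally connected, $\Sigma^*$ is open, so $\relint\fz\subset\Sigma^*\subset\inte(\cl\Sigma^*)=\inte\Sp$. Combining this with the inclusion already proved gives $\relint\fz\subset\Sz\cap\inte\Sp$, as desired.
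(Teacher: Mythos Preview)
Your proof is correct and follows essentially the same strategy as the paper's: reduce to $\relint\fz\subset\Sz$, use $\cone(P,\fz)=\cone(P'_\fz,\fz)$ to get local coincidence of $P$ and $P'_\fz$ near $\relint\fz$, and then show that any positive-or-neutral face $F\supset\fz$ must satisfy $x^+(F)=x^+(\fz)$. The paper handles the cases $\dim\fz=0,1,2$ separately (sketching only the vertex case) and concludes $F=F'$ via ``a convex polytope has a unique face with a given support cone'', whereas you treat all dimensions uniformly and extract $x^+(F)=x^+(\fz)$ by comparing the sub-face $\fz$ inside the two translates $F$ and $F^*$ of $(P')_w$; this makes explicit a step the paper leaves terse, but the underlying idea is the same.
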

\begin{proof}
It suffices to prove $\relint\fz\subset\Sz$, since this inclusion implies immediately $\relint\fz\subset\inte\Sp$. When $\fz$ is a facet the inclusion is obvious. Now assume that $\fz$ is a positive vertex $q$. Since $\cone(P,q)=\cone(P'_\fz,q)$,  $P$ and $P'_\fz$ coincide in a neighbourhood of $q$. Therefore to each face $F$  of $P$ containing $q$ it corresponds a face $F'$ of $P'_\fz$ containing $q$ with $\cone(P,F)=\cone(P'_\fz,F')$. If $F$ is positive or neutral then it necessarily coincides with $F'$ (a convex polytope has an unique face with a given support cone) and therefore $x^+(G)=x^+(q)$. This proves that no face of $P$ containing $q$ belongs to $\cF$, and this implies that  a neighbourhood of $q$ is contained  in $\Sz$. Similar arguments prove $\relint\fz\subset\Sz$  when $\fz$ is a negative vertex or  an edge.
\end{proof}

The next lemma proves that, when $P$ is not a translation or reflection of $P'$,  $P$ has both positive and negative facets (and thus $\Sp\neq\pa P$).

\begin{lemma}\label{centralmente_simmetrico}
If no facet of $P$ is negative (is positive) then $P'$ is a translate of $P$ (of $-P$, respectively). If each facet of $P$ is neutral then $P$ and $P'$ are centrally symmetric.
\end{lemma}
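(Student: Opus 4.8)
The plan is to compare the surface area (Minkowski) measures $S_P$ and $S_{P'}$ of the two polytopes: I will show that the hypothesis forces $S_P=S_{P'}$, and then the uniqueness part of the Minkowski problem (\cite[Th.~7.2.1]{Sc}, already invoked in the proof of Lemma~\ref{corpodiff}) gives $P'=P+t$ for some $t\in\Real^3$. Two elementary remarks will be used repeatedly: for a facet $F=P_w$ one has $\cone(P,F)=\{z:z\cdot w\le0\}$, so any face $F'$ with $\cone(P',F')=\cone(P,F)$ is itself a facet of $P'$ with outer normal $w$; and isothetic faces have equal support cones, hence equal normal cones, hence equal dimension.

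Assume first that no facet of $P$ is negative, so by Lemma~\ref{lemma_setting} each proper face of $P$ is positive, negative, or neutral (in the sense of Definition~\ref{def_positive}), and therefore every facet of $P$ is positive or neutral. If $F=P_w$ is a facet then \eqref{alt1} holds with $\si=1$, i.e.\ $P_w=(P')_w+x^+(F)$ with matching support cones; hence $(P')_w$ is a facet of $P'$ with normal $w$ and $\area((P')_w)=\area(F)$. Since distinct facets of $P$ have distinct outer normals, this gives $S_P\le S_{P'}$ at once. For the reverse inequality, fix $w$ such that $(P')_w$ is a facet of $P'$; by synisothesis $(P')_w$ is isothetic to a facet of $P$ or of $-P$. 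If it is isothetic to a facet of $P$, that facet must be $P_w$, so $P_w$ is a facet with $\area(P_w)=\area((P')_w)$. If instead it is isothetic to a facet of $-P$, that facet must be $-P_{-w}$; writing $(P')_w=-P_{-w}+x$ and using $\cone(-P,-P_{-w})=-\cone(P,P_{-w})$ one checks that \eqref{alt1} holds for the facet $P_{-w}$ of $P$ in direction $-w$ with $\si=-1$, so $P_{-w}$ is negative or neutral, hence --- not being negative --- neutral. Then \eqref{alt1} also holds for $P_{-w}$ with $\si=1$, so $(P')_{-w}$ is a facet of $P'$; now applying synisothesis to the face $P_w$ shows it is isothetic to one of the facets $(P')_w$ of $P'$ or $-(P')_{-w}$ of $-P'$, so $P_w$ is a facet and, tracing areas through the various translations and reflections, $\area(P_w)=\area((P')_w)$. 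In either case the atom of $S_P$ at $w$ dominates that of $S_{P'}$, so $S_{P'}\le S_P$. Therefore $S_P=S_{P'}$ and $P'=P+t$.

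If no facet of $P$ is positive, observe that $(P,-P)$ is also synisothetic to $(-P',-(-P'))$ and that a facet of $P$ is positive relative to the pair $(P',-P')$ precisely when it is negative relative to the pair $(-P',P')$; applying the case just treated with $-P'$ in place of $P'$ then shows that $P$ is a translate of $-P'$, i.e.\ $P'$ is a translate of $-P$. Finally, if every facet of $P$ is neutral then no facet is negative and none is positive, so $P'=P+t$ and $P'=-P+s$ for suitable $t,s\in\Real^3$; hence $P=-P+(s-t)$, so $P$ is symmetric in the point $(s-t)/2$, and $P'=P+t$ is then centrally symmetric as well.

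The delicate step is the inequality $S_{P'}\le S_P$: a priori a facet $F'=(P')_w$ of $P'$ might be isothetic only to a facet of $-P$ and to no facet of $P$ --- this is exactly the phenomenon behind Example~\ref{es_sinisotetici} --- and then one must still exclude the possibility that $P_w$ fails to be a facet, or is a facet of the wrong area. What rescues the argument is that such an $F'$ forces the opposite facet $P_{-w}$ of $P$ to be neutral, and neutrality of $P_{-w}$ supplies precisely the extra isotheses needed to identify $P_w$.
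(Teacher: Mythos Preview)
Your argument is correct, but it takes a different route from the paper's. The paper reasons locally and combinatorially: if $F_1,F_2$ are adjacent facets of $P$ sharing an edge $S$, then $x^+(F_1)=x^+(F_2)$, because otherwise the facet of $P'+x^+(F_1)$ adjacent to $F_1$ across $S$ could not be matched isothetically with any facet of $P$; connectedness of the facet--adjacency graph then forces all the $x^+(F)$ to coincide, giving $P=P'+x^+(F_1)$ directly. Your approach is global: you compare the surface area measures $S_P$ and $S_{P'}$ atom by atom and invoke the uniqueness part of Minkowski's theorem. The inequality $S_P\le S_{P'}$ is immediate, while for $S_{P'}\le S_P$ you correctly handle the one delicate case---a facet $(P')_w$ isothetic only to a facet of $-P$---by showing that $P_{-w}$ is then neutral, which in turn forces $P_w$ to be a facet of the right area via the chain $\area(P_w)=\area((P')_{-w})=\area(P_{-w})=\area((P')_w)$. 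Both proofs are valid; the paper's is shorter, more elementary, and produces the translation vector explicitly, whereas yours trades the adjacency argument for a second appeal to Minkowski's theorem (already used in Lemma~\ref{corpodiff}), at the cost of a small case analysis.
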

\begin{proof}
Assume that no facet of $P$ is negative.
Let $F_1$ and $F_2$ be adjacent facets of $P$ (in the sense that they have an edge $S$ in common). We prove that  $x^+(F_1)=x^+(F_2)$. Indeed, if $x^+(F_1)\neq x^+(F_2)$ then $F_2$ is not a face of $P'+x^+(F_1)$ and the facet of $P'+x^+(F_1)$ containing $S$ and different from $F_1$ has no corresponding facet in $P$.

Since each facet of $P$ can be joined to $F_1$ by a finite sequence of adjacent facets, we have $P=P'+x^+(F_1)$.
A similar argument proves the claim regarding the absence of positive facets. These two implications together show that when $P$ has only neutral facets  both $P$ and $P'$ are centrally symmetric.
\end{proof}

Standard arguments of general topology prove that $\Sp$, the closure of a connected open set, coincides with $\cl\inte\Sp$.  We recall  that for an open subset of $\Real^2$ (and thus also for an open subset of $\pa P$) being connected is equivalent to being path-connected. 
The set $\pa \Sp$ is clearly the union of finitely many segments, which we call edges.
\begin{lemma}\label{proprieta_frontiere} Assume that $P$ is not a translation or a reflection of $P'$, that  $\fz$ is positive and $x^+(\fz)=0$. Let $\se$ be an edge of $\pa\Sp$. The following assertions hold.
\begin{enumerate}
\item\label{pf_item1} There exist  a facet $F$ of $P$ and a facet $F'$ of $P'$, with $F\notin\cF$ and  $\inte F\cap F'\neq\emptyset$, such that $F\cap F'=F\cap \Sp$ and  $ \se$ is an edge of the polygon $F\cap F'$.
\item\label{pf_item2} There exists  $ \tdue= \tdue( \se)\in\Real^3$ such that the following properties hold:
\begin{enumerate}
\item\label{pf_item2a} If  $\se$ is an edge of $P$ and  of $P'$, then it  is an edge  of $-P+ \tdue$ and of $-P'+ \tdue$ too and
\begin{equation}\label{pf_coni_uguali}
\cone(P,\se)=\cone(- P'+ \tdue, \se)\neq\cone(P', \se)=\cone(-P+ \tdue, \se).
\end{equation}
In this case $S$ is negative and  $ \tdue=x^-(\se)$.
\item\label{pf_item2b} If  $\se$ is not an edge of $P$ or it is not an edge of $P'$, then $F$ and  $F'$ are also facets respectively of $-P'+ \tdue$ and $-P+ \tdue$.  In this case $F$ is negative and $ \tdue=x^-(F)$.
\end{enumerate}
\item\label{pf_item3} There exist a  neighbourhood $W$ of $\relint  \se$ such that
\begin{equation}\label{uguali_intorno}
\Sp\cap W\subset(-\pa P+ \tdue)\cap(-\pa P'+ \tdue).
\end{equation}
\end{enumerate}\end{lemma}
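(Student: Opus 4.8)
The plan is to prove the three assertions in turn, deriving (ii) from (i) and (iii) from (ii), extracting all the geometry from the single fact that $\se$ lies on the relative boundary of $\Sp$ in $\pa P$ and not in $\inte\Sp$. \emph{Assertion (i).} Fix $p\in\relint\se$. Since $p\in\pa\Sp$ it is a limit of points of $\inte\Sp\subset\pa P\cap\pa P'$, and since $P$ has finitely many facets we may, after passing to a subsequence, find a facet $F$ of $P$ with $p\in F$ and $\inte F\cap\inte\Sp\neq\emptyset$. Every face in $\cF$ is entirely deleted from $\pa P\cap\pa P'$ in forming $\Sz$, while $\inte F$ meets $\inte\Sp\subset\Sz$; hence $F\notin\cF$. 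On $\inte F$ the boundary $\pa P$ is flat and lies in $\pi:=\aff F$, so $\inte F\cap\inte\Sp$ is a two-dimensional subset of $\pi\cap\pa P'$; therefore $\pi$ supports $P'$ and $F':=\pi\cap P'$ is a facet of $P'$ with $\inte F\cap\inte F'\neq\emptyset$. Now $\inte F\cap\inte F'$ is a connected, relatively open subset of $\pa P$, is contained in $\pa P\cap\pa P'$, and meets no face in $\cF$ (a facet other than $F$, or an edge, misses $\inte F$); so $\inte F\cap\inte F'\subset\Sz$, and meeting the component $\inte\Sp$ it is contained in it. Passing to closures gives $F\cap F'\subset\Sp$, and as $F\cap\pa P'=F\cap F'$ we get $F\cap\Sp=F\cap F'$. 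Finally $p\in F\cap\Sp=F\cap F'$ while $p\notin\inte\Sp\supset\relint(F\cap F')$, so $p$ lies on an edge of the polygon $F\cap F'$; letting $p$ run over $\relint\se$ and using that $\se$ is a maximal segment of $\pa\Sp$ identifies $\se$ with that edge.

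\emph{Assertion (ii).} By (i) the edge $\se$ of $F\cap F'$ is supported by an edge of $F$ (so $\se$ is an edge of $P$), or by an edge of $F'$ (so $\se$ is an edge of $P'$), or by both. Because $\inte\Sp$ is connected and, by Lemma~\ref{relintfz}, contains $\relint\fz$, along which $P$ and $P'$ coincide (as $x^+(\fz)=0$), the bodies $P$ and $P'$ lie on the same side of $\pi$ all along $\inte\Sp$; hence $F'=P'_\nu$, where $\nu$ is the outer normal of $F$. Suppose first that $\se$ is not an edge of both, say $\relint\se\subset\relint F'$ (the other sub-case is symmetric). Since $F\notin\cF$, either $F$ is negative, or $F$ is positive or neutral with $x^+(F)=0$; in the latter case $P'_F=P'$ would have $F$ as a facet, forcing $F=F'$ and making $\se$ an edge of $F'$, a contradiction. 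So $F$ is negative; put $\tdue=x^-(F)$. Then \eqref{alt2} with $w=\nu$, $\si=-1$ gives $P_{-\nu}=(-P')_{-\nu}+\tdue=-F'+\tdue$, whence $F'=(-P+\tdue)_\nu$ is a facet of $-P+\tdue$, while $F=(P'_F)_\nu=(-P'+\tdue)_\nu$ is a facet of $-P'+\tdue$: this is (b).

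Now the case (a): $\se$ lies on an edge of $P$ and of $P'$. The crucial point is $\cone(P,\se)\neq\cone(P',\se)$. If they coincided, then $P$ and $P'$ would agree in a neighbourhood of $\relint\se$; but then, using $F\notin\cF$ and — via the fact that a convex polygon has no two distinct collinear edges — that the two facets of $P$ abutting $\se$ are also outside $\cF$, a full neighbourhood of $p$ in $\pa P$ would lie in $\Sz$, giving $p\in\inte\Sp$, a contradiction. The same no-collinear-edges argument shows $\se\notin\cF$, and since $\se$ positive or neutral with $x^+(\se)=0$ would give $\cone(P,\se)=\cone(P',\se)$, we conclude $\se$ is negative; put $\tdue=x^-(\se)$. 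By the definition of $P'_\se=-P'+\tdue$, $\se$ is an edge of $-P'+\tdue$ with $\cone(P,\se)=\cone(-P'+\tdue,\se)$. For the remaining identity, note that $\cone(P,\se)$ and $\cone(P',\se)$ are dihedral cones sharing the facet spanned by $F$ (equivalently $F'$) and both lying below $\pi$, so one contains the other and $\relint N(P,\se)\cap\relint N(P',\se)\neq\emptyset$; choose $w$ there, so $\se=P_w=P'_w$. From $DP=DP'$ (Lemma~\ref{corpodiff}) and \eqref{facce_corpodiff} we get $(-P)_w=(-P')_w$, i.e.\ $P_{-w}=P'_{-w}$, and \eqref{alt2} with $\si=-1$ gives $P_{-w}=-\se+\tdue$; hence $-\se+\tdue$ is an edge of $P$, so $\se=-(-\se+\tdue)+\tdue$ is an edge of $-P+\tdue$, and $\cone(-P+\tdue,\se)=-\cone(P,P_{-w})=-\cone(-P',(-P')_{-w})=\cone(P',\se)$. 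This completes (a).

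\emph{Assertion (iii).} Take $W$ a neighbourhood of $\relint\se$ in $\pa P$ so small that $\Sp\cap W=(F\cap F')\cap W$, as (i) allows. In case (a), $\cone(P,\se)=\cone(-P'+\tdue,\se)$ forces $P$ and $-P'+\tdue$ to coincide near $\relint\se$, so $\pa P=-\pa P'+\tdue$ there, and likewise $\cone(P',\se)=\cone(-P+\tdue,\se)$ gives $\pa P'=-\pa P+\tdue$ near $\relint\se$. In case (b), $F\subset-\pa P'+\tdue$ and $F'\subset-\pa P+\tdue$ since these are facets there. Either way $\Sp\cap W=(F\cap F')\cap W\subset(-\pa P+\tdue)\cap(-\pa P'+\tdue)$. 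The step I expect to be hardest is the strict inequality $\cone(P,\se)\neq\cone(P',\se)$ in case (a) — equivalently, that $P$ and $P'$ cannot coincide in a neighbourhood of $\relint\se$, and that $\se\notin\cF$. This is precisely where ``$\se$ is an edge of $\pa\Sp$'' has to be converted into genuine geometric information, and it forces careful bookkeeping of which edges and facets have been removed from $\pa P\cap\pa P'$ in forming $\Sz$; the recurring device is that a convex polytope has no two distinct collinear edges, which rules out a translated copy of a facet or an edge of $P'$ sitting inside $\pa P$ along $\se$.
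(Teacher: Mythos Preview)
Your proof follows the same overall structure as the paper's: extract coplanar facets $F,F'$ from a limit argument, identify $F\cap F'$ with $F\cap\Sp$, and then split (ii) according to whether $\se$ is an edge of both $P$ and $P'$.  The handling of case~(b) and of assertion~(iii) matches the paper essentially verbatim, and your derivation of the identity $\cone(-P+\tdue,\se)=\cone(P',\se)$ via $DP=DP'$ and \eqref{alt2} is correct and slightly more explicit than the paper's one-line appeal to Lemma~\ref{lemma_setting}.

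There is one genuine soft spot: the repeated appeal to ``a convex polygon has no two distinct collinear edges.''  In the contradiction argument for $\cone(P,\se)\neq\cone(P',\se)$ you need $G\notin\cF$, i.e.\ if $G$ is positive or neutral then $x^+(G)=0$.  Under the standing assumption $\cone(P,\se)=\cone(P',\se)$ one has $G'=G-x^+(G)$ (same outer normal in $P'$), and both $G$ and $G'$ have $\se$ as an edge.  Your argument produces two edges $\se$ and $\se+x^+(G)$ of $G$, but these are collinear only when $x^+(G)$ is parallel to $\se$; if not, they are merely parallel, which a convex polygon certainly allows.  The paper closes this by observing that $\se$ has the \emph{same in-plane outer normal} $u$ as an edge of $G$ and of $G'$ (both polygons lie on the same side of $\aff\se$, away from $F$), so $(G)_u=\se=(G')_u=(G)_u-x^+(G)$, forcing $x^+(G)=0$.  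The same remark applies to your claim ``the same no-collinear-edges argument shows $\se\notin\cF$'': in fact that step is unnecessary, since the observation you make a few lines later---that $\relint N(P,\se)\cap\relint N(P',\se)\neq\emptyset$---already shows that $\se$ positive or neutral would force $x^+(\se)=0$ and hence $\cone(P,\se)=\cone(P',\se)$, so $\se$ must be negative outright.
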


\begin{proof}
Claim~\eqref{pf_item1}. Let $z\in\relint \se$. There exists a sequence $(z_n)$ of points of $\pa P$ converging to $z$ and contained in the component of $\Sz$ containing $\relint\fz$. We may assume that infinitely many terms of this sequence are contained in  $\inte(F\cap F')$, where $F$ and $F'$ are suitable coplanar  facets of $P$ and $P'$, respectively, containing $\se$ and with $F\notin \cF$. Since $\inte(F\cap F')$ is contained in $\Sz$ and contains points path-connected to $\relint\fz$ by a path in $\Sz$, it is contained in the component of $\Sz$ containing $\relint\fz$. Thus $F\cap F'\subset\Sp$.
On the other hand, we have $F\cap\Sp\subset F\cap\pa P'$ (by definition of $\Sp$) and $F\cap\pa P'=F\cap F'$ (by the convexity of $P$ and $P'$). All these inclusions imply 
\begin{equation}\label{fintersectfp}
F\cap F'=\Sp\cap F.
\end{equation}
In view of \eqref{fintersectfp} and $\se\subset\pa\Sp$, $\se$ cannot intersect $\inte(F\cap F')$. Thus $\se$ is contained in an edge of the polygon $F\cap F'$. It is easy to prove that it coincides with such an edge.

Claim~\eqref{pf_item2}. We assume  that
\begin{equation}\label{l_comune}
\text{$\se$ is an edge of $P$ and $P'$}
\end{equation}
and  prove that
\begin{equation}\label{coni_diversi}
 \cone(P, \se)\neq\cone(P', \se).
\end{equation}
Assume \eqref{coni_diversi} false and let us prove that $\Sp$ ``extends on both sides of $\se$''.
Let $G$ be the facet of $P$ containing $\se$ and different from $F$, and let $G'$ be the  facet  of $P'$ containing $\se$ and different from $F'$. If \eqref{coni_diversi} is false, $G$ and $G'$ are coplanar and their intersection has non-empty interior and is path-connected to $\se$.
We prove that $\se\notin\cF$ and $G\notin\cF$. The assumption \eqref{l_comune} and the denial of \eqref{coni_diversi} imply  $\se$  positive or neutral and $x^+(S)=0$. Thus, $\se\notin\cF$.
Assume that $G$ is positive or neutral. In this case  both $G+x^+(G)$ and $G'$ are facets of $P'$ with the same outer normal and thus they coincide. Since $G$ and $G'$ share the edge $ \se$, it has to be $x^+(G)=0$. This proves $G\notin\cF$. 
Since $\se\notin\cF$ and $G\notin\cF$,  we have $(F\cap F')\cup\se\cup(G\cap G')\subset\Sp$. Thus  $\se\cap\inte \Sp\neq\emptyset$ and $\se$ is not contained in $\pa \Sp$. 

Assume again  \eqref{l_comune}. Condition \eqref{coni_diversi} implies that $ \se$ is negative. Choose $w\in S^2$ so that $S=P_w$ and define $ \tdue=x^-( \se)$. Conditions
\eqref{alt1} and \eqref{alt2} imply that
$ \se$ is a common edge of $-P+ \tdue$ and $-P'+ \tdue$ and \eqref{pf_coni_uguali} holds.

Now assume   \eqref{l_comune} false. In this case we have $F\neq F'$, which implies that  $F$ is negative. In fact,  if $F$ is positive or neutral then $x^+(F)\neq0$, because $F$ is not a facet of $P'$, but $x^+(F)\neq0$ contradicts $F\notin\cF$, since $x^+(\fz)=0$. Choose $w\in S^2$ so that $F=P_w$ and define $ \tdue=x^-(F)$. Conditions \eqref{alt1} and \eqref{alt2} imply that $F$ is a facet of $-P'+ \tdue$ and  $F'$ is a facet of  $-P+ \tdue$. 

Claim~\eqref{pf_item3} is an immediate consequence of Claims~\eqref{pf_item1} and~\eqref{pf_item2}.
\end{proof}

The number of components of $\pa \Sp$ is finite since the number of edges of $\pa\Sp$ is finite. Each component is a polygonal curve.

\begin{lemma}\label{om_costante}Assume that $P$ is not a translation or a reflection of $P'$, that  $\fz$ is positive and $x^+(\fz)=0$. Let  $C_1,\dots,C_s$, for a suitable $s>0$, denote the components of $\pa\Sp$ and let $m\in\{1,\dots,s\}$.
The vector $\tdue$ associated to each edge of $C_m$ by Lemma~\ref{proprieta_frontiere} does not depend on the edge and \eqref{uguali_intorno} holds true when $W$ is a suitable neighbourhood of $C_m$.
\end{lemma}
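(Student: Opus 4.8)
The plan is to fix the component $C_m$ and to prove that the vector attached by Lemma~\ref{proprieta_frontiere} to an edge of $C_m$ is unchanged when one passes to an adjacent edge; since $C_m$ is a connected polygonal curve this gives a single vector $\tdue$ valid along all of $C_m$, after which \eqref{uguali_intorno} for a neighbourhood of $C_m$ follows by glueing the neighbourhoods of the individual edges given by Lemma~\ref{proprieta_frontiere}\eqref{pf_item3} with small neighbourhoods of the vertices of $C_m$, dealt with below. A preliminary observation is that to each edge $\se$ of $C_m$ one can attach a facet $F_\se$ of $P$ which is negative or neutral, satisfies $\relint F_\se\cap\Sp\neq\emptyset$ and $\se\subset F_\se$, and has $\tdue(\se)=x^-(F_\se)$. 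In case~\eqref{pf_item2b} of Lemma~\ref{proprieta_frontiere} one takes $F_\se$ to be the facet $F$ there ($F$ negative, $\tdue=x^-(F)$, and $\se$ an edge of the polygon $F\cap F'=F\cap\Sp$). In case~\eqref{pf_item2a}, $\se$ is a negative common edge of $P$ and $P'$ with $\tdue(\se)=x^-(\se)$, so $\pa P$ and $\pa(-P'+x^-(\se))$ agree near $\relint\se$; hence the facet $F$ of Lemma~\ref{proprieta_frontiere}\eqref{pf_item1} carrying $\Sp$ along $\se$ is also a facet of $-P'+x^-(\se)$, so $F$ is negative or neutral with $x^-(F)=x^-(\se)$, and one sets $F_\se:=F$.

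The central tool is a transitivity principle: if $G_1$ and $G_2$ are faces of $P$, each negative or neutral, with $G_1\cap G_2\neq\emptyset$, then $x^-(G_1)=x^-(G_2)$. Indeed $G_1$ negative or neutral means $\cone(P,G_1)=\cone(-P'+x^-(G_1),G_1)$ with $G_1$ a face of both polytopes, so $P$ and $-P'+x^-(G_1)$ coincide on a neighbourhood of $\relint G_1$; passing to the face $G:=G_1\cap G_2$ of $P$ contained in $G_1$, the set $G$ is a face of $-P'+x^-(G_1)$ with the same support cone, the two polytopes coincide near $\relint G$, and by uniqueness of the face of a polytope with a prescribed support cone one gets $x^-(G_2)=x^-(G_1)$. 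A companion remark is that a facet $\Psi$ of $P$ with $\relint\Psi\cap\Sp\neq\emptyset$ which is adjacent along an edge to a facet $N$ that is negative or neutral is itself negative or neutral with $x^-(\Psi)=x^-(N)$: since $\Psi\notin\cF$ and $\pa P$ coincides with $\pa(-P'+x^-(N))$ near the common edge, $\Psi$ is a facet of $-P'+x^-(N)$, so \eqref{alt1} holds for $\Psi$ with $\si=-1$; were $\Psi$ purely positive this would be a contradiction.

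Now fix a vertex $v$ shared by two consecutive edges $\se,\hat\se$ of $C_m$. The point $v$ lies in the relative interior of a unique face of $P$, and $\pa P$ near $v$ is a fan of finitely many facet-sectors at $v$; near $v$, $\Sp$ is the closure of a connected union of such sectors and edge-segments, bounded by $\se$ and $\hat\se$. If $v$ lies in the relative interior of a facet of $P$ then $F_\se=F_{\hat\se}$; if it lies in the relative interior of an edge of $P$ then $F_\se$ and $F_{\hat\se}$ are among the two facets at that edge, so $F_\se\cap F_{\hat\se}\neq\emptyset$; in both situations the transitivity principle yields $\tdue(\se)=x^-(F_\se)=x^-(F_{\hat\se})=\tdue(\hat\se)$. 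If $v$ is a vertex of $P$, walk around $v$ inside $\Sp$ from $F_\se$ to $F_{\hat\se}$: every facet met has relative interior meeting $\Sp$, hence lies outside $\cF$, consecutive facets share an edge through $v$, and the companion remark propagates ``negative or neutral, with the same $x^-$'' along the chain starting from $F_\se$; hence $x^-(F_\se)=x^-(F_{\hat\se})$, i.e.\ $\tdue(\se)=\tdue(\hat\se)$.

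Thus $\tdue$ is constant along $C_m$; denote by $\tdue$ the common value. Near each $\relint\se$, \eqref{uguali_intorno} is precisely Lemma~\ref{proprieta_frontiere}\eqref{pf_item3}. Near a vertex $v$ of $C_m$ the fan description shows that every facet $\Psi$ of $\Sp$ at $v$ is negative or neutral with $x^-(\Psi)=\tdue$, so $\pa P$ coincides with $\pa(-P'+\tdue)$ on a neighbourhood of $\relint\Psi$; the inclusion into $-\pa P+\tdue$ near $v$ comes from the relevant clause of Lemma~\ref{proprieta_frontiere}\eqref{pf_item2} (the facet of $P'$ coplanar with $\Psi$ being a facet of $-P+\tdue$). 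Taking the union of these neighbourhoods over the edges and vertices of $C_m$ yields the required $W$. I expect the main obstacle to be this local analysis at a vertex $v$ of $P$: controlling how the faces of $\Sp$ are arranged around $v$ and excluding a purely positive facet sandwiched inside the $\Sp$-fan between two negative facets bearing different translation vectors. Everything rests on the coincidence ``$P\equiv-P'+x^-(F)$ near $\relint F$'' for $F$ negative or neutral, on its propagation to the faces of $F$ and to the neighbouring facets of $P$, and on the uniqueness of the face of a polytope with a prescribed support cone.
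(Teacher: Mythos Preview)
Your transitivity principle is the crux of the argument, and it is where the proof breaks down. You claim that if $G_1,G_2$ are negative or neutral faces of $P$ with $G_1\cap G_2\neq\emptyset$ then $x^-(G_1)=x^-(G_2)$, arguing that ``$P$ and $-P'+x^-(G_1)$ coincide on a neighbourhood of $\relint G_1$'' and hence also near $\relint(G_1\cap G_2)$, so that $G:=G_1\cap G_2$ has the same support cone in both polytopes. This inference is invalid. When $G_1$ is a facet, the hypothesis only says that $G_1$ is a common facet of $P$ and $-P'+x^-(G_1)$ with the same outer normal; it says nothing about the \emph{other} facet of $-P'+x^-(G_1)$ adjacent to the edge $G$. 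Thus $\cone(P,G)$ and $\cone(-P'+x^-(G_1),G)$ need not agree, and $G$ need not be negative or neutral. Your companion remark has the same flaw: you assert ``$\pa P$ coincides with $\pa(-P'+x^-(N))$ near the common edge'', but knowing $N$ is a shared facet does not give coincidence across that edge.

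This is not a cosmetic gap. The paper's proof splits the analysis at a common endpoint $q$ of two consecutive edges of $C_m$ into three cases, and the genuinely difficult one is exactly the configuration your principle would trivialise: $q$ lies in the relative interior of an edge $R$ of $P$, the two adjacent facets $F_1,F_2$ are negative, but $R$ itself is \emph{positive} (hence $R\in\cF$). Here your argument would force $R$ to be negative or neutral, a contradiction; in reality nothing local rules out $x^-(F_1)\neq x^-(F_2)$. The paper handles this by a global combinatorial argument: such an $R$ produces an ``$\cF$-connection'' from $C_m$ to another boundary component $C_k$, any two components can be $\cF$-connected by at most one segment and there are no closed circuits, and the difference $\tdue(\se_1)-\tdue(\se_2)$ propagates unchanged across each $\cF$-connection; combined with the finiteness of the number of components this forces $\tdue(\se_1)=\tdue(\se_2)$. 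Your local propagation strategy cannot replace this global step. Incidentally, the case you flagged as the main obstacle (when $q$ is a vertex of $P$) is actually the easy one, because then one shows $q$ is a negative vertex and the full support cone at $q$ is shared, which \emph{does} give coincidence in a whole neighbourhood of $q$.
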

\begin{proof}
Let $ \se_1$ and $ \se_2$ be any two adjacent edges of $C_m$ and let $q$ be the common endpoint. For each $i=1,2$, let $F_i$,  $F_i'$ and $\tdue(\se_i)$  be the facets and vector associated to $\se_i$ by   Lemma~\ref{proprieta_frontiere}. 
To prove the lemma it suffices to prove that $\tdue(\se_1)=\tdue(\se_2)$ and that 
\begin{equation}\label{intorno_q}
\text{\eqref{uguali_intorno} holds when $W$ is a suitable neighbourhood of $q$,}
\end{equation}
since any two edges of $C_m$ are joined by a finite sequence of adjacent edges of $C_m$. We divide the proof in three cases. 
In the first two cases the claim follows from the existence of a suitable face containing $q$ whose vector $x^-$ coincides both with $y(\se_1)$ and $\tdue(\se_2)$. In the third case some topological arguments are needed. 

\textit{Case 1. The point $q$ is a vertex of $P$ and of $P'$.}

Assume $\cone(P,q)=\cone(P',q)$. In this case $q$ is positive or neutral and arguments similar to those used in the proof of Lemma~\ref{relintfz} show that a neighbourhood of $q$ is contained  in $\Sp$. This is impossible because $ \se_1$, $ \se_2\subset\pa\Sp$.

We may thus assume $\cone(P,q)\neq\cone(P',q)$. This implies that $q$  is negative and  that $P$ and $-P'+x^-(q)$ coincide in a neighbourhood of the common vertex $q$. If, for some $i$, $S_i$ is not an edge of  $P$ or it is not an edge of $P'$, then Lemma~\ref{proprieta_frontiere} \eqref{pf_item2b} applies and $F_i$ is a facet of $-P'+ \tdue( \se_i)$. 
This  implies  $ \tdue( \se_i)=x^-(q)$. Similar arguments prove the same identity when  $S_i$ is  an edge of  $P$ and of $P'$ and Lemma~\ref{proprieta_frontiere} \eqref{pf_item2a} applies.
In each case we have $ \tdue( \se_1)=x^-(q)= \tdue( \se_2)$. Claim~\eqref{intorno_q} follows from the inclusion $\Sp\subset\pa P\cap\pa P'$ and from the coincidence of $P$ and $-P'+x^-(q)$ and of $P'$ and $-P+x^-(q)$ in a neighbourhood of $q$.

\textit{Case 2. The point $q$ is in the interior of a facet of $P$ or of a facet of $P'$.}

Assume, for instance, that $q$ is in the interior of a facet of $P$. 
In this case $F_1=F_2$ and, since $F_1'$ and $F_2'$ are coplanar with $F_1$, we have $F_1'=F_2'$ too. Since neither $\se_1$ nor $\se_2$ are edges of $P$, Lemma~\ref{proprieta_frontiere} \eqref{pf_item2b} applies both to $\se_1$ and $\se_2$.
Lemma~\ref{proprieta_frontiere} implies the equality  $\tdue( \se_1)=\tdue( \se_2)=x^-(F)$ and it also implies \eqref{intorno_q}.

\textit{Case 3.  The point $q$ is in the relative interior of an edge (but of no facet) of $P$ or of $P'$.}

Assume, for instance, that $q$ is in the relative interior of an edge $R$ of $P$.

First we observe that $R$ is both an edge of $F_1$ and of $F_2$, because $q$ belongs both to $F_1$ and to $F_2$. In addition,  neither $\se_1$ nor $\se_2$ can be  edges of $P$. We may assume $F_1\neq F_2$, because when $F_1=F_2$ the proof can be concluded as in Case~2. We may also assume $R$ positive, because when $R$ is negative or neutral the lemma can be proved as in the second paragraph of  Case~1, with $R$ playing the role of $q$.

We claim that the point $q$  does not belong to the relative interior of an edge of $P'$. If it does and $R'$ is the edge, then $R'$ is an edge of $F_1'$ and $F_2'$ (it is proved as above) and it is collinear to $R$ (because $F_i$ is coplanar to $F_i'$, for each $i=1,2$). In this case $q$ belongs to the relative interior of the edge $R\cap R'$ of $F_1\cap F_1'$. This  contradicts the fact, proved in Lemma~\ref{proprieta_frontiere},  that $ \se_1$ is an edge of $F_1\cap F_1'$.

\begin{figure}
\begin{center}
\input{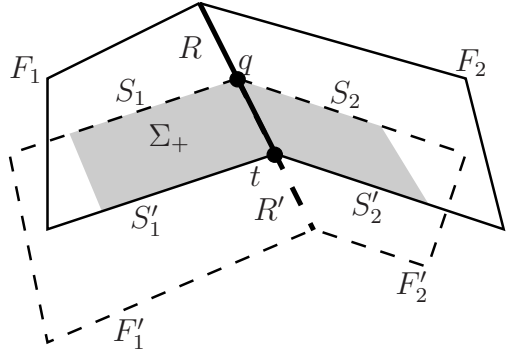}
\end{center}
\caption{The set $\Sp$ (in gray) in a neighbourhood of $[q,t]$. Solid lines represent edges of $\pa P$, while dotted lines represent edges of $\pa P'$.}
\label{fig_F_connessi}
\end{figure}
Let $R'=R+x^+(R)$. See Fig.~\ref{fig_F_connessi}. Observe that $R'$ is the edge of $P'$ in common to $F_1'$ and $F_2'$, because the facets of $P'$ which contain $R'$ have outer normals equal to those of $F_1$ and $F_2$, and $F_1'$ and $F_2'$ are the only facets of $P'$ with this property. Moreover $R$ and $R'$ are collinear and $x^+(R)\neq0$.
Thus, $R\in\cF$.

Let $R\cap R'=[q,t]$.  For $i=1,2$,  let $ \se'_i$ be the edge of $F_i\cap F'_i$ which contains $t$ and differs from $[q,t]$. It is evident that $\pa P$ differs from $\pa P'$ on one side of $ \se_i'$ and that these segments are contained in $\pa \Sp$. 
Let $C_k$ be the  component of $\pa \Sp$ which contains $ \se_1'$ and $ \se_2'$, and let us prove that $k\neq m$.
In a neighbourhood of $[q,t]$ the set $\Sp$ coincides with
\[
(F_1\cap F_1')\cup(F_2\cap F_2')
\]
and it is bounded on one side by $ \se_1\cup  \se_2$ and on the other side by $ \se_1'\cup  \se_2'$.  By definition of $\Sp$, for each $i=1,2$, there is a simple continuous curve $\Gamma_i$ contained in $\Sz$ and connecting a given point $p_i\in\inte(F_i\cap F'_i)$ to  a given point in $\relint\fz$. Since $R\in\cF$, we have $\Gamma_i\cap R=\emptyset$.  The union of $\Gamma_1$, $\Gamma_2$ and a continuous path connecting $p_1$ to $p_2$ and contained in $\inte(F_1\cap F'_1)\cup R\cup\inte(F_2\cap F'_2)$ disconnects $\pa \Sp$ in two sets, with $C_m$ in one set and $C_k$ in the other set. This implies $k\neq m$.

We say that two different components $C_h$ and $C_j$  of $\pa \Sp$ are  \emph{$\cF-$connected by  a segment $[a,b]$} if $a\in C_h$, $b\in C_j$, $[a,b]$ is contained in an edge of $P$ which belongs to $\cF$ and  $(a,b)\subset\inte\Sp$. What we have proved so far implies that if $ \tdue( \se_1)\neq \tdue( \se_2)$ then $C_m$ is  $\cF-$connected to another component of $\pa \Sp$ by a segment with endpoint $q$.

We  prove that any two different components $C_h$ and $C_j$ can be $\cF-$connected by at most one segment. Assume that they are $\cF-$connected by two different segments $[a,b]$ and  $[a',b']$, and consider a simple closed curve $\Gamma$ with
\[
\Gamma\subset C_h\cup C_j\cup [a,b]\cup[a',b'],\quad\text{and}\quad [a,b]\cup[a',b']\subset\Gamma.
\]
This curve  is not contained in $\Sz$ and it disconnects $\pa P$ in two  open non-empty sets such that one of them does not intersect $\Sp$. This contradicts the inclusion $(a,b)\cup (a',b')\subset\inte\Sp$. A similar argument proves that there is no finite sequence $i_1,\dots,i_p$, where $p>2$ and $i_l\neq i_j$ whenever $l\neq j$, such that $C_{i_p}$ is $\cF-$connected to $C_{i_{1}}$ and $C_{i_l}$ is $\cF-$connected to $C_{i_{l+1}}$, for each $l=1,\dots,p-1$. We call such a configuration a \emph{closed circuit of $\cF-$connected components}.
Observe that
\begin{equation}\label{diff_uguali}
 \tdue( \se_1)- \tdue( \se_2)=x^-(F_1)-x^-(F_2)= \tdue( \se_1')- \tdue( \se_2').
\end{equation}
The corresponding property holds for any pair of components $\cF$-connected  by $[d,e]$:   the difference between the vectors $\tdue$ of $C_h$ across $d$  equals the difference between the vectors $ \tdue$ of $C_k$ across $e$.

We conclude the proof of the lemma.
Put $i_1=m$ and $i_2=k$.
We prove that if $C_{i_2}$ is $\cF-$connected  only to $C_{i_1}$  then
\begin{equation}\label{om_uguali}
\tdue( \se_1)= \tdue( \se_2).
\end{equation}
In this case there is a sequence $R_1,\dots,R_p$ of  different consecutive edges of $C_{i_2}$ such that $ \se_1'=R_1$ and $\se_2'=R_p$. We have $\tdue(R_i)= \tdue(R_{i+1})$ for each $i$, because $C_{i_2}$ is $\cF-$connected  only to $C_{i_1}$ and the endpoint in common to $R_i$ and $R_{i+1}$ is different from $t$. 
This implies $\tdue( \se'_1)= \tdue( \se'_2)$ and  \eqref{om_uguali}.
Similar arguments prove \eqref{om_uguali} when there is a  component $C_{i_3}$ different from $C_{i_1}$ which is $\cF-$connected  only to $C_{i_2}$.
Therefore, when $ \tdue( \se_1)\neq \tdue( \se_2)$  it is possible to define an  infinite sequence $i_l$, $l=1,2,\dots$ such that  $C_{i_l}$ is $\cF-$connected to $C_{i_{l-1}}$. Each index in the sequence is different from all the previous ones, because if  $i_l=i_p$, for some $l$ and $p$ with $p\leq l$, then there is a closed circuit of $\cF-$connected components.  This construction contradicts the finiteness of the number of components of $\pa \Sp$. 

Claim \eqref{intorno_q} follows by Lemma~\ref{proprieta_frontiere}\eqref{pf_item2b} and the observations contained in the first paragraphs of Case~3 (see also Fig.~\ref{fig_F_connessi}).
\end{proof}

\begin{lemma}\label{jordaninverse}
Let $\Lambda\subset\pa P$ have connected interior and satisfy $\Lambda=\cl\inte\Lambda$, $\Lambda\neq\emptyset$ and $\Lambda\neq\pa P$. Assume that $\pa\Lambda$ is the union of finitely many polygonal curves (each with finitely many edges). The boundary of each component of $\pa P\setminus\Lambda$ is a closed simple curve. 
\end{lemma}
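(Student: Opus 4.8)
The plan is to use that $\pa P$ is homeomorphic to the $2$-sphere, that $\Gamma:=\pa\Lambda$ is a finite graph (a finite $1$-complex) embedded in $\pa P$, and that the hypothesis $\Lambda=\cl\inte\Lambda$ gives $\Gamma=\pa\Lambda=\pa U$, where $U:=\pa P\setminus\Lambda$; in particular every point of $\Gamma$ is simultaneously a limit of points of $\inte\Lambda$ and a limit of points of $U$. First I would record that $\pa P\setminus\Gamma$ has only finitely many connected components, the \emph{faces} of $\Gamma$, and that each face, being connected and disjoint from $\Gamma$, lies entirely in $\inte\Lambda$ or entirely in $U$. Since a face is open and closed in $\pa P\setminus\Gamma$, it follows that each component of $U$ is a single face, that $\inte\Lambda$ (being connected) is one single face of $\Gamma$, and that $\pa V\subset\Gamma$ for every component $V$ of $U$.

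Next I would extract the local structure of $\Gamma$. At an interior point $q$ of an edge of $\Gamma$ the edge splits a small disc around $q$ into two half-discs, each contained in $\inte\Lambda$ or in $U$; both in $\inte\Lambda$ would force $q\in\inte\Lambda$ and both in $U$ would force $q\in U$, each contradicting $q\in\pa\Lambda=\pa U$. Hence along every edge of $\Gamma$ one side lies in $\inte\Lambda$ and the other in $U$. The same dichotomy rules out vertices of degree $0$ or $1$ and shows that the sectors around any vertex alternate between $\inte\Lambda$-sectors and $U$-sectors; thus every vertex of $\Gamma$ has even degree $2d\ge2$, with $d$ sectors of each type, and all of its $\inte\Lambda$-sectors belong to the unique $\inte\Lambda$-face.

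The main point, and the step I expect to be the real obstacle, is to prove that \emph{at every vertex no two sectors belong to the same component of $U$.} Suppose two $U$-sectors $\tau$, $\tau'$ at a vertex $q$ both lay in one component $V$ of $U$ (so necessarily $2d\ge4$). Choose $a\in\tau$, $a'\in\tau'$ near $q$, join $a$ to $a'$ by a simple arc $\gamma$ inside the open connected (hence path-connected) set $V$, perturbed to be disjoint from a short arc $\ell\subset\tau\cup\{q\}$ from $q$ to $a$ and a short arc $\ell'\subset\tau'\cup\{q\}$ from $q$ to $a'$; then $\Gamma_0:=\gamma\cup\ell\cup\ell'$ is a simple closed curve on $\pa P\cong S^2$, so by the Jordan--Schoenflies theorem it bounds two open discs $W_1$, $W_2$. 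Near $q$ the curve $\Gamma_0$ reduces to $\ell\cup\ell'$, which splits a small disc at $q$ into two sectors, each of which meets at least one $\inte\Lambda$-sector of $\Gamma$ (by the alternation property and $\tau\ne\tau'$); hence both $W_1$ and $W_2$ meet $\inte\Lambda$. Since $\inte\Lambda$ is connected, there is an arc in $\inte\Lambda$ joining $W_1$ to $W_2$; this arc avoids $\gamma\subset V\subset U$ and avoids $\ell\cup\ell'$ (whose points other than $q$ lie in $\tau\cup\tau'\subset U$, and $q\in\pa\Lambda$), hence it avoids $\Gamma_0$ altogether, which is impossible since $\Gamma_0$ separates $W_1$ from $W_2$. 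This contradiction proves the claim. Consequently each component $V$ of $U$ meets each of its boundary vertices in exactly one sector, so $\pa V$ is a finite graph in which every vertex has degree $2$, i.e.\ a disjoint union of $m\ge1$ polygonal simple closed curves, and $\cl V$ is a compact $2$-manifold with boundary $\pa V$ (along an edge of $\pa V$ it is locally a half-disc, at a vertex a wedge).

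To finish I would show $m=1$. The complement $\pa P\setminus V$ equals $\Lambda\cup\bigcup_{V'\ne V}\cl V'$, a union of the connected set $\Lambda=\cl\inte\Lambda$ with the connected sets $\cl V'$, each of which meets $\Lambda$ along its nonempty boundary $\pa V'\subset\Gamma\subset\Lambda$; hence $\pa P\setminus V$ is connected, and since $V$ is an open proper subset of $\pa P\cong S^2$ it is classical that this forces $V$ to be simply connected. Then $\chi(V)=1$, hence $\chi(\cl V)=1$ (the inclusion $V\hookrightarrow\cl V$ is a homotopy equivalence, via a collar of $\pa V$); capping the $m$ boundary circles of $\cl V$ with discs produces a closed surface of Euler characteristic $1+m\le2$, whence $m=1$. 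Therefore $\pa V$ is a single polygonal simple closed curve, which is the assertion.
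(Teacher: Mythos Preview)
Your proof is correct and structurally sound; the local alternation analysis, the Jordan-curve contradiction in Step~4, and the Euler characteristic count in Step~6 all go through. But your route differs substantially from the paper's.

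The paper argues as follows. Given a component $A$ of $\pa P\setminus\Lambda$, it first shows (as you do) that $\pa P\setminus\cl A$ is connected. Then, by walking along edges of $\pa\Lambda$ that border $A$, it extracts \emph{one} simple closed curve $\gamma\subset\pa A$. The Jordan curve theorem splits $\pa P$ into two discs $\Gamma_1,\Gamma_2$; since $A$ is connected and misses $\gamma$, say $A\subset\Gamma_1$, and since $\pa P\setminus\cl A$ is connected, contains $\Gamma_2$, and misses $\gamma$, one gets $\cl\Gamma_1\subset\cl A$. The two inclusions squeeze to $\cl A=\cl\Gamma_1$, and passing to interiors (using $A=\inte\cl A$) gives $A=\Gamma_1$ and hence $\pa A=\gamma$. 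So the paper never needs to know that $\pa A$ is a priori a union of circles, nor does it invoke Euler characteristic or simple connectivity; a single Jordan curve plus the connectedness of the complement does the whole job.

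Your argument instead first proves the finer structural statement that $\pa V$ is a disjoint union of $m$ circles (via the sector analysis and the separating-curve contradiction), and only then reduces to $m=1$ by topology: $\pa P\setminus V$ connected $\Rightarrow$ $V$ simply connected $\Rightarrow$ $\chi(\cl V)=1$ $\Rightarrow$ capping gives $\chi=1+m\le2$. This is heavier machinery but yields more along the way (the manifold-with-boundary structure of $\cl V$). It is essentially the Alexander-duality viewpoint the paper mentions in the remark following its proof: the rank of $H_1(\pa A)$ equals the number of components of $\pa P\setminus\pa A$ minus one, which is~$1$, forcing $\pa A$ to be a single circle. So your approach is correct and is the homological alternative the paper alludes to; the paper's own proof is the shorter, more hands-on variant.
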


\begin{proof}
Let $A$ be a component of $\pa P\setminus\Lambda$. Clearly we have $\pa A\subset\pa \Lambda$. 

Let us prove that $\pa P\setminus \cl A$ is connected. It suffices to prove that each point $q\in \pa P\setminus \cl A$ is path-connected to  $\inte\Lambda$ by a continuous curve contained in  $\pa P\setminus \cl A$. If $q\in\pa\Lambda\setminus \cl A$ this is obvious due to the simple structure of $\pa \Lambda$. If $q$ belongs to some component $A'$ of $\pa P\setminus\Lambda$, with $A'\neq A$, there is a curve contained in $A'$ connecting $q$ to some point of $\pa \Lambda\setminus\cl A$. This curve can be extended to a curve contained in $\inte \Lambda\cup A'\cup\{q\}$ connecting $q$ to $\inte \Lambda$.

Consider $\pa\Lambda$ as a graph. If $[a_1,a_2]$ denotes an edge of $\pa\Lambda$  then there is at least another edge  of $\pa\Lambda$ different from $[a_1,a_2]$ and with endpoint $a_2$, because otherwise $\Lambda\neq\cl\inte\Lambda$. If in addition $[a_1,a_2]$  is contained in $\pa A$, then one of the edges of $\pa\Lambda$ with endpoint $a_2$ and different from $[a_1,a_2]$, say $[a_2,a_3]$, is necessarily an edge of $\pa A$. The iteration of this construction defines a closed simple curve $\gamma\subset\pa A$.

Let $\Gamma_1$ and $\Gamma_2$ be the components of $\pa P\setminus\gamma$. Since $A$ is connected and does not meet $\gamma$ we have either $A\subset\Gamma_1$ or $A\subset\Gamma_2$. Assume 
\[
A\subset\Gamma_1.
\]
Since $\pa P\setminus \cl A$ is connected, contains $\Gamma_2$ and does not meet $\gamma$, we have  $\pa P\setminus \cl A\subset\Gamma_2$, that is,
\[
\cl A\supset \cl\Gamma_1.
\]
The previous inclusions imply $\cl A= \cl\Gamma_1$. Since $A=\inte\cl A$ (it is an easy consequence of the identity $\Lambda=\cl\inte\Lambda$) and $\Gamma_1=\inte\cl \Gamma_1$ (it follows from the definition of $\Gamma_1$) the identity $\cl A= \cl\Gamma_1$ implies $A=\Gamma_1$. Therefore $\pa A=\gamma$.
\end{proof}
\begin{remark}
This lemma, which seems an inverse form of Jordan Curve Theorem, does not seem to be available in the literature.  Andreas Zastrow told us that it can be derived from Alexander Duality in Algebraic Topology; see~\cite[p.~179]{Gre}. It can be proved that this duality implies that the rank of the first homology group $H_1(\pa A)$ of $\pa A$, (with $\pa A$ thought as a graph embedded in $\pa P$) equals the number of the components of $\pa P\setminus \pa A$ minus $1$. Since, as proved above, $\pa P\setminus \pa A$ has two components, the rank of $H_1(\pa A)$ is $1$. This fact and the property $A=\inte\cl A$ imply that $\pa A$ is a simple closed curve.
\end{remark}

\begin{lemma}\label{globale_frontiera} 
Assume that $P$ is not a translation or a reflection of $P'$ and, for each $m=1,\dots,s$, let $\tdue_m$ denote the vector associated to $C_m$ by Lemma~\ref{om_costante}.
It is possible to choose $\fz$ so that $\tdue_i= \tdue_j$ for each $i,j=1,\dots,s$.
\end{lemma}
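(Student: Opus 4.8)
The plan is to analyze how the vectors $\tdue_m$ associated to the components $C_1,\dots,C_s$ of $\pa\Sp$ depend on the choice of the starting face $\fz$. Recall (Lemma~\ref{proprieta_frontiere} and Lemma~\ref{om_costante}) that, having fixed a positive face $\fz$ with $x^+(\fz)=0$, each $C_m$ is a simple closed polygonal curve (by Lemma~\ref{jordaninverse} applied to $\Lambda=\Sp$) to which is attached a single vector $\tdue_m\in\Real^3$ with the property that $\pa P$, $\pa P'$, $-\pa P+\tdue_m$ and $-\pa P'+\tdue_m$ all coincide in a one-sided neighbourhood of $C_m$, and that $\tdue_m=x^-(F)$ (or $x^-(\se)$) for the negative faces $F$ lying just outside $C_m$. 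The key observation is that crossing a component $C_m$ of $\pa\Sp$ amounts to passing from the region where $\pa P'$ looks like $\pa P'+x^+(\fz)$ to a region where it looks like $-\pa P'+\tdue_m$; so $C_m$ is the boundary between "$\Sp$" built from $\fz$ and a neighbouring set $\Sp$ that one would build starting from a negative face adjacent to $C_m$. I would make this precise by showing that if $\tdue_i\neq\tdue_j$ for two components, then one can replace $\fz$ by a suitable negative face $\fz_1$ (with $x^-(\fz_1)=\tdue_i$, say) so that the new set $\Sp_1$ is strictly larger, in the sense that it contains $\Sp$ together with at least one more facet across $C_i$; iterating, the sets $\Sp$ strictly increase, which is impossible since $\pa P$ has finitely many facets.

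**The main steps.** First I would set up the "gluing" mechanism: given $\fz$ as in Lemma~\ref{om_costante} and a component $C_m$ with vector $\tdue=\tdue_m$, examine the face $\fz_1$ of $P$ lying immediately on the far side of $C_m$. By Lemma~\ref{proprieta_frontiere}\eqref{pf_item2}, $\fz_1$ (a facet $F$, or an edge $\se$) is negative with $x^-(\fz_1)=\tdue$. Then I would form the analogous set $\Sp_1$ starting from $\fz_1$ as in Definition~\ref{definizione_Sp} (using $x^-$, since $\fz_1$ is negative, and noting $x^-(\fz_1)=\tdue$); equivalently, one may first translate/reflect $P'$ to $-P'+\tdue$ so that $\fz_1$ becomes a face of it with matching support cone, then reset notation so that the new "$x^+$" vanishes. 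The crucial claim is that $\Sp\subset\Sp_1$: every facet $F\notin\cF$ meeting $\Sp$ still satisfies $F\cap\pa(-P'+\tdue)=F\cap F'$ on $\Sp$ (because on $\Sp$ we have $\pa P'=-\pa P'+\tdue$ by the coincidence property, at least near $\pa\Sp$, and in the interior of $\Sp$ the facets $F\cap F'$ are unchanged); moreover the exclusion set $\cF_1$ for $\fz_1$ is contained in $\cF$ because $x^-(F)\ne\tdue$ forces $x^+(F)\ne x^+(\fz)=0$. Hence the component of $\Sz_1$ through $\relint\fz_1$ contains $\Sp$ and, in addition, the facets just across $C_m$ that were excluded before (they belong to $\cF$ for $\fz$ but not for $\fz_1$). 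So $\Sp_1\supsetneq\Sp$.

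**Concluding the induction and the obstacle.** From $\Sp_1\supsetneq\Sp$ I would argue as follows: if for the new choice $\fz_1$ all the vectors $\tdue'_1,\dots,\tdue'_{s_1}$ associated to the components of $\pa\Sp_1$ agree, we are done (after the translation/reflection that made $\fz_1$ "positive with $x^+=0$"). Otherwise repeat the construction with $\fz_1$ in place of $\fz$, obtaining $\Sp_2\supsetneq\Sp_1$, and so on. Since each $\Sp_k$ is a finite union of (parts of) facets of $P$ and the inclusions are strict, the process terminates, and at termination the chosen face yields all $\tdue_m$ equal. The \textbf{main obstacle} I anticipate is the bookkeeping in the claim $\Sp\subset\Sp_1$ — specifically, verifying that the coincidence "$\pa P'=-\pa P'+\tdue$ on a neighbourhood of $\Sp$" (not merely near $\pa\Sp$) is strong enough, and handling the low-dimensional cases where $\fz_1$ is an edge rather than a facet, where one must check that the support-cone equalities in \eqref{alt1}--\eqref{alt2} transfer correctly and that $\relint\fz_1$ indeed lands in $\inte\Sp_1$ via Lemma~\ref{relintfz}. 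A secondary delicate point is ensuring that the strict enlargement $\Sp_1\supsetneq\Sp$ really happens — i.e. that at least one facet genuinely crosses from "excluded" to "included" — which is exactly where the hypothesis $\tdue_i\ne\tdue_j$ (equivalently, that $\fz_1$ is a nontrivial negative face with $x^-(\fz_1)\ne 0$ relative to the current normalization) gets used; if instead one tried this with $\tdue=0$ the construction would stall, consistent with the fact that the conclusion can already hold for the original $\fz$.
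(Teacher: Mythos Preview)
Your monotone-enlargement strategy has a genuine gap at the step $\Sp\subset\Sp_1$, and the gap is exactly the one you flag as the ``main obstacle''; unfortunately it is fatal rather than merely technical. Two independent pieces fail. First, the containment $\cF_1\subset\cF$ is false: $\cF$ (for a positive $\fz$ with $x^+(\fz)=0$) consists of \emph{positive or neutral} faces with $x^+\neq0$, whereas $\cF_1$ (for a negative $\fz_1$ with $x^-(\fz_1)=\tdue$) consists of \emph{negative or neutral} faces with $x^-\neq\tdue$. A purely negative face $F$ with $x^-(F)\neq\tdue$ lies in $\cF_1$ but not in $\cF$; such a face may well meet $\inte\Sp$, and then it is excised when forming $\Sz_1$, breaking the inclusion. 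Second, and more basically, $\Sp_1$ is a subset of $\pa P\cap\pa(-P'+\tdue)$, so $\Sp\subset\Sp_1$ would require $\Sp\subset -\pa P'+\tdue$. Lemma~\ref{om_costante} gives this only in a one-sided neighbourhood of the particular component $C_m$ whose vector is $\tdue$; near a component $C_j$ with $\tdue_j\neq\tdue$ you instead get $\Sp\subset -\pa P'+\tdue_j$, which is incompatible with $\Sp\subset -\pa P'+\tdue$ (two distinct translates of $-P'$ cannot share a two-dimensional patch of boundary with matching cones). So the inclusion $\Sp\subset\Sp_1$ fails precisely in the situation you want to exploit, namely when the $\tdue_m$ are not all equal.

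The paper's argument is organised quite differently. It does not try to enlarge $\Sp$; instead it introduces an integer invariant $\siz(G)$ (the minimum, over components $A$ of $\pa P\setminus\Sp(G)$, of the number of facets of $P$ meeting $\pa P\setminus A$) and chooses $\fz$ to minimise it. Assuming $\tdue_1\neq\tdue_i$ for some $i$, one takes $G^1$ on $C_i$ and proves not that $\Sp(G^1)\supset\Sp(\fz)$ but rather that $\Sp(G^1)$ avoids the extremal complementary component $A^0$. If $\siz(G^1)\geq\siz(\fz)$ one then extracts strong structural constraints (every edge of $\pa A^0$ is an edge of $P$ and of $P'$; the facet $F$ at such an edge lies on a fixed cylinder $\Delta$ with axis direction $\tdue_i-\tdue_1$; etc.), and finishes with a winding-number argument on $\Delta$ that forces $\pa\Sp(\fz)$ to be connected, contradicting $\tdue_1\neq\tdue_i$. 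The minimality device is what substitutes for your monotone enlargement, and the cylinder/winding-number analysis is where the real geometric work happens.
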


\begin{proof} Given a positive or negative face $G$ of $P$, let $\cF(G)$, $\Sz(G)$ and $\Sp(G)$ be as in Definition~\ref{definizione_Sp}, with $G$ substituting $\fz$. We  associate to $G$ a positive integer $s(G)$ as follows. Given  $B\subset\pa P$ define
\begin{gather*}
\size(B)=\text{number of facets of $P$ whose interior intersects $B$}\\
\intertext{and}
\siz(G)=\inf_{\text{$A$ component of $\pa P\setminus\Sp(G)$}}
\size(\pa P\setminus A).
\end{gather*}
Let $\fz$ denote a face which minimises $\siz(G)$ over all positive or negative facets of $P$.
Let $C_1,\dots,C_s$ be the components of $\pa\Sp(\fz)$ and let $y_i$ be the vector associated by Lemma~\ref{om_costante} to $C_i$. We claim that $y_i=y_j$ for each $i,j=1,\dots,s$. 

Assume the contrary and let $\Az$ be a component of $\pa P\setminus\Sp(\fz)$ which attains the infimum in the definition of $\siz(\fz)$. By Lemma~\ref{jordaninverse}, $\pa \Az$ is a simple closed curve contained in one of the components $C_m$. Assume for instance that $\pa \Az\subset C_1$ and let $i\in\{1,\dots,s\}$ satisfy 
\begin{equation}\label{om_diversi}
\tdue_1\neq \tdue_i.
\end{equation}
We prove that there exists a positive or negative face $G^\un$ of $P$ such that
\begin{equation}\label{size_Gamma}
\siz(G^\un)<\siz(\fz).
\end{equation}
After possibly substituting $P'$ with $P'_{\fz}$, we may assume $\fz$ positive and $x^+(\fz)=0$. 
Let us define $G^\un$. Choose an edge $ \se^\un$ of $C_i$ and let $F^\un$ be the facet of $P$ associated to $ \se^\un$ by Lemma~\ref{proprieta_frontiere}. When $\se^\un$ is an edge of $P$ and $P'$ we define $G^\un= \se^\un$, otherwise we define   $G^\un=F^\un$. By Lemma~\ref{proprieta_frontiere},  $G^\un$ is negative and  $x^-(G^\un)= \tdue_i$. Therefore $P_{G^\un}=-P'+y_i$. Moreover, by definition,
$\cF(G^\un)=\set{\text{edges or facets  $F$ of $P$ which are  neutral or negative and  satisfy $x^-(F)\neq \tdue_i$}}$. 
Let us prove 
\begin{equation}\label{Gamma_inclusi}
\Sp(G^\un)\cap \Az=\emptyset.
\end{equation}

Let $S$ be an edge of $\pa\Az$ and  let $F$  be the facet of $P$ associated to $S$ by Lemma~\ref{proprieta_frontiere}.
By Lemma~\ref{proprieta_frontiere}, either $S$ is an edge of $P$ and $P'$, is negative and $x^-(S)=y_1$, or else $F$ is negative and $x^-(F)=y_1$. Since $y_1\neq y_i$, in the first case we have $S\in\cF(G^\un)$, while in the second case we have $F\in\cF(G^\un)$. In both cases we have $S\cap\Si(G^\un)=\emptyset$. Thus we have $\pa \Az\cap\Si(G^\un)=\emptyset$. 
Since $\Sp(G^\un)$ is the closure of a component of $\Si(G^\un)$, the previous identity implies either   $\Sp(G^\un)\subset\cl\Az$ or \eqref{Gamma_inclusi}. Since $\Sp(G^\un)$ contains $\relint G^\un$ (by Lemma~\ref{relintfz}), and this set meets $C_i$, which does not intersect $\cl\Az$, we have \eqref{Gamma_inclusi}.

Let $A^\un$ be the component of $\pa P\setminus\Si(G^\un)$ which contains $\Az$. In order to prove  \eqref{size_Gamma} it suffices to prove $\size(\pa P\setminus A^\un)<\size(\pa P\setminus\Az)$. Assume the contrary, that is, in view of the inclusion $\Az\subset A^\un$, assume
\begin{equation*}
\size(\pa P\setminus A^\un)=\size(\pa P\setminus\Az).
\end{equation*}
Let $S$ and  $F$  be as above. The previous equality  implies  
\begin{equation}\label{size_uguali}
(\inte F)\cap \pa P\setminus A^\un\neq\emptyset,
\end{equation} since $(\inte F)\cap \pa P\setminus \Az\supset(\inte F)\cap \Sp(\fz)\neq\emptyset$. Let us prove the following claims:
\begin{enumerate}
\item\label{it_a} both $S$ and $-S+\tdue_1$ are edges of $P$, $P'$, $-P+\tdue_1$ and $-P'+\tdue_1$, condition \eqref{pf_coni_uguali} (with $y=\tdue_1$) holds  and we have $\pa \Az\cup(-\pa \Az+ \tdue_1)\subset\pa P\cap \pa P'$;
\item\label{it_b} the facet $F$ contains a translate of $\tdue_i- \tdue_1$;
\item\label{it_c} at each point of $\pa \Az\cup(-\pa \Az+ \tdue_1)$ a line parallel to $ \tdue_i- \tdue_1$ supports $P$ and $P'$;
\item\label{it_d}  there exists a neighbourhood of $\relint S$ such that in that neighbourhood we have
$\pa P=\pa P'$ on one side of $S$ and
$\pa P\cap \pa P'=\emptyset$ on the other side of $S$. 
The same property holds for some neighbourhood  of $\relint(-S+ \tdue_1)$. 
\end{enumerate} 

To prove \eqref{it_a}, let us show that $S$ is an edge of $P$ and of $P'$. If this is not true then, as proved above, we have $F\in\cF(G^\un)$ and, therefore, $F\cap\Si(G^\un)=\emptyset$. This implies $(\inte F)\cap\Sp(G^\un)=\emptyset$. Since $\inte F$ is clearly path-connected to $\Az$ (through $S$), we have $\inte F\subset A^\un$, which contradicts \eqref{size_uguali}.
The rest of \eqref{it_a} follows by Lemma~\ref{proprieta_frontiere}.
To prove \eqref{it_b} observe that, by \eqref{pf_coni_uguali}, $-P'+ \tdue_1$ has a facet $F''$ coplanar to $F$. If $p\in(\inte F)\cap \pa P\setminus A^\un$, then the segment connecting $p$ to a point $q$ of $S$ has to meet $\pa A^\un$, because  $q$ is an accumulation point of $\Az\subset A^\un$. A fortiori $F$ contains a segment $S'$ of $\pa \Sp(G^\un)$. By Lemma~\ref{proprieta_frontiere}, applied to $S'$ and to the pair $P$ and $P'_{G^\un}$, there is a facet of $P'_{G^\un}=-P'+\tdue_i$ coplanar to $F$. This facet necessarily coincides with $F''+\tdue_i-\tdue_1$.  This implies the claim. 
Claim \eqref{it_c} follows by \eqref{it_b} and the existence of a facet of $P'$ coplanar to $F$, which is proved in Lemma~\ref{proprieta_frontiere}. Claim \eqref{it_d} follows by \eqref{pf_coni_uguali}, with $y=\tdue_1$.

Let $\Delta$ be the cylindrical surface which supports $P$ and  with generatrix parallel to $ \tdue_i- \tdue_1$ and let $l$ denote a generic line contained in $\Delta$. We have $F\subset \Delta$ and $\pa\Az\cup(-\pa\Az+ \tdue_1)\subset \Delta$, by \eqref{it_b} and \eqref{it_c}. Moreover, Claim~\eqref{it_c}  implies
\begin{equation}\label{convessita_verticale}
[p,q]\subset \pa P\cap\pa P'\quad\text{whenever}\quad p,q\in l\cap \Big(\pa\Az\cup(-\pa\Az+ \tdue_1)\Big).
\end{equation}

Let  $f$ be an homeomorphism between $S^1$ and $\pa\Az$ and let $h=g\circ f$, where  $g:\Delta\to\Delta\mathbin| (y_1-y_i)^\perp$ is the orthogonal projection. The set $\Delta\mathbin| (y_1-y_i)^\perp$ is homeomorphic to $S^1$ and $h$ can be seen as a map from $S^1$ to $S^1$. Its  winding number is  $0$, $1$ or $-1$, since the curve $\pa\Az$ is simple. 

Assume that the winding number of $h$ is $0$. In this case $h$ is homotopic to a constant map. Since the projection $g$ is the identity between the first 
homotopy groups of $\Delta$ and of $\Delta\mathbin|(y_1-y_i)^\perp$, also $f$, as a map from $S^1$ to $\Delta$, is homotopic to the constant map. In this case $\Delta\setminus\pa\Az$ contains a bounded component $N$. Let us prove
\begin{equation}\label{spucln}
\Sp(\fz)=\cl N.
\end{equation}
The property \eqref{convessita_verticale} implies $N\subset\pa P\cap\pa P'$. If $l$ is as above and $l$ intersects transversally  the relative interior of an edge of $\pa\Az$, then $l\cap N$ is contained in the facet of $P$ and in the facet of $P'$ associated to this edge. Since their intersection is contained in $\Sp$, we have  $l\cap N\subset\Sp$. A continuity argument implies $\cl N\subset \Sp$. To prove the converse inclusion, observe that $\pa\Az\cap\Sz(\fz)=\emptyset$, because no  point in $\pa\Az$ has a neighbourhood contained in $\pa P\cap\pa P'$, by \eqref{it_d}. Therefore  $\Sp(\fz)$ is contained in $\cl N$ or in $\pa P\setminus N$. The inclusion $\cl N\subset \Sp(\fz)$ implies $\cl N= \Sp(\fz)$.

The previous argument also shows that $\Sp(\fz)$ is convex in the $y_1-y_i$ direction. This property and the connectedness of $\Sp(\fz)$ imply that  $\pa \Sp(\fz)$ has only one  component. This contradicts \eqref{om_diversi}.

Now assume  that the winding number of $h$ is $1$ or $-1$. Choose an orientation on $S^1$ and on $\Delta\mathbin| (y_1-y_i)^\perp$. We claim that $h$ is non-increasing or non-decreasing. If not, there is a line $l$ in $\Delta$ which intersects transversally  the relative interior of at least three segments $S_1$, $S_2$ and $S_3$ of $\pa\Az$. These segments are necessarily contained in the same facet of $\Delta$, and, if the point $S_2\cap l$ lies in between $S_1\cap l$ and $S_3\cap l$, then  $S_2\cap l$ is in the interior of the quadrilateral $\conv(S_1\cup S_2)$, which is contained in $\pa P\cap\pa P'$. This contradicts \eqref{it_d}. Let us prove that $h$ is strictly increasing or strictly decreasing. Assume the contrary. Then there exist consecutive segments $S_1$, $S_2$ and $S_3$ of $\pa\Az$, with $S_2$ parallel to the generatrix of $\Delta$ and $S_1$ and $S_3$ on opposite sides of the line containing $S_2$. Since both triangles $\conv(S_1\cup S_2)$ and $\conv(S_2\cup S_3)$ are contained $\pa P\cap\pa P'$ (by Claims~\eqref{it_a} and \eqref{it_c}), $P$ and $P'$ coincide on both sides of $S_2$. This contradicts Claim~\eqref{it_d}.

Thus each line $l\subset\Delta$ intersects $\pa \Az$ (and also $-\pa\Az+y_1$) in exactly one point. Arguing as in the proof of \eqref{spucln} one shows  that $\Sp(\fz)$ is the union of the segments parallel to $y_1-y_i$ with endpoints in $\pa \Az\cup(-\pa\Az+y_1)$. Thus $\pa \Sp(\fz)$ has at most two components $C_1=\pa \Az$ and $C_2=-\pa\Az+y_1$ and we have $i=2$. Each edge of $-\pa\Az+y_1=C_2$ is an edge of $P$ and $P'$, by Claim~\eqref{it_a}, and this implies that $G^1$ is an edge of $P$ and $P'$. Claim~\eqref{it_a} also implies that $G^1$ is an edge   of $-P'+y_1$ and 
\begin{equation*}\label{coni1}
\cone(P,G^1)=\cone(-P'+y_1,G^1).
\end{equation*}
Lemma~\ref{proprieta_frontiere} proves that $G^\un$ is also an edge of $-P'+y_2$ and that
\begin{equation*}\label{coni2}
\cone(P,G^1)=\cone(-P'+y_2,G^1),
\end{equation*}
since $G^\un$ is an edge of $C_2$.
The previous two identities imply $y_1=y_2$, contradicting \eqref{om_diversi}.
\end{proof}

\begin{lemma}\label{sigma_meno}
Assume that $P$ is not a translation or a reflection of $P'$. Let $\fz$ be a positive or negative face of $P$ such that the vectors associated by Lemma~\ref{om_costante} to each component of $\pa\Sp$ coincide, and let $y$ denote this vector.
Then there exists a  component of $\pa P \setminus(-\pa\Sp+y)$ intersecting $-\Sp+y$ whose boundary is $-\pa\Sp+y$. If $\Sm$ denotes the closure of this component then we have
\begin{equation}\label{formula_sigma_meno}
\Sm\subset\pa P\cap\pa P'_{\fz}\quad\text{and}\quad
\Sm\neq-\Sp+y.
\end{equation}
\end{lemma}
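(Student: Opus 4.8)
The plan is to reflect the region $\Sp$ through the point $y/2$ and to show that the reflected set is glued, along the full curve $-\pa\Sp+y$, to a genuine boundary region $\Sm$ of $P$. First I would normalize: after replacing $P'$ by $P'_\fz$ we may assume $\fz$ is positive and $x^+(\fz)=0$, so that $P'_\fz=P'$, and then $DP=DP'$ by Lemma~\ref{corpodiff}; note that the four convex bodies $P$, $P'$, $-P+y$, $-P'+y$ then all have difference body $DP$. By Lemma~\ref{jordaninverse} applied to $\Lambda=\Sp$, the set $\pa\Sp$ is a disjoint union of finitely many simple closed polygonal curves $C_1,\dots,C_s$, each the boundary of a component of $\pa P\setminus\Sp$; and $s\geq 1$ because Lemma~\ref{centralmente_simmetrico} produces a negative facet of $P$, which cannot be a facet of $P'$ (a common facet would be positive or neutral), so $\Sp\neq\pa P$.

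Let $r$ be the involution $x\mapsto -x+y$, so $r(P)=-P+y$ and $r(P')=-P'+y$. By the hypothesis on $\fz$ and Lemma~\ref{om_costante} there is an open neighbourhood $W$ of $\pa\Sp$ with $\Sp\cap W\subset(-\pa P+y)\cap(-\pa P'+y)$, and recall $\Sp\subset\pa P\cap\pa P'$. Applying $r$ and writing $W^-=r(W)$, a neighbourhood of $-\pa\Sp+y$, we obtain
\[
(-\Sp+y)\cap W^-\ \subset\ \pa P\cap\pa P'.
\]
In particular $-\pa\Sp+y=\bigcup_{m}(-C_m+y)$ is a disjoint union of $s$ simple closed curves contained in $\pa P\cap\pa P'$; moreover, since trivially $-\Sp+y\subset(-\pa P+y)\cap(-\pa P'+y)$, near each curve $-C_m+y$ the surface $\pa P$ (respectively $\pa P'$) coincides with $-\pa P+y$ (respectively $-\pa P'+y$) on the side of $-C_m+y$ facing $-\Sp+y$, and on that side both surfaces coincide locally with $-\Sp+y$.

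Now consider the components of the open set $\pa P\setminus(-\pa\Sp+y)$. For each $m$ the portion of $-\Sp+y$ immediately inside $-C_m+y$ lies in $\pa P$, hence in a component $N_m$ of $\pa P\setminus(-\pa\Sp+y)$ lying on the side of $-C_m+y$ facing $-\Sp+y$. I claim $N_1=\dots=N_s$; granting this, set $N:=N_1$ and $\Sm:=\cl N$, so that $N\cap(-\Sp+y)\neq\emptyset$ and $\pa N$ meets every $-C_m+y$, whence — since $\pa N\subset-\pa\Sp+y$ — we get $\pa\Sm=-\pa\Sp+y$, as the lemma asserts. To prove the claim I would apply Lemma~\ref{jordaninverse} to the polytope $-P+y$ with $\Lambda=-\Sp+y$ (which is the closure of its relative interior in $\pa(-P+y)$, has connected relative interior, is nonempty, and, since $\Sp\neq\pa P$, is not all of $\pa(-P+y)$): this yields exactly $s$ complementary components $B_1,\dots,B_s$ with $\pa B_m=-C_m+y$; matching $B_m$ near $-C_m+y$ with the pieces of $\pa P$ via the local coincidence of the previous step, together with the connectedness of $-\Sp+y$, forces all the $N_m$ to coincide.

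It remains to check $\Sm\subset\pa P\cap\pa P'$ and $\Sm\neq-\Sp+y$. By construction $\Sm\subset\pa P$. For $\Sm\subset\pa P'$: near $\pa\Sm=-\pa\Sp+y$ the surfaces $\pa P'$ and $-\pa P'+y$ coincide on the $\Sm$-side; since $P'$ and $-P'+y$ are convex bodies with the same difference body and their boundaries agree along the whole boundary curve of the complementary region $N$, Lemma~\ref{jordaninverse} applied to $P'$ and to $-P'+y$ identifies $\Sm$ as a common boundary region of both, so $\pa P'=-\pa P'+y$ on $\Sm$, and in particular $\Sm\subset\pa P'=\pa P'_\fz$. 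Finally, if $\Sm=-\Sp+y$ then $-\Sp+y\subset\pa P$, hence (reflecting) $\Sp\subset-\pa P+y$; combined with the one-sided coincidences above and their reflections, the same propagation arguments force $\pa P$ and $-\pa P+y$ to agree on $\Sp\cup(-\Sp+y)$ together with collars of $\pa\Sp$ and $-\pa\Sp+y$, and since $\Sp,-\Sp+y\subset\pa P'$ this eventually contradicts that $P$ is not a translation or a reflection of $P'$; so $\Sm\neq-\Sp+y$. The main obstacle is exactly the claim $N_1=\dots=N_s$ (equivalently, that the mirror of $\Sp$ attaches to $\pa P$ as a single region bounded by all of $-\pa\Sp+y$) together with the companion fact $\Sm\subset\pa P'$; both are handled by applying Lemma~\ref{jordaninverse} in turn to $P$, to $-P+y$, to $P'$ and to $-P'+y$, and by using throughout the local surface identifications obtained by reflecting Lemma~\ref{om_costante} and the common difference body $DP$ of the four bodies.
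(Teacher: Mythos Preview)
Your proposal has the right shape but leaves the two central steps unjustified; the paper handles them with different, sharper ideas.

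\textbf{Existence of a single component with boundary $-\pa\Sp+y$.} Your argument that $N_1=\dots=N_s$ does not go through. You appeal to ``the connectedness of $-\Sp+y$'', but that connectedness holds in $\pa(-P+y)$, not in $\pa P$. The collars of the curves $-C_m+y$ on the $-\Sp+y$ side lie in $\pa P\cap\pa(-P+y)$, but the interior of $-\Sp+y$ need not lie in $\pa P$ at all, so there is no path in $\pa P$ linking these collars. Applying Lemma~\ref{jordaninverse} to $-P+y$ produces components $B_m$ on the \emph{outside} of $-\Sp+y$, where you have no local coincidence, so they cannot be ``matched'' to pieces of $\pa P$. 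The paper's device is a radial projection: pick $p_0\in\relint P\cap\relint(-P+y)$ and project $\pa(-P+y)$ to $\pa P$ along rays from $p_0$. This homeomorphism fixes the curves $-\pa\Sp+y$ (they lie on both surfaces) and carries the connected open set $\inte(-\Sp+y)$ to a connected open subset of $\pa P$ whose boundary is exactly $-\pa\Sp+y$; that subset is the desired component. This is the missing idea.

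\textbf{The inclusion $\Sm\subset\pa P'$.} Invoking Lemma~\ref{jordaninverse} ``to $P'$ and to $-P'+y$'' does not identify $\Sm$ with a region of $\pa P'$; that lemma only describes boundaries of complementary components inside a single surface. The paper instead runs the same radial-projection construction on $P'$ to obtain an analogous region $\Sm'\subset\pa P'$, and then proves $\Sm=\Sm'$ by showing that the sets of vertices of $P$ in $\Sm$ and of $P'$ in $\Sm'$ coincide. The vertex argument uses synisothesis concretely: if $P_w$ is a vertex in $\Sm$, then convexity and the one-sided coincidence force $(-P+y)_w\in-\Sp+y$, hence $(-P)_w=(-P')_w$ (since $-\Sp\subset-\pa P\cap-\pa P'$), and then $DP=DP'$ together with~\eqref{facce_corpodiff} gives $P_w=P'_w$.

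\textbf{The inequality $\Sm\neq-\Sp+y$.} Your ``propagation'' sketch aims at the wrong contradiction. The paper's argument is direct and short: if $\Sm=-\Sp+y$ then $\fz\subset\Sp=-\Sm+y\subset-\pa P'+y$, so $\fz$ is a face of $-P'+y$ as well as of $P'$; since $\relint\fz\subset\inte\Sp$ (Lemma~\ref{relintfz}) one gets $\cone(P,\fz)=\cone(P',\fz)=\cone(-P'+y,\fz)$, making $\fz$ neutral and contradicting its choice.
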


\begin{remark}\label{recall}We recall that $\Sp\subset\pa P\cap\pa P'_{\fz}$, by definition, and that $P$,  $P'$, $-(P_\fz)+y$ and $-(P'_\fz)+y$ coincide in an one-sided neighbourhood of $\pa\Sm=-\pa\Sp+y$, by Lemma~\ref{om_costante}.\end{remark}

\begin{proof}
Up to a translation or a reflection of $P'$,  we may assume $\fz$ positive and $\Pg=P'$. 
Choose $p_0\in\relint P\cap\relint(-P+y)$ and consider the homeomorphism from $-\pa P+y$ to $\pa P$ which associates to $p\in-\pa P+y$ the intersection of $\pa P$ with the ray issuing from $p_0$ and containing $p$. This homeomorphism maps $\inte(-\Sp+y)$ in a connected subset of $\pa P$ intersecting $-\Sp+y$ and whose boundary is $-\pa\Sp+y$.

Similar arguments prove that there exists a  component (whose closure we denote by $\Sm'$) of $\pa P' \setminus(-\pa\Sp+y)$ intersecting $-\Sp+y$ whose boundary is $-\pa\Sp+y$. 

Let us show that the set $V_P$ of the vertices of $P$ in $\Sm$ coincides with the set $V_{P'}$ of the vertices of $P'$ in $\Sm'$.  Let $w\in S^2$ be such that $P_w\in V_P$. Up to a perturbation of $w$, we may assume that $(-P+y)_w$ is a vertex too. Since $P$ and $-P+y$ coincide in an one-sided neighbourhood of $\pa \Sm=-\pa\Sp+y$ and they are convex, $(-P+y)_w$ is contained in $-\Sp+y$. Since $-\Sp\subset\pa(- P)\cap\pa(- P')$ we have $(-P)_w=(-P')_w$. The latter, the identity $DP=DP'$ and \eqref{setting} imply $P_w=P'_w$ and prove $V_P\subset V_{P'}$. Similar arguments prove $V_P\supset V_{P'}$. 

The identity $V_P=V_{P'}$ implies $\Sm=\Sm'$ and $\Sm\subset\pa P\cap\pa P'_{\fz}$.

Assume $\Sp\neq-\Sm+\tdue$ false.   Since $\fz\subset\Sp=-\Sm+\tdue$,  $\fz$ is also a face of $P'$ and of $-P'+y$. Moreover, the inclusion $\relint \fz\subset\inte \Sp$, proved in Lemma~\ref{relintfz}, implies $\cone(P,\fz)=\cone(P',\fz)=\cone(-P'+y,\fz)$. This proves that $\fz$ is neutral, contradicting the definition of $\fz$. 
\end{proof}

\section{Proof of the main theorem}\label{sec_mainproof}

\begin{proof}[Proof of Theorem~\ref{teorema0}]
We assume that $P'$ is not a translation or a reflection of $P$.  The pairs $(P,-P)$ and $(P',-P')$ are synisothetic, by Theorem~\ref{teorema_lmr}. Lemma~\ref{globale_frontiera} applies and proves that there exists a positive or negative face $\fz$ of $P$ such that the vectors associated by Lemma~\ref{om_costante} to each component of $\pa\Sp$ coincide. Let $y$ denote this vector and let $\Sm$ be defined as in Lemma~\ref{sigma_meno}. Up to a translation or a reflection of $P'$,  we may assume $\fz$ positive and $\Pg=P'$. We will prove $g_P\neq g_{P'}$.

In this proof the terms boundary and interior, when applied to $\Sp$ and $\Sm$, refer to the topology induced on $\pa P$ by its immersion in $\Real^3$. 

We claim   that either there exist  $q\in\inte (-\Sm+\tdue)$ vertex of  $-P+ \tdue$ and a plane $\pi$ strictly supporting $-P+ \tdue$ at $q$ such that  $\pi\cap P=\emptyset$, or else there exist  $q\in\inte \Sp$ vertex of $P$ and a plane $\pi$ strictly supporting $P$ at $q$ such that  $\pi\cap(-P+y)=\emptyset$. This follows by \eqref{formula_sigma_meno} and standard convexity arguments.
Indeed,  the set $V_P$ of the vertices of $P$ contained in $\inte \Sp$ differs from the set $V_{-P+y}$ of the vertices of $-P+y$ contained in  $\inte (-\Sm+y)$, because otherwise both $\Sp$ and $-\Sm+y$ are contained in the boundary of $\conv(\pa\Sp\cup V_P)$ and the inequality in \eqref{formula_sigma_meno} is false. If, say, $q\in V_{-P+y}\setminus V_P$ then let $\pi$ be a plane through $q$ strictly supporting $-P+y$. Up to a perturbation of $\pi$, we may assume that either $\pi$ does not intersect $P$ or it intersects  $\inte P$. In the first case we are done. In the second case there is an ``extremal'' vertex of $P$ contained in the open halfspace which is bounded by $\pi$ and does not contain $-P+y$. This vertex has the required properties.

Assume  $q\in\inte (-\Sm+\tdue)$ (in the other case the proof is similar)  and let $\pi$ be as above. Let $\pi_0$ be a plane which is parallel to $\pi$, intersects  $\pa\Sp$ in a point, say $z$ (up to a perturbation of $\pi$ we may always assume that $\pi_0\cap\Sp$ is a point), and such that $\pa\Sp$ and $q$ are on opposite sides of $\pi_0$. If $\pi_0^+$ denotes the closed halfplane bounded by $\pi_0$ and containing $q$, then we have $\pi_0^+\cap\pa P\subset\Sp$ and $\pi_0^+\cap(-\pa P+y)\subset\Sm$. Therefore, by the inclusion in \eqref{formula_sigma_meno}, we have
\begin{equation}\label{equality_piz}
\pi_0^+\cap P=\pi_0^+\cap P'\quad\text{and}\quad \pi_0^+\cap(- P+y)=\pi_0^+\cap(-P'+y).
\end{equation}

The plane $-\pi+y$ strictly supports $P$ and $P'$ at $-q+y$. Up to affine transformations, we may assume $-q+y=O$, $-\pi+y=\{x\in\Real^3 : x_3=0\}$, $P, P'\subset\{x : 0\leq x_3\leq1\}$, and we may also assume that the plane $\{x : x_3=1\}$ supports both $P$ and $P'$ (the existence of a common supporting plane follows by \eqref{equality_piz}). 
In this setting we have $y\cdot(0,0,1)>1$, because $y=q$ and the plane $\pi$, which contains $q$, does not intersect $P$ and $P'$. 
Let $e_3=(0,0,1)$ and let $N_1$ and $N_2$ denote the strips $\{x : z\cdot e_3\leq x_3\leq1\}$ and $\{x : 0\leq x_3\leq (-z+y)\cdot e_3\}$, respectively.
The identities \eqref{equality_piz} are equivalent to
\begin{equation}\label{proprieta_poliedri}
P\cap N_1=P'\cap N_1\quad\text{and}\quad
P\cap N_2=P'\cap N_2.
\end{equation}

We claim that $g_P$ and $g_{P'}$ differ in any neighbourhood of $z$.
Consider $P\cap (P+z+\ee)$ and $P'\cap (P'+z+\ee)$,
for $\ee\in\Real^3$, $\|\ee\|$ small and $\ee\cdot e_3<0$. 
These sets  are contained in the strip $N_1\cup N(\ee)$, where $N(\ee)=\{x : (z+\ee)\cdot e_3\leq x_3\leq z\cdot e_3\}$. Since $N_1\subset N_2+z+\ee$ (because $\ee\cdot e_3<0$ and $y\cdot e_3>1$), \eqref{proprieta_poliedri} implies $P=P'$ and $P+z+\ee=P'+z+\ee$ in $N_1$. In $N(\ee)$  we have $P+z+\ee=P'+z+\ee$, but $P$ and $P'$ differ. To be more precise, let  $A=\cone(P,O)=\cone(P',O)$, $C=\cone(P,z)\cap\set{x : x_3\leq 0}$ and $D=\cone(P',z)\cap\set{x : x_3\leq 0}$. We have 
\begin{equation*}
(P+z+\ee)\cap N(\ee)=(P'+z+\ee)\cap N(\ee)
=(A+z+\ee)\cap N(\ee).
\end{equation*}
We also have
\begin{equation*}
\begin{aligned}
P\cap (P+z+\ee)\cap N(\ee)&=(A+z+\ee)\cap (C+z),\\
P'\cap (P'+z+\ee)\cap N(\ee)&=(A+z+\ee)\cap (D+z),\\
\end{aligned}\end{equation*}
because $O$ is a vertex of $A$ and therefore the intersections in the left hand side of the previous formulas are contained in a neighbourhood of $z$.
Therefore 
\[
g_P(z+\ee)-g_{P'}(z+\ee)=g_{A,C}(\ee)-g_{A,D}(\ee).
\]
If we prove that $C\neq D$ and that $O$ is a vertex of $\conv(C\cup D)$, then Lemma~\ref{coni_finale_shortened} (with $B=\set{O}$) implies  $g_P\neq g_{P'}$.
Let us prove   
\begin{equation}\label{coni_diversi2}
\cone(P,z)\neq\cone(P',z).
\end{equation} 
By definition, $z$ is an endpoint of an edge $S$ of $\pa\Sp$ which intersects $\{x : z\cdot e_3=x_3\}$ only in $z$. 
Let $F$ and $F'$ be respectively the facets of $P$ and $P'$ associated to $S$ by Lemma~\ref{proprieta_frontiere}.
If \eqref{coni_diversi2} is false then either $z$ is a vertex of $P$ and $P'$, or $z$ is in the interior of $F$ and $F'$, or $z$ is in the relative interior of two edges of $P$ and $P'$. The last two possibilitie.s are ruled out by Lemma~\ref{proprieta_frontiere}, which proves that $z$ is a vertex of the polygon $F\cap F'$. The first possibility is ruled out by arguments similar to those used in the proof of Lemma~\ref{relintfz}. These arguments prove that when $z$ is a vertex the identity $\cone(P,z)=\cone(P',z)$ is not compatible with $z\in\pa \Sp$. 
Formula \eqref{coni_diversi2} and the equality  $\cone(P,z)\cap\set{x : x_3\geq 0}=\cone(P',z)\cap\set{x : x_3\geq 0}$ (a consequence of  \eqref{proprieta_poliedri}) imply $C\neq D$. 

It remains to prove that $O$ is a vertex of $\conv(C\cup D)$. 
The first condition in \eqref{proprieta_poliedri} implies that $C\cap \{x : x_3=0\}=D\cap\{x : x_3=0\}$. Therefore $O$ is not a vertex of $\conv(C\cup D)$ only if  $z$ is in the relative interior of a segment $R$ contained in $\pa P$, in $\pa P'$ and in $\{x : z\cdot e_3=x_3\}$. Let $S$, $F$ and $F'$ be as above.  Since $S\subset\pa P\cap\pa P'$ and $S\nsubseteq\{x : z\cdot e_3=x_3\}$,   $T:=\conv(R\cup S)$ is a triangle contained in $\pa P\cap \pa P'$.  The latter contradicts the property that $S$ is an edge of the polygon $F\cap F'$, because $\relint S\subset\relint T\subset F\cap F'$. 
\end{proof}


\section{About dimension $n\geq4$}\label{nmaggiore}
Each convex body has a representation as in \eqref{decomp_somma_diretta} in terms of directly indecomposable bodies $K_i$. Assume that at least two of the summands, say $K_1$ and $K_2$, are not centrally symmetric. In this case $(-K_1)\oplus K_2\oplus\dots\oplus K_s$ has the same covariogram of $K$ and is not a translation or reflection of $K$.  Does each convex body $L$ with $g_L=g_K$ have the same structure?

\begin{theorem}\label{prodotti_cartesiani}
Let $K\subset\Real^n$ be a convex body and let $K=K_1\oplus\dots\oplus K_s$ be its representation in terms of directly indecomposable bodies.
Assume that, for each $i=1,\dots,s$,  $E_i$ is a linear subspace containing $K_i$ and $E_1\oplus \dots\oplus E_s=\Real^n$. 
\begin{enumerate}
\item\label{cl_1} If $L$ is a convex body and $g_K=g_L$ then $L=L_1\oplus\dots\oplus L_s$, where, for each $i$, $L_i$ is a directly indecomposable convex body contained in $E_i$  and $g_{K_i}=g_{L_i}$.

\item\label{cl_2} If in addition $K$ is a polytope and, for each $i$, either $\dim K_i\leq3$, or $K_i$ is centrally symmetric, or $K_i$ is simplicial with $K_i$ and $-K_i$ in general relative position, then $L$ is a translation or reflection of $\si_1 K_1\oplus\dots\oplus\si_s K_s$, for suitable $\si_1,\dots,\si_s\in\set{+1,-1}$. 
\end{enumerate}
\end{theorem}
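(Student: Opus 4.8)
## Proof plan for Theorem~\ref{prodotti_cartesiani}

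The plan is to prove the two parts essentially separately, with part~\eqref{cl_1} as the technical heart and part~\eqref{cl_2} a relatively quick consequence of the dimension-by-dimension positive results already available (Theorem~\ref{teorema0}, the planar case \cite{AB2}, the centrally symmetric case, and \cite{GSW2}) combined with the uniqueness of the direct-sum decomposition.

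For part~\eqref{cl_1}, the strategy is to pass to Fourier transforms. Writing $K=K_1\oplus\dots\oplus K_s$ with $K_i\subset E_i$ and $E_1\oplus\dots\oplus E_s=\Real^n$, one has $1_K=1_{K_1}\otimes\dots\otimes 1_{K_s}$ after choosing coordinates adapted to the $E_i$, hence by \eqref{fourier}
\[
\widehat{g_K}(\xi)=\prod_{i=1}^s|\widehat{1_{K_i}}(\xi_i)|^2,
\]
where $\xi=(\xi_1,\dots,\xi_s)$ is the corresponding splitting of the dual variable. The identity $g_K=g_L$ gives $|\widehat{1_L}(\xi)|^2=\prod_i|\widehat{1_{K_i}}(\xi_i)|^2$ for all $\xi$. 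First I would recover the linear structure: restricting to $\xi_j=0$ for $j\neq i$ shows that the ``slice'' $\xi\mapsto|\widehat{1_L}(0,\dots,\xi_i,\dots,0)|^2$ equals $\voln(L)^2\cdot|\widehat{1_{K_i}}(\xi_i)|^2/\prod_{j\neq i}\voln(K_j)^2$ up to the obvious normalization, so the zero set of $\widehat{1_L}$ along the subspace $E_i^\perp$-complement is controlled. The cleaner route, and the one I would actually pursue, is to use the covariogram directly: the support of $g_L$ equals $DL$ by \eqref{support}, while the support of $g_K$ is $DK=DK_1\oplus\dots\oplus DK_s$, a direct sum; and a convex body whose difference body is a direct sum of the $DK_i$ must itself split compatibly — this is where I would invoke (or reprove) the fact that a convex body is directly indecomposable iff its difference body is, and more precisely that the direct-sum decomposition of $DL$ into indecomposables refines to a decomposition of $L$. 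Knowing $L=L_1\oplus\dots\oplus L_s$ with $L_i\subset E_i$ directly indecomposable, the factorization of $\widehat{g_L}$ as $\prod_i|\widehat{1_{L_i}}(\xi_i)|^2$ then forces, by the analyticity argument already used in Lemma~\ref{lemma_sistema_cov} (Fourier transforms of compactly supported functions are analytic, so a product identity that holds on a product of lines propagates), that $|\widehat{1_{L_i}}|^2=c_i|\widehat{1_{K_i}}|^2$ with $\prod c_i=1$; evaluating at $\xi=0$ and using $\voln$-additivity as in Lemma~\ref{lemma_sistema_cov} gives $c_i=1$, i.e. $g_{L_i}=g_{K_i}$ for each $i$.

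For part~\eqref{cl_2}, once part~\eqref{cl_1} is in hand I have $g_{K_i}=g_{L_i}$ with both $K_i,L_i$ directly indecomposable and $K_i$ a polytope in one of the three listed ``good'' classes. In each case the covariogram determines the body up to translation and reflection: if $\dim K_i\leq 3$ this is Theorem~\ref{teorema0} together with Nagel's planar result and the trivial one-dimensional case (and \cite{AB2} for the full plane); if $K_i$ is centrally symmetric this is classical (the covariogram of a centrally symmetric body is, up to scaling, the body's own ``difference-body'' data and centrally symmetric bodies are determined — in fact $g_{K_i}$ determines $\operatorname{conv}$ of its support $DK_i=2K_i$ up to translation); if $K_i$ is simplicial with $K_i,-K_i$ in general relative position this is \cite{GSW2}. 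Hence $L_i$ is a translate of $\sigma_i K_i$ for some $\sigma_i\in\{+1,-1\}$. Collecting these, $L=L_1\oplus\dots\oplus L_s$ is a translate of $\sigma_1 K_1\oplus\dots\oplus\sigma_s K_s$.

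The main obstacle I anticipate is the first half of part~\eqref{cl_1}: showing that $g_K=g_L$ forces $L$ to split as a direct sum adapted to the \emph{same} subspaces $E_i$. The analytic/Fourier bookkeeping once a splitting is known is routine, but establishing the splitting itself requires combining (i) that $DL=DK$ must follow — which I would get from $\operatorname{supp}g_L=DL$, $\operatorname{supp}g_K=DK$ and $g_K=g_L$ — with (ii) the fact that the unique decomposition of $DK$ into directly indecomposable summands is $DK_1\oplus\dots\oplus DK_s$, and (iii) a lemma to the effect that a direct-sum decomposition of $DL$ lifts to one of $L$ with the summand $L_i$ satisfying $DL_i$ equal to the corresponding summand of $DK$. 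Point (iii) is the delicate one: it amounts to saying that the indecomposable direct summands of a convex body are ``detected'' by its difference body, and I would prove it by analyzing the normal fan — the facet normal directions of $DL$ that lie in a given $E_i^\perp$-type subspace are exactly the facet normals of $L_i$ — using the second identity in \eqref{facce_corpodiff}, $(DL)_w=L_w+(-L)_w$, to transfer face information between $L$ and $DL$. After that, matching the indecomposable summands of $DL=DK$ with those of the two candidate decompositions and invoking uniqueness finishes the reduction.
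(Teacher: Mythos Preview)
Your proposal is correct and follows essentially the same route as the paper: get $DL=DK$ from $\supp g_L=\supp g_K$, use the fact that a convex body is directly indecomposable iff its difference body is (the paper isolates this as Lemma~\ref{irreducibility}) together with uniqueness of the direct-sum decomposition to split $L=L_1\oplus\dots\oplus L_s$ with $L_i\subset E_i$, then use the product formula $g_K(x_1,\dots,x_s)=\prod_i g_{K_i}(x_i)$ to obtain $g_{K_i}=\al_i\,g_{L_i}$ and invoke Lemma~\ref{lemma_sistema_cov} for $\al_i=1$; part~\eqref{cl_2} then follows from the dimensionwise positive results exactly as you say.

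One caveat on your proposed proof of the lifting step~(iii): the normal-fan / facet-normal argument you sketch is polytope language, but Claim~\eqref{cl_1} is stated for arbitrary convex bodies, so that route would not suffice as written. The paper's proof of Lemma~\ref{irreducibility} avoids this by a short support-function computation: if $DK=L\oplus M$ with $L\subset E_1$, $M\subset E_2=E_1^\perp$, one checks $h_{K_1\oplus K_2}+h_{-(K_1\oplus K_2)}=h_K+h_{-K}$ for $K_i=K\mathbin|E_i$, and combines this with the obvious inequality $h_K\le h_{K_1\oplus K_2}$ to force equality, hence $K=K_1\oplus K_2$. This works for all convex bodies and is simpler than tracking faces.
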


\begin{lemma}\label{irreducibility} A convex body $K$ is directly indecomposable if and only if $D\, K$ is directly indecomposable.
\end{lemma}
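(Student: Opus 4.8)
The statement is that $K$ is directly indecomposable if and only if $DK=K+(-K)$ is. The natural strategy is to relate a direct sum decomposition of $K$ to one of $DK$, in both directions, using the uniqueness of the decomposition into directly indecomposable summands (stated in Section~\ref{definitions}) and the behaviour of $DK$ and of support cones under Minkowski sums that respect a direct-sum splitting of the ambient space.

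\textbf{The easy direction.} First I would show that if $K$ is decomposable then $DK$ is decomposable. Suppose $K=L\oplus M$ with $L\subset E$, $M\subset F$, $E\oplus F=\Real^n$, and $\dim L,\dim M\geq 1$. Since the reflection in the origin respects the splitting, $-K=(-L)\oplus(-M)$, hence
\[
DK=K+(-K)=\big(L+(-L)\big)\oplus\big(M+(-M)\big)=DL\oplus DM,
\]
where $DL\subset E$ has $\dim DL=\dim L\geq 1$ and likewise $\dim DM\geq 1$. Thus $DK$ is decomposable. (One must note $DL$ is a genuine convex body in $E$: it has nonempty interior relative to $\aff L=E$ after translating $L$ so that $0\in\inte L$, so $\dim DL=\dim L$.)

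\textbf{The converse.} Now suppose $DK$ is decomposable, say $DK=P\oplus Q$ with $P\subset E$, $Q\subset F$, $E\oplus F=\Real^n$, $\dim P,\dim Q\geq 1$. I want to deduce $K$ decomposes along the same splitting. The cleanest route is via support cones and formula~\eqref{facce_corpodiff}: for every $u\in S^{n-1}$ one has $(DK)_u=K_u+(-K)_u=K_u-K_{-u}$, and a direct-sum decomposition of a polytope (or, by approximation / by the analogous statement for general convex bodies) manifests itself in its faces. Concretely: $DK=P\oplus Q$ means $(DK)_u = P_{u_E} + Q_{u_F}$ where $u_E,u_F$ are the components of $u$ in the dual splitting, i.e.\ $DK$ ``factors'' and every face of $DK$ is a sum of a face of $P$ and a face of $Q$. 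I would translate $K$ so that $0$ is its Steiner point (or merely so $0\in\inte K$), making $DK$ symmetric and forcing, by uniqueness of the summands of $DK$ up to order and translation, that $P$ and $Q$ may be taken origin-symmetric. Then I claim $K=K_E\oplus K_F$ where $K_E:=K\mathbin|E$ (projection along $F$) and $K_F:=K\mathbin|F$. The inclusion $K\subset K_E+K_F$ is automatic; for the reverse one uses that $DK=DK_E\oplus DK_F$ would force, by the cancellation law for Minkowski addition (used already in the proof of \eqref{t_lmr_facce} in Theorem~\ref{teorema_lmr}) and the uniqueness of the direct decomposition, the summands $DK_E, DK_F$ to be translates of $P,Q$; and then a support-function argument — $h_{DK}(x)=h_K(x)+h_K(-x)$ splits as a sum of a function of $x_E$ and a function of $x_F$, whence $h_K$ itself does, up to the odd part, which is linear — gives $h_K(x)=h_{K_E}(x_E)+h_{K_F}(x_F)$, i.e.\ $K=K_E\oplus K_F$. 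Finally $\dim K_E=\dim DK_E/?$... more simply $\dim K_E \geq \tfrac12\dim DK_E$ is false in general, so instead I note $\dim DK_E=\dim K_E$ and $DK_E$ is a nontrivial summand of $DK$, forcing $\dim K_E\geq 1$ and likewise $\dim K_F\geq 1$; hence $K$ is decomposable.

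\textbf{Main obstacle.} The subtle point is the converse: extracting a decomposition of $K$ from one of $DK$. The difficulty is that $D$ is far from injective (e.g.\ $DK=D(-K)$, and more generally $DK$ only sees $K$ up to translation and the ``even part''), so one cannot just invert. The key tool to push through is the uniqueness (up to order and translation) of the representation \eqref{decomp_somma_diretta} into directly indecomposable summands together with the cancellation law for Minkowski sums; applying uniqueness simultaneously to the decompositions of $K$ (if it had one not aligned with $E\oplus F$) and of $DK$ one shows the two splittings of $\Real^n$ must be compatible. I expect the write-up to hinge on carefully stating ``$DK=DK_1\oplus\cdots\oplus DK_s$ whenever $K=K_1\oplus\cdots\oplus K_s$ is the indecomposable decomposition, and conversely each indecomposable summand of $DK$ is a translate of some $DK_i$'' — the forward half is the easy computation above, and the converse half is then forced by a counting/uniqueness argument on the number and dimensions of indecomposable summands.
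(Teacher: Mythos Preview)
Your easy direction is fine and matches the paper. Your setup for the converse is also the paper's: take the splitting $E\oplus F$ coming from $DK=P\oplus Q$, set $K_E=K\mathbin|E$, $K_F=K\mathbin|F$, and try to show $K=K_E\oplus K_F$. You correctly note the automatic inclusion $K\subset K_E\oplus K_F$.

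The gap is in how you close that inclusion to an equality. You write that $h_{DK}(x)=h_K(x)+h_K(-x)$ splits as a function of $x_E$ plus a function of $x_F$, ``whence $h_K$ itself does, up to the odd part, which is linear''. The odd part of $h_K$, namely $\tfrac12(h_K(x)-h_K(-x))$, is \emph{not} linear for a general convex body (try a triangle). So that step fails, and the detours through cancellation and uniqueness of the decomposition of $DK$ do not repair it.

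The paper's fix is a one-line inequality trick that uses the very inclusion you already recorded. After a linear change making $E\perp F$, the splitting of $DK$ gives
\[
h_K(x)+h_K(-x)=h_{DK}(x)=h_{DK}(x_E)+h_{DK}(x_F)=h_{K_E\oplus K_F}(x)+h_{K_E\oplus K_F}(-x).
\]
From $K\subset K_E\oplus K_F$ you have $h_K(y)\le h_{K_E\oplus K_F}(y)$ for \emph{every} $y$; apply this at $y=x$ and at $y=-x$. Two inequalities whose sum is an equality must each be equalities, so $h_K=h_{K_E\oplus K_F}$ and $K=K_E\oplus K_F$. That is the missing glue. The dimension check $\dim K_E,\dim K_F\ge 1$ then follows from $\inte K\neq\emptyset$ and $\dim E,\dim F\ge 1$, as you eventually note.
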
 

\begin{proof}It is evident that if $K$ is not directly indecomposable then $DK$ has this property too. Vice versa, assume that $DK=L\oplus M$, with  $L\subset E_1$, $M\subset E_2$ convex sets of strictly positive dimension, and   $E_1$, $E_2$  linear subspaces with $E_1\oplus E_2=\Real^n$. Up to a linear transformation we may assume $E_1=E_2^\perp$. In this case we have $h_{DK}(x_1,x_2)=h_{DK}(x_1,0)+h_{DK}(0,x_2)$, for each $x_1\in E_1$ and $x_2\in E_2$. Moreover, if $K_1=K\mathbin| E_1$ and $K_2=K\mathbin| E_2$, we also have $h_{K_1}(x_1,0)=h_K(x_1,0)$ and $h_{K_2}(0,x_2)=h_K(0,x_2)$. The linearity of the support function with respect to Minkowski addition implies 
\begin{align*}
h_{K_1}(x_1,0)+h_{-K_1}(x_1,0)&=h_K(x_1,0)+h_{-K}(x_1,0)\\
&=h_{DK}(x_1,0)
\end{align*}
and  $h_{K_2}(0,x_2)+h_{-K_2}(0,x_2)=h_{DK}(0,x_2)$. 
These equalities imply
\begin{align*}
h_{K_1\oplus K_2}(x_1,x_2)+h_{-(K_1\oplus K_2)}(x_1,x_2)&=
h_{{DK}}(x_1,x_2)\\
&=h_{K}(x_1,x_2)+h_{-K}(x_1,x_2),
\end{align*}
which  can be rewritten as 
\begin{equation}\label{funz_supporto}
h_{K_1\oplus K_2}(x_1,x_2)+h_{K_1\oplus K_2}(-x_1,-x_2)
=h_{K}(x_1,x_2)+h_{K}(-x_1,-x_2).
\end{equation}
The inclusion $K\subset K_1\oplus K_2$, which is obvious, implies $h_K\leq h_{K_1\oplus K_2}$, and the latter inequality, together with \eqref{funz_supporto}, implies $h_K= h_{K_1\oplus K_2}$. Therefore $K=K_1\oplus K_2$ is not indecomposable too.
\end{proof}

\begin{proof}[Proof of Theorem~\ref{prodotti_cartesiani}] Claim~\eqref{cl_1}. The identity $D\,K=D\,L$, Lemma~\ref{irreducibility} and the uniqueness of the decomposition in directly indecomposable summands imply $L=L_1\oplus\dots\oplus L_s$, with $L_i\subset E_i$. The identity 
\[
\Pi_{i=1}^s g_{K_i}(x_i)=g_K(x_1,\dots,x_s)=g_{L}(x_1,\dots,x_s)=\Pi_{i=1}^s g_{L_i}(x_i),
\] 
valid for each $x=(x_1,\dots,x_s)\in E_1\oplus\dots\oplus E_s$, implies $g_{K_i}=\al_i g_{L_i}$, for each index $i$ and for suitable constants $\al_i$, $\al_i\neq0$. Lemma~\ref{lemma_sistema_cov} implies $\al_i=1$, for each $i$. 

Claim~\eqref{cl_2} follows from Claim~\eqref{cl_1}, Theorem~\ref{teorema0} and the positive results for the covariogram problem available in the literature and mentioned in the introduction.
\end{proof}

\begin{remark}\label{rem_facce_four} Let $P$ and $P'$  be convex polytopes in $\Real^4$ with non-empty interior and with equal covariogram, let $w\in S^3$ and assume that $P_w$ and $P_{-w}$ are facets.  Proposition~\ref{rufibach}, Lemma~\ref{lemma_sistema_cov} and Theorem~\ref{teorema0} imply that, possibly after a reflection or a translation of $P'$, we have $P'_w=\pm P_w$ and $P'_{-w}=\pm P_{-w}$. Contrary to the three-dimensional case (see also Remark~\ref{rem_facce}), the ambiguity due to the $\pm$ sign cannot be eliminated.  Indeed, if $K=\conv\{(0,-2),(0,2),(1,1),(1,-1)\}$, $L$ is a triangle,  $P=K\times L$, $P'=K\times(-L)$ and $w=(-1,0,0,0)$ then $P'_w= -P_w$ but there is no translation or reflection of $P'$ such that $P'_w= P_w$. 
\end{remark}

\bibliographystyle{amsplain}

\end{document}